\definecolor{fg}{rgb}{0.0,0.5,0.0}
\definecolor{bb}{rgb}{0.54, 0.81, 0.94}
\definecolor{gsa}{rgb}{0.66, 0.89, 0.63}
\newcommand{\hF}{\widehat{F}}
\newcommand{\cS}{\mathcal{S}}
\newcommand{\cT}{\mathcal{T}}
\newcommand{\cE}{\mathcal{E}}
\newcommand{\cM}{\mathcal{M}}
\newcommand{\Ome}{\Omega}
\newcommand{\oOme}{\overline{\Omega}}
\newcommand{\NJ}{\mathbb{N}_J}
\newcommand{\bn}{\mathbf{n}}
\newcommand{\del}{\delta}
\newcommand{\ou}{\overline{u}}
\newcommand{\uu}{\underline{u}}
\newcommand{\p}{\partial}
\newcommand{\bh}{\mathbf{h}}
\newcommand{\bx}{\mathbf{x}}
\newcommand{\by}{\mathbf{y}}
\newcommand{\bfv}{\mathbf{v}}
\newcommand{\bz}{\mathbf{z}}
\newcommand{\bk}{\mathbf{k}}
\newcommand{\bW}{\mathbf{W}}
\newcommand{\poly}{\mathbb{P}}
\newtheorem{remark}{Remark}[section]
\newtheorem{theorem}{Theorem}[section]
\newtheorem{definition}{Definition}[section]
\newtheorem{lemma}{Lemma}[section]
\begin{document}

\title[A narrow-stencil numerical framework]{A narrow-stencil framework for convergent numerical approximations of fully nonlinear second order PDEs}

\author[X. Feng]{Xiaobing Feng}  
\address{Department of Mathematics, The University of Tennessee, Knoxville, TN 37996}
\email{xfeng@utk.edu}
\thanks{The work of the first author was partially supported by the NSF grants DMS-1620168 and DMS-2012414. The work of the second and third  authors was partially supported by the NSF grant DMS-2111059.}

\author[T. Lewis]{Thomas Lewis}  
\address{Department of Mathematics and Statistics, The University of North Carolina at Greensboro, Greensboro, NC 27412}
\email{tllewis3@uncg.edu}

\author[K. Ward]{Kellie Ward}  
\address{Department of Mathematics and Statistics, The University of North Carolina at Greensboro, Greensboro, NC 27412}
\email{kmward7@uncg.edu}

\date{\today}

\subjclass{65N06, 65N12}

\keywords{Fully nonlinear PDEs, viscosity solutions, Hamilton-Jacobi-Bellman and Monge-Amp\`ere equations, narrow-stencil, generalized monotonicity (or g-monotonicity), numerical operators, numerical moment.}


\begin{abstract}
This paper develops a unified general framework for designing convergent 
finite difference and discontinuous Galerkin methods 
for approximating viscosity and regular solutions of fully nonlinear second order PDEs. 
Unlike the well-known monotone (finite difference) framework, 
the proposed new framework allows for the use of narrow stencils and unstructured grids 
which makes it possible to construct high order methods. 
The general framework is based on the concepts of consistency and g-monotonicity 
which are both defined in terms of various numerical derivative operators.  
Specific methods that satisfy the framework are constructed using numerical moments. 
Admissibility, stability, and convergence properties are proved,  
and numerical experiments are provided along with some computer implementation details.  
\end{abstract}

\maketitle


\section{Introduction} \label{intro_sec}
This paper develops a unified general framework for designing convergent narrow-stencil 
finite difference (FD) and discontinuous Galerkin (DG) methods for approximating 
the viscosity (and regular) solution to the following fully nonlinear second order 
Dirichlet boundary value problem:
\begin{subequations}\label{bvp}
\begin{alignat}{2}
F[u](\bx)\equiv F \left( D^2 u, \nabla u, u, \bx \right) 
	& = 0, &&\qquad \forall \bx \in \Omega,
	\label{FD_pde} \\
u(\bx) & = g(\bx), && \qquad \forall \bx \in \partial \Omega,  \label{FD_bc} 
\end{alignat}
\end{subequations}
where $\Omega \subset \mathbb{R}^d$ ($d=2,3$) is a bounded domain and $D^2u(\bx)$ 
denotes the Hessian matrix of $u$ at $\bx$. 
The partial differential equation (PDE) operator 
$F: \cS^{d\times d}\times \mathbb{R}^d\times\mathbb{R} \times \oOme \to \mathbb{R}$, 
where $\cS^{d\times d} \subset \mathbb{R}^{d \times d}$ denotes the set
of $d\times d$ symmetric real matrices, 
is a fully nonlinear second order differential operator in the sense that $F$ is nonlinear 
in at least one component of the Hessian $D^2 u$.  
The boundary data $g$ is assumed to be continuous with $F$ Lipschitz with respect to its 
first three arguments. 
Moreover, $F$ is assumed to be {\em uniformly and proper elliptic} and satisfy a comparison principle 
(see Section~\ref{PDEtheory_sec} for the definitions).  
In this paper we focus 
our attention on two main classes of fully nonlinear second order PDEs, namely, the 
Monge-Amp\`ere-type and Hamilton-Jacobi-Bellman-type equations 
(cf. \cite{Gilbarg_Trudinger01,Feng_Glowinski_Neilan10}). 

Fully nonlinear second order PDEs arise from many scientific and engineering applications 
such as antenna design, astrophysics, economics, differential geometry, stochastic optimal control, 
and optimal mass transport; yet, they are a class of PDEs which are
difficult to study analytically and even more challenging to approximate numerically.  
Due to the fully nonlinear structure, there is no general variational (or weak) formulation.  
As a result, its weak solution concept 
(called {\em viscosity solutions}, see Section~\ref{PDEtheory_sec} for the definition) 
is complicated and, in particular, very difficult to address numerically. 
Nevertheless, driven by the need for solving many emerging and intriguing application problems, 
numerical fully nonlinear PDEs has garnered a lot attention and experienced rapid developments 
in recent years.   
See \cite{Feng_Glowinski_Neilan10,NSZ16} and the references therein
for an overview of various numerical methods that have been proposed, analyzed, and tested. 

To the best of our knowledge, there are only two main approaches in the literature which aim to approximate viscosity solutions.  
Both approaches have been used successfully to 
design and analyze (practical) numerical methods for approximating second order fully nonlinear PDEs. 
The first approach, which was adopted in the overwhelming majority of the existing works, 
is the {\em Barles-Souganidis'  monotone (wide-stencil) finite difference framework} 
(cf. \cite{Barles_Souganidis91,Oberman08,Debrabant_Jakobsen13,Feng_Jensen,Jensen_Smears,NNZ17}).  
We call the second approach the {\em numerical moment-enhanced  g-monotone (narrow-stencil) finite difference
and DG framework} (cf. \cite{FengKaoLewis13,FengLewis18,FengLewis21,Lewis_dissertation} 
and also the original vanishing moment method \cite{Feng_Neilan08}). 
It is well-known that monotone (in the sense of Barles-Souganidis \cite{Barles_Souganidis91}) methods are difficult to construct; 
moreover, they are intrinsically low order, require the use of wide stencils, and yield strongly coupled nonlinear algebraic problems that need to be solved. 
The narrow-stencil approach aims to sidestep these limitations of the wide-stencil approach 
so high order methods can be constructed on both structured and non-structured grids.  
On the other hand, since the narrow-stencil approach abandons the standard 
monotonicity requirement, it prevents one to directly use the powerful Barles-Souganidis'  framework 
for the convergence analysis. 
Consequently, new machineries and techniques must be developed for the analysis of the proposed 
narrow-stencil methods which, so far, has only been done on a case-by-case basis in \cite{FengKaoLewis13,FengLewis18,FengLewis21,Lewis_dissertation}. 
The primary goal of this paper is to re-examine (with a top-down view) 
and refine the {\em numerical moment-enhanced  g-monotone  (narrow-stencil) finite difference and DG approach}, 
which was initiated by us in \cite{FengKaoLewis13,FengLewis18,FengLewis21,Lewis_dissertation}, 
and to formulate it into a unified framework which is parallel to the Barles-Souganidis' framework.  
A far-reaching goal is to provide a blueprint/framework for designing and analyzing practical 
convergent numerical methods for approximating viscosity (and regular) solutions of fully nonlinear second order PDEs.

The remainder of this paper is organized as follows.  
In Section \ref{PDEtheory_sec}, we first recall the basics of viscosity solution theory and elliptic operators as well as the necessary notations. 
We then introduce various FD and DG finite element 
numerical derivative operators (cf. \cite{FengLewis21,DG_calc}) and unify the notations. 
Those discrete derivative operators are the building blocks for our narrow-stencil framework. 
In Section~\ref{framework_sec}, we formulate our abstract framework and introduce the key concepts of 
{\em numerical operators, consistency, g-monotonicity,} and {\em numerical moments}. 
We then state a few structure conditions/assumptions for numerical operators. 
It should be noted that all of these concepts and conditions are motivated by and abstractions of similar ones in our earlier works 
\cite{FengKaoLewis13,FengLewis18,FengLewis21,Lewis_dissertation}. 
The {\em numerical moment} will play a critical role in the 
specific examples of numerical operators that are presented.  
In Sections \ref{convergence_sec}--\ref{stability_sec}, we present a complete 
convergence, admissibility, and stability analysis for the narrow-stencil FD 
methods proposed in Section \ref{framework_sec}. 
Unlike the Barles-Souganidis'  monotone  (wide-stencil) framework where admissibility 
and the $\ell^\infty$-norm stability of the underlying numerical methods are almost free to 
obtain (thanks to the monotonicity), our results require entirely new techniques 
and their proofs given in Sections \ref{convergence_sec}--\ref{stability_sec} are more  technical and involved (as well as much longer).   
These technical issues are precisely the price to pay for using narrow stencils. 
Assuming the admissibility and $\ell^\infty$ stability, we first establish the convergence of the numerical solution to the viscosity solution of the underlying PDE problem in Section \ref{convergence_sec}. The proof is adapted from the much more detailed version in \cite{FengLewis21}. 
We then prove the desired admissibility and $\ell^\infty$ stability in Sections \ref{admissible_sec} and \ref{stability_sec}. 
Our main idea of proving the admissibility is to use the Contractive Mapping Theorem in $\ell^2$ instead of $\ell^\infty$.
The $\ell^\infty$ stability is obtained by a novel numerical embedding technique first introduced in \cite{FengLewis21}. 
Finally, in Section \ref{experiments_sec}, we present some numerical experiments to 
demonstrate the effectiveness of the proposed framework and to address some computer implementation issues.


\section{Preliminaries and numerical derivatives} \label{PDEtheory_sec}

\subsection{Notation and definitions}
The narrow-stencil framework will rely upon two different partial orderings for matrices.  
We will utilize the convention that 
$A \geq B$ if $A - B$ is symmetric nonnegative definite for symmetric matrices $A$ and $B$.  
We will also introduce the alternative convention that  
$A \succeq B$ if each component of $A-B$ is nonnegative.  
Note that the partial ordering induced by $\succeq$ does not require symmetric matrices.  
We also let $A:B$ denote the Frobenius inner product with $A:B \equiv \sum_{i=1}^d \sum_{j=1}^d a_{ij} b_{ij}$ 
for all matrices $A, B \in \mathbb{R}^{d \times d}$. 

For a bounded open domain $\Ome\subset\mathbb{R}^d$, let $B(\Ome)$, 
$USC(\Ome)$, and $LSC(\Ome)$ denote, respectively, the spaces of bounded,
upper semi-continuous, and lower semicontinuous functions on $\Ome$.
For any $v\in B(\Ome)$, we define
\[
v^*(\bx):=\limsup_{\by \to \bx} v(\by) \qquad\mbox{and}\qquad
v_*(\bx):=\liminf_{\by\to \bx} v(\by). 
\]
Then, $v^*\in USC(\Ome)$ and $v_*\in LSC(\Ome)$, and they are called
{\em the upper and lower semicontinuous envelopes} of $v$, respectively.

Below we use the convention of writing the boundary condition as a
discontinuity of the PDE (cf. \cite[p.274]{Barles_Souganidis91}).
The Dirichlet boundary condition is assumed to hold in the viscosity sense.  
The following two definitions can be found in \cite{Gilbarg_Trudinger01,
Caffarelli_Cabre95,Barles_Souganidis91}.

\begin{definition}\label{proper_elliptic_def}
Equation \eqref{bvp} is said to be {\em proper elliptic} if for  all $(\mathbf{q}, \bx) \in\mathbb{R}^d \times \oOme$, there holds
\begin{align*}
F(A,\mathbf{q}, v, \bx) \leq F(B,\mathbf{q}, w, \bx) \qquad\forall 
A,B\in \cS^{d\times d},\, A\geq B, \;
v,w \in \mathbb{R},\, v \leq w . 
\end{align*}
\end{definition}

\begin{definition}\label{uniformly_elliptic_def}
Equation \eqref{bvp} is said to be {\em uniformly elliptic} if 
there exists $\Lambda \geq \lambda > 0$ such that, 
for all $(\mathbf{q},v, \bx) \in\mathbb{R}^d\times \mathbb{R}\times \oOme$, there holds
\begin{align*}
	0 \geq - \lambda\, \mbox{\rm tr}(A-B)  
	\geq F(A,\mathbf{q},v,\bx) - F(B,\mathbf{q},v,\bx) 
	\geq - \Lambda\, \mbox{\rm tr}(A-B)  
\end{align*}
for all $A,B\in \cS^{d\times d}$ with $A \geq B$.  
\end{definition}

We note that when $F(A,\mathbf{q},v,\bx)$ is differentiable with respect to the first parameter, 
then the proper ellipticity definition is equivalent to 
requiring that the matrix $\frac{\partial F}{\partial A}$
is negative semi-definite 
and the value $\frac{\partial F}{\partial v}$ is nonnegative 
(cf. \cite[p. 441]{Gilbarg_Trudinger01}).
If $F$ is also uniformly elliptic, then there holds 
$0 > - \lambda |\vec{\xi}|^2 \geq \vec{\xi} \cdot \frac{\partial F}{\partial A} \vec{\xi} \geq - \Lambda | \vec{\xi} |^2$ 
for all $\vec{\xi} \neq \vec{0}$.  
Thus, $\lambda I \leq -\frac{\partial F}{\partial A} \leq \Lambda I$.  

\begin{definition}\label{viscosity_soln_def}
A function $u\in B(\Ome)$ is called a viscosity subsolution (resp.
supersolution) of \eqref{bvp} if, for all $\varphi\in C^2(\oOme)$,
if $u^*-\varphi$ (resp. $u_*-\varphi$) has a local maximum
(resp. minimum) at $\bx_0\in \oOme$, then we have
\[
F_*(D^2\varphi(\bx_0),\nabla \varphi(\bx_0),u^*(\bx_0), \bx_0) \leq 0 
\]
(resp. $F^*(D^2\varphi(\bx_0),\nabla \varphi(\bx_0), u_*(\bx_0), \bx_0) \geq 0$).
The function $u$ is said to be a viscosity solution of \eqref{bvp}
if it is simultaneously a viscosity subsolution and a viscosity
supersolution of \eqref{bvp}.
\end{definition}

\begin{definition}\label{comparison_def}
Problem \eqref{bvp} is said to satisfy a {\em comparison 
principle} if the following statement holds. For any upper semicontinuous 
function $u$ and lower semicontinuous function $v$ on $\overline{\Omega}$,  
if $u$ is a viscosity subsolution and $v$ is a viscosity supersolution 
of \eqref{bvp}, then $u\leq v$ on $\overline{\Omega}$.
\end{definition}

Since we assume that $F$ in \eqref{bvp} satisfies the comparison principle, 
we have that the underlying viscosity solution $u$ must be continuous.  
Furthermore, if $F$ is continuous with respect to $\bx$, then by the Lipschitz continuity 
with respect to $D^2 u$ and $u$, we can drop the upper and lower $*$ indices in 
Definition~\ref{viscosity_soln_def}.  


\subsection{Finite difference derivative operators} \label{differences_sec}

We introduce several difference operators for approximating first and second order 
partial derivatives. 
The narrow-stencil framework will use multiple difference operators to help resolve 
the underlying viscosity solution.  
The notation and difference operators used are the same as those in \cite{FengLewis21}.  
The section ends with a formal result for comparing various discrete second order operators.


\subsubsection{\bf Finite difference grids} \label{mesh_sec}

Assume $\Omega$ is a $d$-rectangle, i.e., 
$\Omega = \left( a_1 , b_1 \right) \times \left( a_2 , b_2 \right) \times \cdots \times 
		\left( a_d , b_d \right)$.    
We shall only consider grids that are uniform in each coordinate $x_i$, $i = 1, 2, \ldots, d$.  
Let $J_i (\geq 2)$ be an integer and $h_i = \frac{b_i-a_i}{J_i-1}$ for $i = 1, 2, \ldots, d$. 
Define $\mathbf{h} = \left( h_1, h_2, \ldots, h_d \right) \in \mathbb{R}^d$, 
$h = \max_{i=1,2,\ldots,d} h_i$,  $J = \prod_{i=1}^d J_i$, and   
$\NJ = \{ \alpha = (\alpha_1, \alpha_2, \ldots, \alpha_d) 
\mid 1 \leq \alpha_i \leq J_i, i = 1, 2, \ldots, d \}$.  Then, $\left| \NJ \right| = J$. 
We partition $\Omega$ into $\prod_{i=1}^d \left(J_i-1 \right)$ sub-$d$-rectangles with grid points
$\mathbf{x}_{\alpha} = \bigl (a_1+ (\alpha_1-1)h_1 , a_2 + (\alpha_2-1) h_2 , \ldots , 
		a_d + (\alpha_d-1) h_d \bigr)$
for each multi-index $\alpha \in \NJ$.
We call $\cT_{\mathbf{h}}=\{\mathbf{x}_{\alpha} \}_{\alpha \in \NJ}$ 
a mesh (set of nodes) for $\overline{\Omega}$. 
We also introduce an extended mesh 
$\cT_{\mathbf{h}}'$ which extends  
$\cT_{\mathbf{h}}$ by a collection of ghost grid points that are at most one layer exterior to 
$\overline{\Omega}$ in each coordinate direction. 
In particular, we choose ghost grid points $\bx$ such that 
$\bx = \by \pm 2 h_i \mathbf{e}_i$ for some $\by \in \cT_{\bh} \cap \Omega$, $i \in \{1,2,\ldots,d\}$.  
We set $J_i' = J_i+2$ and $\NJ'$ is defined by replacing $J_i$ by $J_i'$ in 
the definition of $\NJ$ and then removing the extra multi-indices that would correspond to ghost grid points 
that are not in the set $\cT_{\bh}'$ to ensure $| \NJ' | = | \cT_{\bh}' |$.


\subsubsection{\bf First order finite difference operators} \label{discrete_gradients_sec}
 
Let $\left\{ \mathbf{e}_i \right\}_{i=1}^d$ denote the canonical basis vectors for $\mathbb{R}^d$. 
Define the (first order) forward and backward difference operators by
\begin{equation} \label{fd_x}
	\del_{x_i,h_i}^+ v(\mathbf{x})\equiv \frac{v(\mathbf{x} + h_i \mathbf{e}_i) - v(\mathbf{x})}{h_i},\qquad
	\del_{x_i,h_i}^- v(\mathbf{x})\equiv \frac{v(\mathbf{x})- v(\mathbf{x}-h_i \mathbf{e}_i)}{h_i}
\end{equation}
for a function $v$ defined on $\mathbb{R}^d$.  
We also consider the central difference operator $\overline{\delta}_{x_i, h_i}$ defined by 
$\overline{\delta}_{x_i, h_i} \equiv \frac12 \delta_{x_i, h_i}^+ + \frac12 \delta_{x_i, h_i}^-$ so that 
\[
	\overline{\del}_{x_i,h_i} v(\mathbf{x})\equiv \frac{v(\mathbf{x} + h_i \mathbf{e}_i) - v(\mathbf{x} - h_i \mathbf{e}_i)}{2h_i} . 
\]
The corresponding forward, backward, and central discrete gradient operators are denoted by 
$\nabla_{\bh}^+$, $\nabla_{\bh}^-$, and $\overline{\nabla}_{\bh}$, respectively.  


\subsubsection{\bf Second order finite difference operators} \label{discrete_Hessian_sec}

Using the forward and backward 
difference operators introduced in the previous subsection, we have the following
four possible approximations of the second order differential operator
$\partial^2_{x_ix_j}$ given by 
$D_{\mathbf{h},ij}^{\mu\nu}\equiv \delta_{x_j, h_j}^\nu \delta_{x_i, h_i}^\mu$ 
for $\mu,\nu\in \{+,-\}$, 
which in turn leads to the definition of the following four approximations
of the Hessian operator $D^2\equiv [\partial^2_{x_ix_j}]$: 
\begin{equation*}
D_{\mathbf{h}}^{\mu\nu} \equiv \bigl[D_{\mathbf{h},ij}^{\mu\nu}\bigr]_{i,j=1}^d  \qquad
\mbox{for } \mu,\nu\in \{+,-\}.
\end{equation*}

To analyze our narrow-stencil framework in the next section, we also need to introduce the
following two sets of averaged second order difference operators:
\begin{subequations}\label{FD_partials}
\begin{align}
	\widehat{\delta}_{x_i, x_j; h_i, h_j}^2 
	& \equiv \frac12 \bigl( D_{\mathbf{h},ij}^{+-} + D_{\mathbf{h},ij}^{-+} \bigr)
          =\frac12 \bigl(\delta_{x_j, h_j}^- \delta_{x_i, h_i}^+  
		+ \delta_{x_j, h_j}^+ \delta_{x_i, h_i}^- \bigr) , \\ 
	\widetilde{\delta}_{x_i, x_j; h_i, h_j}^2 
	& \equiv \frac12 \bigl( D_{\mathbf{h},ij}^{--} + D_{\mathbf{h},ij}^{++} \bigr)
           = \frac12 \bigl(\delta_{x_j, h_j}^+ \delta_{x_i, h_i}^+  
		+ \delta_{x_j, h_j}^-\delta_{x_i, h_i}^-  \bigr) ,  
\end{align}
\end{subequations}
for all $i,j = 1,2,\ldots, d$.   
Note that, while the various components of $D_{\bh}^{\mu \nu}$ may not be self-adjoint operators, 
the discrete second order partial derivative operators 
$\widehat{\delta}_{x_i, x_j; h_i, h_j}^2$ and $\widetilde{\delta}_{x_i, x_j; h_i, h_j}^2$ 
defined by \eqref{FD_partials} are self-adjoint 
as can be verified by the component forms in \cite{FengLewis21}.  

Using the above difference operators, we define the following two ``centered" approximations 
of the Hessian operator $D^2\equiv [\partial^2_{x_ix_j}]$:
\begin{equation} \label{fd_hessc}
\widehat{D}_{\mathbf{h}}^2\equiv \bigl[\widehat{\delta}_{x_i, x_j; h_i, h_j}^2 \bigr]_{i,j=1}^d,\qquad
\widetilde{D}_{\mathbf{h}}^2 \equiv \bigl[\widetilde{\delta}_{x_i, x_j; h_i, h_j}^2 \bigr]_{i,j=1}^d . 
\end{equation}
We will also consider the 
average central approximation of the Hessian operator $\overline{D}_{\bh}^2$ defined by 
\begin{equation}
	\overline{D}_{\bh}^2 \equiv \frac12 \widehat{D}_{\bh}^2 + \frac12 \widetilde{D}_{\bh}^2 . 
\end{equation}
A visual representation of the local stencils for the various discrete Hessians 
$\widetilde{D}_{\bh}^2$, $\widehat{D}_{\bh}^2$, and $\overline{D}_{\bh}^2$ 
can be found in Figure \ref{fig:steps}. 

For notation brevity, we set
$\delta_{x_i,h_i}^2\equiv\widehat{\delta}_{x_i,h_i}^2\equiv\widehat{\delta}_{x_i, x_i; h_i, h_i}^2$ 
and $\overline{\delta}_{x_i, h_i}^2 \equiv \overline{\delta}_{x_i , x_j ; h_i , h_j}^2$.  
Then, there holds 
\begin{align*}
	\delta_{x_i,h_i}^2 v(\mathbf{x}) 
	& = \frac{v(\mathbf{x}-h_i \mathbf{e}_i) - 2 v(\mathbf{x}) + v(\mathbf{x}+h_i \mathbf{e}_i)}{h_i^2}  
\end{align*}
and 
\[
	\overline{\delta}_{x_i, h_i}^2 = \delta_{x_i, h_i} \delta_{x_i , h_i} = \delta_{x_i, 2 h_i}^2 
\]
for all $i=1,2,\ldots,d$.  
Lastly, we denote the discrete Laplacian operator by 
$\Delta_{\bh} \equiv \sum_{i=1}^d \delta_{x_i, h_i}^2$.  

\begin{figure}[h]
	\centering 
	\begin{tikzpicture}[scale=0.6] 
    \filldraw[draw=black,fill=bb] (0.75,-0.25) rectangle (1.25,0.25); 
    \filldraw[draw=black,fill=bb] (1.75,-0.25) rectangle (2.25,0.25);
    \filldraw[draw=black,fill=bb] (0.75,-1.25) rectangle (1.25,-0.75);
    \filldraw[draw=black,fill=bb] (1.75,-1.25) rectangle (2.25,-0.75); 
    \filldraw[draw=black,fill=bb] (2.75,-1.25) rectangle (3.25,-0.75); 
    \filldraw[draw=black,fill=bb] (1.75,-2.25) rectangle (2.25,-1.75); 
    \filldraw[draw=black,fill=bb] (2.75,-2.25) rectangle (3.25,-1.75);

    \filldraw[draw=black,fill=bb] (7.75,-0.25) rectangle (8.25,0.25); 
    \filldraw[draw=black,fill=bb] (8.75,-0.25) rectangle (9.25,0.25); 
    \filldraw[draw=black,fill=bb] (6.75,-1.25) rectangle (7.25,-0.75); 
    \filldraw[draw=black,fill=bb] (7.75,-1.25) rectangle (8.25,-0.75); 
    \filldraw[draw=black,fill=bb] (8.75,-1.25) rectangle (9.25,-0.75); 	
    \filldraw[draw=black,fill=bb] (6.75,-2.25) rectangle (7.25,-1.75); 
    \filldraw[draw=black,fill=bb] (7.75,-2.25) rectangle (8.25,-1.75);

    \filldraw[draw=black,fill=bb] (12.75,-0.25) rectangle (13.25,0.25); 
    \filldraw[draw=black,fill=bb] (14.75,-0.25) rectangle (15.25,0.25); 
    \filldraw[draw=black,fill=bb] (12.75,-2.25) rectangle (13.25,-1.75); 
    \filldraw[draw=black,fill=bb] (14.75,-2.25) rectangle (15.25,-1.75); 
    
		\filldraw[draw=black,fill=pink] (1.650,0.00) -- (2.00,0.350) -- (2.350,0.00) -- (2.00,-0.350) -- cycle; 

		\filldraw[draw=black,fill=pink] (1.650,-1.00) -- (2.00,-.650) -- (2.350,-1.00) -- (2.00,-1.350) -- cycle; 

		\filldraw[draw=black,fill=pink] (1.650,-2.00) -- (2.00,-2.350) -- (2.350,-2.00) -- (2.00,-1.65) -- cycle;

		\filldraw[draw=black,fill=pink] (7.65,1.00) -- (8.00,1.35) -- (8.35,1.00) -- (8.00,.65) -- cycle; 

		\filldraw[draw=black,fill=pink] (7.65,0.00) -- (8.00,0.35) -- (8.35,0.00) -- (8.00,-0.35) -- cycle; 
		
		\filldraw[draw=black,fill=pink] (7.65,-1.00) -- (8.00,-0.65) -- (8.35,-1.00) -- (8.00,-1.35) -- cycle; 

		\filldraw[draw=black,fill=pink] (7.65,-2.00) -- (8.00,-1.65) -- (8.35,-2.00) -- (8.00,-2.35) -- cycle; 

		\filldraw[draw=black,fill=pink] (7.65,-3.00) -- (8.00,-2.65) -- (8.35,-3.00) -- (8.00,-3.35) -- cycle;

		\filldraw[draw=black,fill=pink] (13.65,1.00) -- (14.00,1.35) -- (14.35,1.00) -- (14.00,0.65) -- cycle; 

		\filldraw[draw=black,fill=pink] (13.65,-1.00) -- (14.00,-0.65) -- (14.35,-1.00) -- (14.00,-1.35) -- cycle; 

		\filldraw[draw=black,fill=pink] (13.65,-3.00) -- (14.00,-2.65) -- (14.35,-3.00) -- (14.00,-3.35) -- cycle; 

    \filldraw[draw=black,fill=gsa] (1.00,-1) circle (7pt); 
	\filldraw[draw=black,fill=gsa] (2.00,-1) circle (7pt); 
	\filldraw[draw=black,fill=gsa] (3.00,-1) circle (7pt);

	\filldraw[draw=black,fill=gsa] (6.00,-1) circle (7pt); 
	\filldraw[draw=black,fill=gsa] (7.00,-1) circle (7pt); 
	\filldraw[draw=black,fill=gsa] (8.00,-1) circle (7pt); 
	\filldraw[draw=black,fill=gsa] (9.00,-1) circle (7pt); 
	\filldraw[draw=black,fill=gsa] (10.00,-1) circle (7pt);

	\filldraw[draw=black,fill=gsa] (12.00,-1) circle (7pt); 
	\filldraw[draw=black,fill=gsa] (14.00,-1) circle (7pt); 
	\filldraw[draw=black,fill=gsa] (16.00,-1) circle (7pt);

	\fill (2,1) circle (3pt); 
	\fill (1,0) circle (3pt); 
	\fill (2,0) circle (3pt); 
	\fill (3,0) circle (3pt); 
	\fill (0,-1) circle (3pt); 
	\fill (1,-1) circle (3pt); 
	\fill (2,-1) circle (3pt); 
	\fill (3,-1) circle (3pt); 
	\fill (4,-1) circle (3pt); 
	\fill (1,-2) circle (3pt); 
	\fill (2,-2) circle (3pt); 
	\fill (3,-2) circle (3pt); 
	\fill (2,-3) circle (3pt); \node[below] at (2,-3.6){\scriptsize $ \widehat{D}_{\bh}^2 $};

	\fill (8,1) circle (3pt); 
	\fill (7,0) circle (3pt); 
	\fill (8,0) circle (3pt); 
	\fill (9,0) circle (3pt); 
	\fill (6,-1) circle (3pt); 
	\fill (7,-1) circle (3pt); 
	\fill (8,-1) circle (3pt); 
	\fill (9,-1) circle (3pt); 
	\fill (10,-1) circle (3pt); 
	\fill (7,-2) circle (3pt); 
	\fill (8,-2) circle (3pt); 
	\fill (9,-2) circle (3pt); 
	\fill (8,-3) circle (3pt); \node[below] at (8,-3.6){\scriptsize $ \widetilde{D}_{\bh}^2 $};

	\fill (14,1) circle (3pt); 
	\fill (13,0) circle (3pt); 
	\fill (14,0) circle (3pt); 
	\fill (15,0) circle (3pt); 
	\fill (12,-1) circle (3pt); 
	\fill (13,-1) circle (3pt); 
	\fill (14,-1) circle (3pt); 
	\fill (15,-1) circle (3pt); 
	\fill (16,-1) circle (3pt); 
	\fill (13,-2) circle (3pt); 
	\fill (14,-2) circle (3pt); 
	\fill (15,-2) circle (3pt); 
	\fill (14,-3) circle (3pt); \node[below] at (14,-3.6){\scriptsize $ \overline{D}_{\bh}^2 $};

	
	\filldraw[draw=black,fill=gsa] (17.85,0) circle (8pt);
    \node[right] at (18.3,0){{$\, u_{xx} \,$}};
    
	\filldraw[draw=black,fill=pink] (17.5,-1) -- (17.85,-0.65) -- (18.2,-1) -- (17.85,-1.35) -- cycle;  
	\node[right] at (18.2,-1){{$\,\ u_{yy} \,$}};
		
	\filldraw[draw=black,fill=bb] (18.15,-1.75) rectangle (17.6,-2.25); 
	\node[right] at (18.25,-2){{$\,\ u_{xy} \,$}};
	
	\end{tikzpicture} 
	\caption{Illustration of the local stencils for the three discrete Hessian operators 
	$\widehat{D}_{\bh}^2$, $\widetilde{D}_{\bh}^2$, and $\overline{D}_{\bh}^2$.  
	}
\label{fig:steps}
\end{figure}
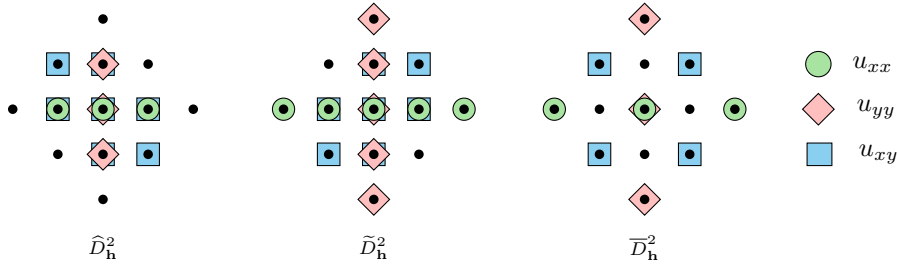


\subsubsection{\bf Properties of second order finite difference operators} \label{discrete_Hessian_comparison_sec} 

We now derive a relationship between $\widehat{D}_{\bh}^2$ and $\widetilde{D}_{\bh}^2$ 
that will be essential to the analysis of our narrow-stencil methods.  
The following expands upon observations in \cite{FengLewis21}.  

Let $V$ be a grid function defined over $\cT_{\bh}'$.  
Suppose $V = 0$ over $\cT_{\bh} \cap \partial \Omega$
and $V$ satisfies $\Delta_{\bh} V_\alpha = 0$ for all $\bx_\alpha \in \mathcal{S}_{\bh}$, 
where $\mathcal{S}_{\bh} \subset \cT_{\bh} \cap \partial \Omega$ is defined by 
\begin{align}\label{Sh_grid}
	\mathcal{S}_{\bh} \equiv \big\{ \bx_\alpha \in \cT_{\bh} \cap \partial \Omega \mid & \,  
		\bx_{\alpha} + h_i \mathbf{e}_i \in \cT_{\bh} \cap \Omega \text{ or } 
		\bx_{\alpha} - h_i \mathbf{e}_i \in \cT_{\bh} \cap \Omega \\ 
		\nonumber & 
		\text{ for some } i \in \{1,2,\ldots,d\} \big\} . 
\end{align}
Since $V=0$ over $\cT_{\bh} \cap \partial \Omega$, there also holds 
$\delta_{x_i, h_i}^2 V_\alpha = 0$ for all $\bx_\alpha \in \mathcal{S}_{\bh}$ and $i \in \{1,2,\ldots,d\}$ 
such that either $\bx_{\alpha - \mathbf{e}_i} \in \cT_{\bh} \cap \Omega$ or 
$\bx_{\alpha + \mathbf{e}_i} \in \cT_{\bh} \cap \Omega$.  
Note that the additional boundary information is needed to incorporate the ghost points associated with 
the extended grid $\cT_{\bh}'$.  

Let $\widehat{D}_{ij,0}$ and $\widetilde{D}_{ij,0}$ denote the matrix representations of $[ \widehat{D}_{\bh}^2 ]_{ij}$ 
and $[ \widetilde{D}_{\bh}^2]_{ij}$, respectively, 
restricted to grid functions defined over $\cT_{\bh} \cap \Omega$ 
with the boundary assumptions for $V$ built in.  
Notationally, the zero subscript is used to denote the boundary conditions.  
Then $\widehat{D}_{ii,0}$ is symmetric positive definite for all $i = 1,2,\ldots,d$.  
Lastly, let $\delta_{x_i, h_i;0}^2$ denote the central difference operator $\delta_{x_i, h_i}^2$ with 
the zero Dirichlet boundary data assumption.  

It is easy to check that (cf. \cite{FengLewis21}) there holds 
\[
	\widetilde{\delta}_{x_i, x_j; h_i, h_j}^2 V_\alpha - \widehat{\delta}_{x_i, x_j; h_i, h_j}^2 V_\alpha 
	= \frac{h_i h_j}{2} \delta_{x_i, h_i}^2 \delta_{x_j, h_j}^2 V_\alpha 
\]
for all $i,j = 1,2,\ldots,d$ and $\bx_\alpha \in \cT_{\bh} \cap \Omega$.  
Suppose $i \neq j$.  
If $\bx_{\alpha \pm \mathbf{e}_j} \in \cT_{\bh} \cap \partial \Omega$, 
then $\delta_{x_i, h_i}^2 V_{\alpha \pm \mathbf{e}_j} = 0$ 
using the fact that $x_i$ is orthogonal to $x_j$ and $V = 0$ over $\cT_{\bh} \cap \partial \Omega$.  
Then 
\begin{align*}
	\delta_{x_i, h_i; 0}^2 \delta_{x_j, h_j; 0}^2 V_\alpha
	& = \delta_{x_i, h_i; 0}^2 \delta_{x_j, h_j}^2 V_\alpha 
	= \delta_{x_i, h_i}^2 \delta_{x_j, h_j}^2 V_\alpha , 
\end{align*} 
and, by a simple computation, 
\begin{align*}
	\delta_{x_i, h_i}^2 \delta_{x_j, h_j}^2 V_\alpha 	
	= \delta_{x_j, h_j}^2 \delta_{x_i, h_i}^2 V_\alpha 
	& = \delta_{x_j, h_j; 0}^2 \delta_{x_i, h_i; 0}^2 V_\alpha 
\end{align*}
for all $\bx_\alpha \in \cT_{\bh} \cap \Omega$.  
Thus, 
\[
	\widetilde{D}_{ij,0} - \widehat{D}_{ij,0} = \frac{h_i h_j}{2} \widehat{D}_{ii,0} \widehat{D}_{jj,0} 
	= \frac{h_i h_j}{2} \widehat{D}_{jj,0} \widehat{D}_{ii,0} 
\]
for all $i \neq j$, 
and it follows that the matrix is symmetric positive definite.  

Choose $i \in \{1,2,\ldots,d \}$.  
Observe that, since $\delta_{x_i, h_i}^2 V_\alpha = 0$ over $\mathcal{S}_{\bh}$, there holds 
$\delta_{x_i, h_i}^2 \delta_{x_i, h_i}^2 V_\alpha = \delta_{x_i, h_i; 0}^2 \delta_{x_i, h_i}^2 V_\alpha$.  
Thus, by the Dirichlet boundary condition, we have 
\[
	\delta_{x_i, h_i}^2 \delta_{x_i, h_i}^2 V_\alpha = \delta_{x_i, h_i; 0}^2 \delta_{x_i, h_i; 0}^2 V_\alpha , 
\]
and it follows that 
\[
	\widetilde{D}_{ii,0} - \widehat{D}_{ii,0} = \frac{h_i^2}{2} \widehat{D}_{ii,0} \widehat{D}_{ii,0} .  
\]
Therefore, the matrix $\widetilde{D}_{ii,0} - \widehat{D}_{ii,0}$ is symmetric positive definite.  

We have proved the following lemma, where the result for 
$\overline{D}_{ij,0} \equiv \frac12 \widehat{D}_{ij,0} + \frac12 \widetilde{D}_{ij,0}$ 
is an immediate consequence:  

\begin{lemma} \label{hessians_lemma}
The matrix $\widetilde{D}_{ij,0} - \widehat{D}_{ij,0}$ is symmetric positive definite for all $i,j \in \{1,2,\ldots,d \}$.  
Furthermore, there holds 
$- \widehat{D}_{ij,0} > - \overline{D}_{ij,0} > - \widetilde{D}_{ij,0}$.  
\end{lemma}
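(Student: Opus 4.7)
The plan is to harvest the two identities already established in the discussion immediately preceding the lemma, namely
\[
\widetilde{D}_{ii,0} - \widehat{D}_{ii,0} = \frac{h_i^2}{2}\, \widehat{D}_{ii,0}\widehat{D}_{ii,0},
\qquad
\widetilde{D}_{ij,0} - \widehat{D}_{ij,0} = \frac{h_i h_j}{2}\, \widehat{D}_{ii,0}\widehat{D}_{jj,0} = \frac{h_i h_j}{2}\, \widehat{D}_{jj,0}\widehat{D}_{ii,0}
\]
for $i \neq j$, and then extract symmetry and positive definiteness from the fact (recorded above the lemma) that each $\widehat{D}_{ii,0}$ is symmetric positive definite.

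First I would handle the diagonal case. Here $\widetilde{D}_{ii,0}-\widehat{D}_{ii,0}$ equals $\frac{h_i^2}{2}$ times $\widehat{D}_{ii,0}^2$. Since $\widehat{D}_{ii,0}$ is SPD, it has a unique SPD square root $R$, so $\widehat{D}_{ii,0}^2 = R^4$ is the square of an SPD matrix and hence itself SPD; multiplying by the positive scalar $h_i^2/2$ preserves this. Next I would treat the off-diagonal case $i\neq j$. The two equivalent expressions for $\widetilde{D}_{ij,0}-\widehat{D}_{ij,0}$ prove by inspection that $\widehat{D}_{ii,0}$ and $\widehat{D}_{jj,0}$ commute (the two matrix products are equal), and this is the main content to verify — I would cite the mixed-partial-commutation computation already carried out in the body text using $V=0$ on $\mathcal{T}_{\bh}\cap\partial\Omega$ and $\Delta_{\bh}V_\alpha=0$ on $\mathcal{S}_{\bh}$. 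Given commutativity plus individual SPD, the product $\widehat{D}_{ii,0}\widehat{D}_{jj,0}$ is symmetric (its transpose equals $\widehat{D}_{jj,0}\widehat{D}_{ii,0}=\widehat{D}_{ii,0}\widehat{D}_{jj,0}$) and diagonalizable in a common orthonormal eigenbasis with eigenvalues equal to products of positive eigenvalues of the two factors; hence it is SPD, and scaling by $h_ih_j/2>0$ preserves this property.

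Finally, for the chain $-\widehat{D}_{ij,0} > -\overline{D}_{ij,0} > -\widetilde{D}_{ij,0}$, I would use the definition $\overline{D}_{ij,0}=\tfrac12\widehat{D}_{ij,0}+\tfrac12\widetilde{D}_{ij,0}$ to write
\[
\overline{D}_{ij,0}-\widehat{D}_{ij,0} = \tfrac12(\widetilde{D}_{ij,0}-\widehat{D}_{ij,0}),
\qquad
\widetilde{D}_{ij,0}-\overline{D}_{ij,0} = \tfrac12(\widetilde{D}_{ij,0}-\widehat{D}_{ij,0}),
\]
each of which is SPD by the first part. Negating and reversing the inequalities under the $\geq$ ordering introduced in the paper yields the stated chain.

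The only genuine obstacle is justifying that $\widehat{D}_{ii,0}\widehat{D}_{jj,0}$ is truly SPD in the off-diagonal case, because a product of SPD matrices is symmetric only when the factors commute; all the required commutation data, however, has already been marshalled in the discussion leading up to the lemma, so the proof itself reduces to a short linear-algebra argument built on those identities.
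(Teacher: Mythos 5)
Your proposal is correct and follows essentially the same route as the paper: the paper's proof consists precisely of the identities $\widetilde{D}_{ij,0}-\widehat{D}_{ij,0}=\frac{h_ih_j}{2}\widehat{D}_{ii,0}\widehat{D}_{jj,0}=\frac{h_ih_j}{2}\widehat{D}_{jj,0}\widehat{D}_{ii,0}$ (and the diagonal case $\frac{h_i^2}{2}\widehat{D}_{ii,0}^2$) derived in the discussion immediately preceding the lemma, followed by the observation that these products of commuting SPD matrices are SPD, with the ordering chain obtained exactly as you describe from $\overline{D}_{ij,0}=\frac12\widehat{D}_{ij,0}+\frac12\widetilde{D}_{ij,0}$. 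You in fact spell out the commuting-SPD linear algebra more explicitly than the paper does.
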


\subsection{Discontinuous Galerkin finite element derivative operators}\label{DG_perators}
We note that the above FD numerical derivative operators are only defined on uniform
Cartesian grids. In order to extend them to arbitrary grids, and, in particular, to triangular/tetrahedral grids, 
we utilize finite element DG numerical derivatives 
which were first introduced in  \cite{DG_calc} (also see \cite{FengLewis18}). Below
we recall their definitions and some useful properties, but we shall use new notations which
are consistent with the above FD discrete derivative operators.

\subsubsection{\bf DG mesh and space notations} \label{sec-2_3}

Let $\Omega$ be a polygonal domain, and let $\mathcal{T}_h$ 
denote a locally quasi-uniform shape-regular partition 
of the domain $\Omega$
with $h \equiv \max_{K \in \cT_h} (\text{diam} K)$.
We introduce the broken $H^1$-space and broken $C^0$-space
\[
H^1(\cT_h) \equiv \prod_{K \in \cT_h} H^1(K) , \qquad 
C^0 (\cT_h) \equiv \prod_{K \in \cT_h} C^0 ( \overline{K} )  
\]
and the broken $L^2$-inner product
\[
(v ,w)_{\mathcal{T}_h} \equiv \sum_{K \in \cT_h} \int_{K} v w\, dx \qquad 
\forall v,w \in L^2(\cT_h) .
\]
Let $\cE_h^I$ denote the set of all interior faces/edges of $\cT_h$, 
$\cE_h^B$ denote the set of all boundary faces/edges of $\cT_h$, 
and $\cE_h \equiv \cE_h^I \cup \cE_h^B$.
Then, for a set $\cS_h \subset \cE_h$, we define the broken $L^2$-inner product over $\cS_h$ by 
\[
\langle v ,w \rangle_{\cS_h} \equiv \sum_{e \in \cS_h} \int_{e} v \, w\, ds \qquad 
\forall v,w \in L^2(\cS_h) .
\]
For a fixed integer $r \geq 0$, we define the standard DG finite element space
$V^h \subset H^1 (\cT_h) \subset L^2(\Omega)$ by
\[
V^h \equiv \prod_{K \in \mathcal{T}_h} \poly_{r} (K),
\]
where $ \poly_{r} (K)$ denotes the set of all polynomials on $K$ with
degree not exceeding $r$.  

For $K, K'\in \cT_h$, let $e=\partial K\cap \partial K' \in \cE^I_h$. Without a loss of
generality, we assume that the global labeling number of $K$ is smaller than 
that of $K'$ and define the following (standard) jump and average notations: 
\begin{equation} \label{DG_jump_avg}
[v] \equiv v|_K-v|_{K'} , \qquad 
\{ v \} \equiv \frac{v|_K + v|_{K'}}{2} 
\end{equation}
for any $v\in H^m(\cT_h)$. We also define $\bn_e \equiv \bn_K=-\bn_{K'}$ as the normal vector to $e$. 

Based on the formulation in \cite{FengLewis18}, we will extend the jump and average operators to the boundary of the 
domain in a nonstandard way.  
Let $K \in \cT_h$ such that $e \subset \partial K \in \cE_h^B$, 
and define $\bn_e$ as the unit outward normal for the underlying boundary simplex. 
To unify notation, we impose the convention that the set exterior to the domain $\Omega$ has 
a global labeling number of 0 with the indexing starting at 1 for the ``first" label for the simplices in $\cT_h$.  
Then, we can use the same convention as for interior edges and define 
\begin{equation} \label{DG_jump_avg_boundary}
[v] \equiv v|_{\Omega^c}-v|_{K} , \qquad 
\{ v \} \equiv \frac{v|_{\Omega^c} + v|_{K}}{2} , 
\end{equation}
where $\Omega^c = \mathbb{R}^d \setminus \Omega$.  
Below we will specify how to choose values for $v|_{\Omega^c}$ 
and how to interpret $v|_{K}$ in order to naturally impose a boundary condition.  

Using the jump and average operators for $\cE_h$, we define the labelling-dependent
trace operators $T_i^\pm(v) : \cE_h \to \mathbb{R}$ for each $i=1,2,\ldots,d$ 
for a given function $v \in H^m(\cT_h)$ by 
\begin{equation} \label{trace_dg}
T_i^\pm(v) \equiv \big\{ v \big\} \mp \frac12 \text{sgn} (n_e^{(i)}) \big[ v \big]  \quad
\mbox{where} \quad
\text{sgn}(y) = \begin{cases}
1 & \text{if } y > 0 , \\ 
-1 & \text{if } y < 0 , \\ 
0 & \text{if } y = 0 
\end{cases} 
\end{equation}
for all $y \in \mathbb{R}$ 
and $n_e^{(i)}$ denoting the $i$-th component of $\bn_e$.  
Note that the exact trace values for $v$ along the boundary still need to be specified 
for the jump and average operators.  

Let $K \in \cT_h$ such that $e \subset \partial K \in \cE_h^B$.  
Suppose we have Dirichlet boundary data for the given function $v$, denoted by $g$.  
Then, for $r \geq 1$, we assume $v|_{\Omega^c} = v|_{K} = g$ so that 
$T_i^\pm(v) = g$.  
Such an assumption yields the standard interpretation as introduced in \cite{DG_calc}.
If $r = 0$, we assume $v|_{\Omega^c} = g$ and $v|_{K}$ is given by the interior limit for $v$.  
Thus, $T_i^\pm(v)$ is given by either $g$ or the interior limit for $v$ 
depending on the choice for $\pm$ and the sign of the $i$-th component of the unit normal vector. 
Such a nonstandard approach allows for weighting degrees of freedom associated with the 
Dirichlet data against degrees of freedom associated with the value on the interior of $K$.  
Since $r=0$ implies only one degree of freedom is available on $K$, such a weighting 
is essential to not overly emphasize the boundary condition.  
Notationally, we write $T_i^{\pm,g}$ to denote the natural enforcement of the Dirichlet boundary data $g$. 
When no boundary data is explicitly given and $r \geq 1$, we assume 
$v|_{\Omega^c} = v|_{K}$ for $v|_{K}$ given by the interior limit for $v$. 
If $r = 0$, we always let $v|_{K}$ be given by the interior limit for $v$ 
and have to assign values for $v|_{\Omega^c}$ appropriately.  
Typically we assume $v|_{\Omega^c} = v|_{K}$ or 
$T_i^+(v) = T_i^-(v)$ with the understanding that $v|_{K}$ is given by the interior limit for $v$.
More explicit values for $v|_{\Omega^c}$ can be assigned based on context 
as in \cite{FengLewis18}.  
Notationally, the use of $T_i^{\pm}$ denotes the lack of given Dirichlet data.

\begin{remark} \ 
\begin{itemize}
\item[(a)] 
The trace operators $T_i^{\pm}$ and $T_i^{\pm,g}$ 
are nonstandard in that their values depend on the individual 
components of the edge normal $\bn_e$. 
The standard definition used for LDG assigns a single-value (called
a numerical flux) based on the edge normal vector as a whole.  
\item[(b)]
A labelling-independent definition can also be used so that 
$T_i^{\pm}$ can be associated with the upwind or downwind direction 
with respect to the $x_i$ axis.  
The conventions are equivalent on a uniform Cartesian mesh 
using the natural ordering.  
See \cite{LewisNeilan14} for more details.  
\end{itemize}
\end{remark}

\subsubsection{\bf First order DG derivative operators}

The main idea in \cite{DG_calc} for defining DG derivative operators 
is to use the following local integration by parts formula 
for a given function $v \in H^1(\cT_{\bh}) \cap C^0 (\overline{\Omega})$: 
\begin{equation} \label{DG_greenes}
\int_K v_{x_i} \, \varphi \, dx
= \int_{\partial K} v \, \varphi(x^I) \, n_i \, ds 
- \int_K v \, \varphi_{x_i} \, dx, \qquad i = 1, 2, \ldots, d,\, K\in \cT_h,  
\end{equation}
with test functions $\varphi$ chosen from the DG space $V^h$, 
 $\phi(x^I)$ denoting the limit from the interior of $K$, 
and $n_i$ denoting the $i$-th component of the unit outward normal vector for $K$.  
Thus, the DG (partial in $x_i$) derivative intends to approximate the weak partial derivative $v_{x_i}$ for all 
$v \in H^1 (\cT_h)$. 
To this end, the trace value $v|_{\p K}$ 
must be appropriately chosen/defined when $v$ is not continuous.  

We define DG first order partial derivative operators $\p^\pm_{x_i,h}$ 
for $i=1,2,\cdots, d$ as follows: for $u\in H^1(\cT_h)$, 
\begin{equation} \label{DG_local_q}
\int_K \p^\pm_{x_i,h} u  \, \phi \, dx 
\equiv \int_{\partial K} T_i^\pm (u) \phi(x^I)\, n_{K}^{(i)} \, ds
-\int_K u \, \phi_{x_i} \, dx \qquad\forall \phi \in V^h
\end{equation}
for all $K \in \cT_h$.  
Notice that the ``forward/backward" DG first order derivative operators $\p^\pm_{x_i,h}$ are different
if the values of $T_i^\pm(u)$ are different due to a discontinuity in $u$.  
It is easy to check that $\p^\pm_{x_i, h}$ coincides with the FD operators $\delta^\pm_{x_i, h_i}$ on 
Cartesian grids when using the natural ordering (cf. \cite{DG_calc}). 
Hence, the forward/backward DG derivative operators are indeed generalizations of the forward/backward 
difference operators to unstructured grids. 
When boundary trace data $g$ is known, we define the 
DG first order partial derivative operators $\p^{\pm,g}_{x_i, h}$ that naturally enforce the boundary data by 
\begin{equation} \label{DG_local_q_bc}
\int_K \p^{\pm,g}_{x_i,h} u  \, \phi \, dx 
=\int_{\partial K} T_i^{\pm,g} (u) \phi(x^I)\, n_{K}^{(i)} \, ds
-\int_K u \, \phi_{x_i} \, dx \qquad\forall \phi \in V^h
\end{equation}
for all $K \in \cT_h$ and $i=1,2,\ldots,d$.  

Using the DG first order partial derivative operators as building blocks, we can define various 
central DG first order derivative operators 
and corresponding DG finite element gradient operators.  
Let $i \in \{1,2,\ldots,d\}$.  
Then, we define 
\[
\p_{x_i,h} \equiv \frac12 \Bigl(\p^{+}_{x_i,h}   +  \p^{-}_{x_i,h}  \Bigr) 
\]
as a generalization of the central difference operator $\overline{\delta}_{x_i, h_i}$.  
If boundary data $g$ is given, then we define the following two central 
DG first derivative operators that naturally enforce the boundary condition: 
\[
	\p_{x_i,h}^g \equiv \frac12 \Bigl(\p^{+,g}_{x_i,h}   +  \p^{-,g}_{x_i,h}  \Bigr) , \qquad 
	\overline{\p}_{x_i,h}^g \equiv \frac12 \Bigl(\p^{g}_{x_i,h}  +  \p_{x_i,h}  \Bigr) . 
\]
The first operator $\p_{x_i,h}^g$ naturally generalizes the form of the central difference operator $\delta_{x_i, h_i}$ 
when acting on a grid function with known boundary values, 
and the operator $\overline{\p}_{x_i,h}^g$ generalizes the central difference operator $\delta_{x_i, h_i}$ 
in the sense that both correspond to antisymmetric matrices when vectorized with $g=0$ 
(see \cite{FengLewisRapp21} for the motivation for $\overline{\p}_{x_i,h}^g$).  
In general, we use $\p_{x_i,h}^g$ when $r=0$ and $\overline{\p}_{x_i,h}^g$ when $r \geq 1$ 
to naturally enforce boundary conditions while appropriately weighting interior degrees of freedom 
versus fixed boundary data.  
Corresponding finite element gradient operators $\nabla_h^\pm$, $\nabla_h$, $\nabla_h^{\pm,g}$, $\nabla_h^{g}$, 
and $\overline{\nabla}_h^{g}$ are naturally defined by letting all components 
be given by the appropriate DG first order partial derivative operator.  
For example, 
$\nabla^\pm_{h} \equiv \bigl(\p^\pm_{x_1,h},\p^\pm_{x_2,h}, \cdots, \p^\pm_{x_d,h} \bigr)^T$.

\subsubsection{\bf Second order DG derivative operators}
Similar to the finite difference (and to the classical calculus) construction, 
using first order DG derivative operators as the building blocks, we can easily 
define their high order extensions. Below we only define the second order operators. 
We also only define the operators that will be used directly in our 
framework for approximating fully nonlinear elliptic equations.  
We will only consider the case when boundary conditions correspond to Dirichlet boundary data.  
More information about Neumann boundary data can be found in 
\cite{FengLewisWise2015,DG_calc}.  

We first define the following one-sided second order DG partial derivatives:
\begin{equation} \label{DG_2d_partials}
\p_{x_ix_j, h}^{\mu\nu} \equiv \p_{x_j,h}^\nu \p_{x_i,h}^\mu, 
\qquad 
\p_{x_ix_j, h}^{\mu\nu,g} \equiv \p_{x_j,h}^\nu \p_{x_i,h}^{\mu,g}
\qquad \mu,\nu\in \{+,-\} , 
\end{equation}
where $g$ corresponds to given Dirichlet boundary data.  
Then we define the eight ``sided" $d \times d$ matrix-valued DG Hessian operators
\begin{equation} \label{DG_Hessian}
D_h^{\mu\nu} \equiv \bigl[\p_{x_ix_j,h}^{\mu \nu} \bigr]_{i,j=1}^d, 
\qquad D_h^{\mu\nu,g} \equiv \bigl[\p_{x_ix_j,h}^{\mu \nu,g} \bigr]_{i,j=1}^d,
\qquad \mu,\nu\in \{+,-\}, 
\end{equation}
and the six central 
$d \times d$ matrix-valued DG Hessian operators
\begin{subequations} \label{DG_Hessianb}
\begin{alignat}{2}
\widehat{D}_h^2 & \equiv \frac12 \left( D_h^{+-} + D_h^{-+} \right), 
\qquad && \widehat{D}_h^{2,g}  \equiv \frac12 \left( D_h^{+-,g} + D_h^{-+,g} \right) , \\ 
\widetilde{D}_h^2 & \equiv \frac12 \left( D_h^{++} + D_h^{--} \right), 
\qquad && \widetilde{D}_h^{2,g}  \equiv \frac12 \left( D_h^{++,g} + D_h^{--,g} \right) , \\ 
\overline{D}_h^2 & \equiv \frac12 \left( \widehat{D}_h^{2} + \widetilde{D}_h^{2} \right), 
\qquad && \overline{D}_h^{2,g} \equiv \frac12 \left( \widehat{D}_h^{2,g} + \widetilde{D}_h^{2,g} \right) 
\end{alignat}
\end{subequations}
that can be used when assuming the underlying method has reduced form as introduced below.  

\begin{remark}
 It can be shown (\cite{DG_calc}) that the above 
second order DG operators coincide with their corresponding FD operators on Cartesian grids. 
Moreover, it is easy to see that all of the DG operators defined above can be applied to 
any piecewise ``nice" functions on $\cT_h$ including those in $V^h$. 
\end{remark}


\section{A narrow-stencil and g-monotone numerical framework} \label{framework_sec}

In this section we formulate a general framework for both FD and DG methods 
that can be used to approximate fully nonlinear elliptic boundary value problems 
using narrow-stencil methods.  
We first introduce the ideas using FD methods.  
We then provide examples and extend the ideas to DG methods.  

\subsection{A narrow-stencil FD framework} 

The narrow-stencil FD schemes that we consider will all correspond to seeking 
a grid function $U_\alpha: \NJ'\to \mathbb{R}$ such that 
\begin{subequations}\label{FD_method}
\begin{alignat}{2} 
\hF[U_\alpha, \bx_\alpha] & =0  &&\qquad\mbox{for } \mathbf{x}_\alpha\in\mathcal{T}_{\mathbf{h}} \cap \Omega, \label{FD_method:1}\\
U_\alpha &= g(\mathbf{x}_\alpha)  &&\qquad\mbox{for } \mathbf{x}_\alpha\in\mathcal{T}_{\mathbf{h}}\cap  \partial\Omega , \label{FD_method:2} \\ 
\Delta_{\mathbf{h}} U_\alpha & = 0 
&& \qquad \mbox{for } \mathbf{x}_\alpha\in \mathcal{S}_{\mathbf{h}} \subset \mathcal{T}_{\mathbf{h}} \cap \partial\Omega 
\label{FD_auxiliaryBC} 
\end{alignat}
\end{subequations}
for all $\alpha \in \NJ$, 
where
\begin{align}\label{hatF}
\hF[U_\alpha, \bx_\alpha] &\equiv \widehat{F}\bigl(D_{\mathbf{h}}^{--}U_\alpha, 
D_{\mathbf{h}}^{-+} U_\alpha, D_{\mathbf{h}}^{+-}U_\alpha, 
D_{\mathbf{h}}^{++} U_\alpha, \overline{\nabla}_{\bh} U_\alpha , U_\alpha,\bx_\alpha \bigr) . 
\end{align}
Since the schemes only depend upon the discrete Hessian operators $D_{\bh}^{\mu \nu}$ 
and the discrete gradient operator $\overline{\nabla}_{\bh}$, 
they are inherently narrow-stencil.  
The multiple Hessian operators are used to avoid the directional resolution approach used 
for monotone schemes that often lead to the use of wide-stencils.  
We also note that the auxiliary boundary condition \eqref{FD_auxiliaryBC} needed to define the ghost points that arise when 
calculating $D_{\bh}^{\pm \pm} U_\alpha$ for nodes $\bx_\alpha$ near the boundary could 
be generalized to setting $\Delta_{\mathbf{h}} U_\alpha = h(\bx_\alpha)$  
for some bounded function $h$.  
A well chosen $h$ can increase the accuracy of the underlying scheme by removing any boundary layer error 
associated with the auxiliary boundary condition.  
Such an $h$ can be chosen using a refining process by solving various iterations of \eqref{FD_method} 
with increasingly better chosen functions $h$ based on the previous iteration.  

The main goal for this paper is to define sufficient conditions that $\hF$ can satisfy 
in order to guarantee the scheme \eqref{FD_method} is admissible and convergent.  
The following definitions are adapted from the 1D definitions presented in \cite{FengKaoLewis13}.  

\smallskip
\begin{definition}\label{framework_def} \
	
\begin{itemize} 
\item[{\rm (i)}] A function 
$\hF: \left( \mathbb{R}^{d \times d} \right)^4 \times \mathbb{R}^d \times \mathbb{R} \times \Omega \to \mathbb{R}$ 
is called  a {\em numerical operator}. 
\item[{\rm (ii)}] A numerical operator $\hF$ is said to be {\em consistent} (with 
the differential operator $F$) if $\hF$ satisfies
\begin{align*}
\liminf_{P^{\mu \nu} \to P; \mu, \nu = -, + \atop \mathbf{q} \to \mathbf{v}, \lambda \to v, \xi \to \bx} 
\hF(P^{- -}, P^{- +}, P^{+ -}, P^{+ +} , \mathbf{q}, \lambda, \xi) \geq F_*(P,\mathbf{v}, v,\bx),\\
\limsup_{P^{\mu \nu} \to P; \mu, \nu = -, + \atop \mathbf{q} \to \mathbf{v}, \lambda \to v, \xi \to \bx} 
\hF(P^{- -}, P^{- +}, P^{+ -}, P^{+ +} , \mathbf{q}, \lambda, \xi) \leq F_*(P,\mathbf{v},v,\bx), 
\end{align*}
where $F_*$ and $F^*$ denote, respectively, the lower and upper
semi-continuous envelopes of $F$.

\item[{\rm (iii)}] A numerical operator $\hF$ is said to be
{\em generalized-monotone or g-monotone} if there holds 
\[
\hF(A^{++}, B^{+-}, B^{-+}, A^{--}, \mathbf{q}, v, \bx) \leq \hF(B^{++}, A^{+-}, A^{-+}, B^{--}, \mathbf{q}, w, \bx) 
\]
for all $A^{\mu \nu},B^{\mu \nu} \in \mathbb{R}^{d \times d}$; $\mathbf{q} \in \mathbb{R}^d$; 
$v,w \in \mathbb{R}$; $\bx \in \Omega$ 
such that $B^{\mu \nu} \succeq A^{\mu \nu}$ and $w \geq v$ 
for all $\mu, \nu \in \{ + , - \}$.  
A numerical operator $\hF$ is said to be {\em uniformly g-monotone} if 
there exists a constant $\kappa_* > 0$ such that 
$\hF(P^{++}, P^{+-}, P^{-+}, P^{--}, \mathbf{q}, v, \bx)$ is increasing in 
$P^{++}$, $P^{--}$, and $v$ at a rate bounded below by $\kappa_*$ 
and decreasing in $P^{+-}$ and $P^{-+}$ at a rate bounded above by $-\kappa_*$ 
using the partial ordering imposed by $\succeq$.  

\item[{\rm (iv)}] 
A numerical operator $\hF$ can be written in {\em reduced form} if there exists a function 
$\widehat{G} : \left( \mathbb{R}^{d \times d} \right)^2 \times \mathbb{R} \times \Omega \to \mathbb{R}$
such that 
\[
	\hF(P^{- -}, P^{- +}, P^{+ -}, P^{+ +} , \mathbf{v} , v, \bx) 
	= \widehat{G} ( \widetilde{P}, \widehat{P} , \mathbf{v} , v, \bx) 
\]
for $\widetilde{P} \equiv \frac12 (P^{--} + P^{++})$ and $\widehat{P} \equiv \frac12 (P^{-+} + P^{+-})$ 
for all $P^{- -}$, $P^{- +}$, $P^{+ -}$, $P^{+ +} \in \mathbb{R}^{d \times d}$;
$\mathbf{v} \in \mathbb{R}^d$; $v \in \mathbb{R}$; and $\bx \in \Omega$.  
 
\end{itemize}
\end{definition}

\begin{remark} \
\begin{itemize}
\item[(a)] 
When $F$ and $\widehat{F}$ are continuous, the definition of consistency can be simplified to
$\hF(P, P, P, P ,\mathbf{v},v, \bx) = F(P,\mathbf{v},v,\bx)$
for all $P \in \mathbb{R}^{d \times d}$, $\mathbf{v} \in \mathbb{R}^d$, $v \in \mathbb{R}$, and $\bx \in \Omega$.  
\item[(b)] 
When $\hF$ is differentiable, g-monotonicity can be defined by requiring that the
matrices $\frac{\partial \hF}{\partial P^{- -}}$ and $\frac{\partial \hF}{\partial P^{+ +}}$ 
have all nonnegative entries, 
the matrices 
$\frac{\partial \hF}{\partial P^{- +}}$ and $\frac{\partial F}{\partial P^{+ -}}$ 
have all nonpositive entries, 
and $\frac{\partial \hF}{\partial v}$ is nonnegative.  
In other words, 
$\hF(\uparrow,\downarrow,\downarrow, \uparrow, \cdot, \uparrow,\cdot)$.
For a uniformly g-monotone numerical operator, 
$\frac{\partial \hF}{\partial P^{- -}} \succeq \kappa_* \mathbf{1}_{d \times d}$, 
$\frac{\partial \hF}{\partial P^{+ +}} \succeq \kappa_* \mathbf{1}_{d \times d}$, 
and $\frac{\partial \hF}{\partial v} \geq \kappa_*$ 
while 
$-\frac{\partial \hF}{\partial P^{+ -}} \succeq \kappa_* \mathbf{1}_{d \times d}$ and 
$-\frac{\partial \hF}{\partial P^{- +}} \succeq \kappa_* \mathbf{1}_{d \times d}$, 
where $\mathbf{1}_{d \times d}$ denotes the matrix with all components equal to 1.  
\item[(c)] 
The g-monotonicity approach for narrow-stencil methods 
uses a component partial ordering instead of the SPD partial ordering 
for symmetric matrices.  
The approach also directly compares high order differences instead of looking directly at function values 
as in the standard monotonicity approach of Barles and Souganidis making it easier 
to design g-monotone schemes for a wide class of problems.  
The consistency of $\hF$ with $F$ will allow the scheme to also take advantage of the 
SPD partial ordering associated with a proper elliptic operator.  
\item[(d)] 
To simplify notation, we will assume $\hF$ can be written in reduced form and write 
$\hF ( \widetilde{P}, \widehat{P} , \mathbf{v}, v, \bx)$ 
instead of introducing the new function $\widehat{G}$.  
\end{itemize}
\end{remark}

A key tool for designing the g-monotone numerical operators in Section~\ref{examples_sec} 
is the introduction of a numerical moment as 
defined in \cite{FengLewis21}: 

\begin{definition} \label{moment_def} 
Let $A : \mathbb{R}^J \times \cT_{\mathbf{h}} \to \mathbb{R}^{d \times d}$ 
and $V$ be a given grid function.  
The discrete operator $M : \mathbb{R}^J \to \mathbb{R}$ defined by 
\[
M[V,\bx_\alpha] \equiv  A \bigl( V_\alpha , \mathbf{x}_\alpha \bigr) 
: \bigl( \widetilde{D}_{\mathbf{h}}^2 V_\alpha - \widehat{D}_{\mathbf{h}}^2 V_\alpha \bigr)
\] 
for all $\bx_\alpha \in \cT_{\bh} \cap \Omega$ 
is called a {\em numerical moment operator}.   
\end{definition}

\subsection{Examples of g-monotone FD methods} \label{examples_sec}

We now introduce particular examples of g-monotone FD methods that 
fulfill the structure assumptions of the narrow-stencil framework.  
The first method is the Lax-Friedrichs-like method proposed in \cite{FengLewis21} 
that uses both a numerical moment and a numerical viscosity 
(where the g-monotone definition could be extended for multiple discrete gradient arguments).  
The (general) method is defined by 
\begin{align*}
\hF[U_\alpha, \bx_\alpha] 
&\equiv F \left( \overline{D}_{\mathbf{h}}^2 U_\alpha , \overline{\nabla}_{\mathbf{h}} U_\alpha , U_\alpha , \bx_\alpha \right) 
	+ A(U_\alpha, \bx_\alpha) : 
		\bigl( \widetilde{D}_{\mathbf{h}}^2 U_\alpha - \widehat{D}_{\mathbf{h}}^2 U_\alpha \bigr)  \\ 
	\nonumber &\qquad 
	 	- \vec{\beta}(U_\alpha , \bx_\alpha) \cdot 
	 \bigl( \nabla_{\mathbf{h}}^+ U_\alpha - \nabla_{\mathbf{h}}^- U_\alpha \bigr), 
\end{align*}
where $\vec{\beta} : \mathbb{R}^J \times \cT_{\bh} \to \mathbb{R}^d$ is a vector-valued function 
and $- \vec{\beta}(U_\alpha , \bx_\alpha) \cdot 
	 \bigl( \nabla_{\mathbf{h}}^+ U_\alpha - \nabla_{\mathbf{h}}^- U_\alpha \bigr)$ 
is called a numerical viscosity.  
Note that the method is (globally) g-monotone and consistent using the framework above for 
the particular choices $\vec{\beta} = \vec{0}$ and 
$A = \sigma \mathbf{1}_{d \times d}$ for the constant $\sigma > K/2$, 
where 
$K$ denotes the global Lipschitz constant of $F$ with respect to the Hessian argument 
and $\mathbf{1}_{d \times d}$ denotes the matrix with all entries equal to one.  
The choices $\vec{\beta} = \vec{0}$ and $A = \sigma \mathbf{1}_{d \times d}$ for $\sigma > K/2$ 
were the focus in the admissibility, stability, and convergence analysis in \cite{FengLewis21}.  

In Section \ref{experiments_sec} we test the performance of the consistent 
FD method corresponding to the choice 
\newpage
\begin{align}\label{Fgamma}
	\hF_{\gamma,\sigma} [U_\alpha, \bx_\alpha] 
	& = F \left( \overline{D}_{\bh}^2 U_\alpha , \overline{\nabla}_{\bh} U_\alpha , U_\alpha , \bx_\alpha \right) \\
	&\nonumber\qquad 
		+ \left( M_\alpha + \gamma I_{d \times d} + \sigma \mathbf{1}_{d \times d} \right) 
			: \left( \widetilde{D}_{\bh}^2 U_\alpha - \widehat{D}_{\bh}^2 U_\alpha \right) 
\end{align}
for $\sigma \geq 0$ and $\gamma + \sigma \geq 0$,  
where 
\[
	\left[ M_\alpha \right]_{ij} \equiv \frac12 \left| \frac{\partial F}{\partial P_{ij}} \right|_{\left( \overline{D}_{\bh}^2 U_\alpha , \overline{\nabla}_{\bh} U_\alpha , U_\alpha , \bx_\alpha \right) }
\]
for all $i,j = 1,2,\ldots,d$ 
using the convention that $F = F(P, \mathbf{v}, v, \bx)$ for 
$P \in \mathbb{R}^{d \times d}$, $\mathbf{v} \in \mathbb{R}^d$, $v \in \mathbb{R}$, and $\bx \in \Omega$. 
By construction, the method is only locally g-monotone in the sense that the 
linearization of the method is g-monotone. 
Furthermore, for $\sigma > 0$, the method is only locally uniformly g-monotone.  
The admissibility proof in Section~\ref{admissible_sec} when applied to $\hF_{\gamma,\sigma}$ 
holds for $\gamma \geq 0$ and $\sigma = 0$ and $\gamma \geq - \sigma$ with $\sigma \geq 0$ 
if the problem is uniformly elliptic.  
If the operator $F$ is only degenerate elliptic but globally Lipschitz, 
the proof would hold for either $\gamma > 0$ or $\sigma > 0$.  

Consider the linear problem $A:D^2 u = f$. 
Then, the method $\hF_{0,0}$ is equivalent to 
\[
	\hF_{0,0} [U_\alpha, \bx_\alpha] 
	= F \left( D_{\bh}^2 U_\alpha , \bx_\alpha \right) = A : D_{\bh}^2 U_\alpha - f(\bx_\alpha)
\]
for the discrete Hessian $D_{\bh}^2$ defined by  
\begin{equation} \label{Dhlin}
	\left[ D_{\bh}^2 U_\alpha \right]_{ij} 
	= \begin{cases}
	\widetilde{\delta}_{x_i, x_j; h_i, h_j}^2 U_\alpha & \text{if } a_{ij}(\bx_\alpha) > 0 , \\ 
	\widehat{\delta}_{x_i, x_j; h_i, h_j}^2 U_\alpha & \text{if } a_{ij}(\bx_\alpha) \leq 0  
	\end{cases}
\end{equation}
for all $i,j \in \{1,2,\ldots,d\}$ and $\bx_\alpha \in \cT_{\bh} \cap \Omega$.  
Thus, the method takes the form of an upwinding-type method where, instead of matching the 
choice of the discrete partial derivative approximation to the advection field, 
we match the choice of the discrete second-order partial derivative approximation to the sign 
of the corresponding diffusion coefficient in $A$.  
For $-A$ symmetric nonnegative definite, we would have $a_{ii} \leq 0$ for all $i=1,2,\ldots,d$.  
Consequently, when $\gamma = \sigma = 0$, the method would only have a nine-point stencil instead of a 13-point stencil 
in two-dimensions, 
and the auxiliary boundary condition $\Delta_{\bh} U_\alpha = 0$ would not be required.  
Similarly, the choice $\gamma = - \sigma$ would also only have a nine-point stencil and would not 
require the auxiliary boundary condition.  
We would further reduce the stencil to only seven points if the diffusion coefficient $a_{12}$ has a fixed sign.  
Thus, the methods based on choosing $\gamma = \sigma = 0$ 
or $\gamma = - \sigma$ are of particular interest since   
the choices $\gamma = - \sigma$ or $\gamma \geq 0$ with $\sigma = 0$ represent 
limiting choices for enforcing the g-monotonicity of a numerical operator $\hF$  
while remaining consistent with the PDE operator $F$.  

\subsection{A narrow-stencil DG framework} 

We can also naturally formulate narrow-stencil and g-monotone DG methods by seeking 
a piecewise polynomial function $u_h \in V^h$ such that 
\begin{align} \label{DG_method}
\Bigl( \hF[u_h] , \varphi_h \Bigr)_{\cT_h} 
+ \gamma^B \sum_{e \in \cE^B} \frac{1}{h_e} \Big\langle u_h - g , \varphi_h  \Big\rangle_e 
+ \gamma^I \sum_{e \in \cE^I} \frac{1}{h_e} \Big\langle [ u_h ] , [ \varphi_h ] \Big\rangle_e
= 0 
\end{align}
for all $\varphi_h \in V^h$, where 
$\gamma^B, \gamma^I \geq 0$ and 
\[
	\hF[u_h] = \widehat{F}\bigl(D_{h}^{--,g} u_h , D_{h}^{-+,g} u_h , D_{h}^{+-,g} u_h , D_{h}^{++,g} u_h , \overline{\nabla}_h^g u_h , u_h , \cdot \bigr) 
\]
is the same as the numerical operator used for FD methods but is now evaluated using DG derivatives.  
Notationally, $h_e$ denotes the diameter of $e$ 
and $u_h, \varphi_h$ are evaluated using interior limits over $e \in \cE^B$ 
when $\gamma^B > 0$.  
When $r=0$, we use $\nabla_h^g$ instead of $\overline{\nabla}_h^g$ 
to approximate the gradient operator since the corresponding trace operators 
naturally weight exterior limits versus interior limits when assuming only the exterior limit corresponds to $g$.  
We also set $\gamma^B = \gamma^I = 0$ when $r=0$ to ensure consistency with the DG method 
and the underlying FD method on uniform Cartesian grids.  

Observe that the term $\Bigl( \hF[u_h] , \varphi_h \Bigr)_{\cT_h}$ corresponds to projecting the 
numerical operator $\hF[u_h]$ into the discrete space $V^h$ using an $L^2$ projection.  
For a quasi-uniform mesh, 
the penalty terms can be controlled using the uniform ellipticity assumption for $F$ 
and the properties of the DWDG method for approximating Poisson's equation derived in \cite{LewisNeilan14}.  
Consequently, we can choose $\gamma^B = \gamma^I = 0$ even when $r \geq 1$.  
As such, the formulation for DG methods requires projecting the FD formulation into the discrete space 
and optionally adding penalization.  
We note that the auxiliary boundary condition is not required for $r \geq 1$ based on the definitions 
of the DG derivative operators and, in particular, the way in which the boundary trace operators are defined.  
For $r=0$, explicit rules for defining the exterior values for the boundary trace operators 
are provided in \cite{FengLewis18}, and they are consistent with the Dirichlet boundary data 
and the auxiliary boundary condition \eqref{FD_auxiliaryBC}. 
Letting $q_i^\pm = \partial_{x_i, h}^{\pm,g} u_h$, the difficulty addressed in 
\cite{FengLewis18} is how to define $q_i^{\pm} \big|_{\Omega^c}$, which can be thought of as 
defining ghost points for the partial derivative with respect to $x_i$ when $n_i \neq 0$.  
 We refer the reader to \cite{FengLewis18} for the complete formulation when $r=0$.  

\begin{remark} \
\begin{enumerate}
\item[(a)]
By construction, the proposed DG methods can be considered ``narrow-stencil."   
\item[(b)]
When utilizing $\nabla_h^g$ instead of $\overline{\nabla}_h^g$, 
the DG method \eqref{DG_method} is equivalent to the nonstandard LDG methods in \cite{FengLewis18} written 
in a compact form using the DG finite element calculus.  
For the unified framework we utilize $\overline{\nabla}_h^g$ to more closely mimic the antisymmetric 
property of the FD operator $\delta_{x_i, h_i}$ as inspired by \cite{FengLewisRapp21} 
where DG methods were formulated for approximating stationary Hamilton-Jacobi equations. 
\item[(c)]
The DG method \eqref{DG_method} is equivalent to the FD method \eqref{FD_method} 
when $\cT_h$ is a uniform Cartesian mesh and the natural ordering is used.  
As such, all of the analytical results in Sections~\ref{convergence_sec}, \ref{admissible_sec}, and \ref{stability_sec} 
can be extended to \eqref{DG_method} in this special case 
while, in general, the DG approach formally allows for 
higher degree bases and more general meshes.  
\end{enumerate}
\end{remark}


\section{Convergence analysis} \label{convergence_sec}

In this section we prove that consistent, stable, and g-monotone methods converge to the 
underlying viscosity solution of \eqref{bvp}.  
Similar to the proof in \cite{FengKaoLewis13}, the result will assume the numerical operator 
can be written in reduced form.  
We also use the definition in \cite{FengLewis21} that defines  
a piecewise constant extension $u_{\mathbf{h}}$ for a given grid function 
$U \in S(\cT_{\mathbf{h}}')$ 
by 
\begin{equation} \label{FD_extension_nd}
	u_{\mathbf{h}} (\mathbf{x}) \equiv 
	U_\alpha , \qquad \mathbf{x} \in B_\alpha  
\end{equation} 
for all $\alpha \in \mathbb{N}_J'$, 
where 
$B_\alpha \equiv \prod_{i=1,2,\ldots,d} 
		\big( \mathbf{x}_\alpha-\frac{h_i}2  { \mathbf{e}_i} , \mathbf{x}_\alpha+ \frac{h_i}2 { \mathbf{e}_i}  \big] $
for all $\mathbf{x} \in \Omega' \equiv \cup_{\alpha \in \mathbb{N}_J'} B_\alpha \supset \overline{\Omega}$.    

For transparency, we will  only explicitly consider operators $F$ that have the form 
\[
	F[u](\bx) = F \left( D^2 u, u, \bx \right) 
\] 
in \eqref{FD_pde}.  
We note that the proof can be readily extended to the more general case 
$F[u](\bx) = F \left( D^2 u, \nabla u , u, \bx \right)$ 
using the techniques in \cite{FengLewis21}.  
Indeed, in the proof below, 
we would have $\overline{\nabla}_{\bh_{\bk}} u_{\bh_{\bk}} (\bz_{\bk}) \to \nabla \varphi(\bx_0)$ in {\em Case {\rm (i)}} 
which exploits the consistency of the scheme
and \eqref{conv_eq2} 
could be rewritten as 
\begin{align*}
	& F_* \bigl(\widehat{D}_{\bh_{\bk}}^2 u_{\bh_{\bk}}(\bx_{\bk}) , \overline{\nabla}_{\bh_{\bk}} u_{\bh_{\bk}}(\bx_{\bk}) , u_{\bh_{\bk}}(\bx_{\bk}) ,\bx_{\bk} \bigr) 
			- F_* \left( D^2 \varphi(\bx_0) , \nabla \varphi(\bx_0) , \varphi(\bx_0) , \bx_{\bk} \right) \\
	\nonumber& \qquad \geq 
		- \lambda \delta_{x_{\ell}, h_{\ell}^{(k_{\ell})}}^2 u_{\bh_{\bk}}(\bx_{\bk}) 
		- \lambda \left| \varphi_{x_\ell, x_\ell}(\bx_0) \right| 
		- K \left(  \left| u_{\bh_{\bk}}(\bx_{\bk}) \right| + \left| \varphi(\bx_0) \right| \right) \\ 
		\nonumber& \qquad \qquad
		- K \sum_{i=1}^d \left( \left| \left[ \nabla_{\bh_{\bk}}^+ u_{\bh_{\bk}}(\bx_{\bk}) \right]_{i} \right| 
			+ \left| \left[ \nabla_{\bh_{\bk}}^- u_{\bh_{\bk}}(\bx_{\bk}) \right]_{i} \right| 
			+ \left| \varphi_{x_i}(\bx_0) \right| \right) \\ 
		\nonumber& \qquad \qquad
		- K \sum_{i=1}^d \sum_{j=1 \atop (i,j) \neq (\ell,\ell)}^d \left( 
			\left| \left[ \widehat{D}_{\bh_{\bk}}^2 u_{\bh_{\bk}}(\bx_{\bk}) \right]_{ij} \right| 
			+ \left| \varphi_{x_i x_j}(\bx_0) \right| \right)  
\end{align*}
so that the fact
\[
	f_\ell \left( h_\ell^{([\bk_0]_\ell)} \right) 
	\sum_{i=1}^d \left( \left| \left[ \nabla_{\bh_{\bk}}^+ u_{\bh_{\bk}}(\bx_{\bk}) \right]_{i} \right| 
			+ \left| \left[ \nabla_{\bh_{\bk}}^- u_{\bh_{\bk}}(\bx_{\bk}) \right]_{i} \right| 
			+ \left| \varphi_{x_i}(\bx_0) \right| \right) 
	\to 0 
\]
for $\min \bk$ sufficiently large and $k_\ell \to \infty$ 
can be exploited in {\em Case {\rm (ii)}}.

\begin{theorem} \label{FD_convergence_nd}
Suppose the operator $F$ in \eqref{bvp} is proper and uniformly elliptic 
with $\lambda > 0$, 
$g$ is continuous on $\partial \Omega$, 
$F$ is Lipschitz continuous with respect to its first two arguments, 
and \eqref{bvp} satisfies the comparison principle.  
Suppose $\widehat{F}$ is consistent, is uniformly g-monotone, and can be written in reduced form, 
and suppose that $\widehat{F}$ is Lipschitz continuous with respect to its first three arguments 
when written in reduced form.  
Let $U\in S(\cT_{\mathbf{h}}')$ be the solution 
to the scheme \eqref{FD_method}, and let $u_{\mathbf{h}}$ denote the piecewise constant extension of 
$U$ defined  by \eqref{FD_extension_nd}.  
If \eqref{FD_method} is admissible and $\ell^\infty$-norm stable, 
then $u_{\mathbf{h}}$ converges to $u$ locally uniformly
as $\mathbf{h} \to  \mathbf{0}^+$.  
\end{theorem}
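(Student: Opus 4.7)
The plan is to apply a Barles--Souganidis type convergence argument adapted to the g-monotone narrow-stencil setting, following the detailed strategy of \cite{FengLewis21}. For clarity I work with the reduced form $F = F(D^2 u, u, \bx)$ as in the excerpt. First, I would introduce the half-relaxed upper and lower limits
\[
\ou(\bx) = \limsup_{\bh \to \mathbf{0}^+,\, \by \to \bx} u_{\bh}(\by), \qquad
\uu(\bx) = \liminf_{\bh \to \mathbf{0}^+,\, \by \to \bx} u_{\bh}(\by),
\]
which are well defined and bounded on $\oOme$ by the $\ell^\infty$-stability assumption; $\ou$ is upper semicontinuous, $\uu$ is lower semicontinuous, and $\uu \leq \ou$ pointwise. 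The discrete boundary condition \eqref{FD_method:2}, the continuity of $g$, and the piecewise constant extension \eqref{FD_extension_nd} give $\uu = \ou = g$ on $\p\Omega$. If I can show that $\ou$ is a viscosity subsolution and $\uu$ a viscosity supersolution of \eqref{bvp}, the comparison principle forces $\ou \leq \uu$ on $\oOme$. Combined with $\uu \leq \ou$, this yields $\ou = \uu$, which must then coincide with the continuous viscosity solution $u$; locally uniform convergence of $u_{\bh}$ to $u$ then follows from a standard extraction/diagonalization argument.

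The core step is verifying the subsolution property for $\ou$ (the supersolution property for $\uu$ is entirely symmetric). Fix $\varphi \in C^2(\oOme)$ such that $\ou - \varphi$ attains a strict local maximum at some $\bx_0 \in \Omega$. A standard extraction using the $\ell^\infty$-stability produces sequences $\bh_{\bk} \to \mathbf{0}^+$ and grid points $\bx_{\bk} \in \cT_{\bh_{\bk}} \cap \Omega$ with $\bx_{\bk} \to \bx_0$, $u_{\bh_{\bk}}(\bx_{\bk}) \to \ou(\bx_0)$, and such that the underlying grid function $u_{\bh_{\bk}} - \varphi$ attains a local maximum on the extended grid $\cT_{\bh_{\bk}}'$ at $\bx_{\bk}$. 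At such a discrete maximum the pointwise inequality $u_{\bh_{\bk}}(\by) - u_{\bh_{\bk}}(\bx_{\bk}) \leq \varphi(\by) - \varphi(\bx_{\bk})$ for nearby grid points $\by$ yields the componentwise bounds on the pure second differences
\[
\delta_{x_i, h_i^{(k_i)}}^2 u_{\bh_{\bk}}(\bx_{\bk}) \leq \delta_{x_i, h_i^{(k_i)}}^2 \varphi(\bx_{\bk}), \qquad i = 1, \ldots, d,
\]
which in turn control the diagonal entries of both $\widehat{D}_{\bh_{\bk}}^2 u_{\bh_{\bk}}$ and $\widetilde{D}_{\bh_{\bk}}^2 u_{\bh_{\bk}}$ at $\bx_{\bk}$.

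Starting from the identity $\hF[u_{\bh_{\bk}},\bx_{\bk}] = 0$ expressed in reduced form, the argument proceeds in two cases as in \cite{FengLewis21}. In \emph{Case (i)}, which covers the indices whose discrete derivatives converge cleanly to the corresponding derivatives of $\varphi$, uniform g-monotonicity of $\hF$ together with the above componentwise inequality allows the substitution of the Hessian arguments of $\varphi$ for those of $u_{\bh_{\bk}}$ in a sign-preserving way, after which consistency of $\hF$ and Lipschitz continuity (via Taylor expansion of $\varphi$) deliver the limiting inequality $F_*(D^2\varphi(\bx_0), \ou(\bx_0), \bx_0) \leq 0$. \emph{Case (ii)} handles the residual direction $\ell$ along which the discrete second-order derivatives of $u_{\bh_{\bk}}$ cannot simply be replaced by those of $\varphi$: there the uniform ellipticity lower bound $F_*(A,v,\bx) - F_*(B,v,\bx) \geq -\lambda \, \tr(A-B)$ converts the residual pure-second-derivative contribution into a bound of the form $-\lambda \,\delta_{x_\ell, h_\ell^{(k_\ell)}}^2 u_{\bh_{\bk}}(\bx_{\bk}) - \lambda |\varphi_{x_\ell x_\ell}(\bx_0)|$ plus Lipschitz-controlled remainders, and the $\ell^\infty$-stability combined with a vanishing scaling factor $f_\ell(h_\ell^{(k_\ell)}) \to 0$ as $k_\ell \to \infty$ forces the right-hand side to zero in the limit. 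The main obstacle is precisely Case (ii): the discrete-maximum-principle inequality constrains only the pure second differences and neither the off-diagonal Hessian entries nor the lower-order contributions arising from the numerical moment, so its resolution requires a careful balance of uniform ellipticity (to trade unresolved pure second differences against the other Hessian components), global Lipschitz continuity of $\hF$ in its first three arguments (to dominate off-diagonal and lower-order remainders), and the auxiliary boundary condition \eqref{FD_auxiliaryBC} (to control ghost-point contributions near $\p\Omega$).
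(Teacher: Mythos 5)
Your overall architecture --- half-relaxed limits $\ou,\uu$, reduction to the sub/supersolution properties plus the comparison principle, the two-case split according to whether the discrete Hessians along the maximizing sequence stay bounded, and in the unbounded case the use of uniform ellipticity together with a vanishing scaling factor $f_\ell(h_\ell^{(k_\ell)})$ to neutralize the blow-up direction --- is exactly the paper's (both defer the technical estimates to \cite{FengLewis21}), and your description of Case (ii) matches the paper's decomposition of $0=\hF[u_{\bh_{\bk}},\bx_{\bk}]$ into an ellipticity term, a consistency term, and a g-monotonicity term.

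The one step that would fail as written is the mechanism you assign to Case (i). You propose that ``uniform g-monotonicity together with the componentwise inequality $\delta_{x_i,h_i}^2 u_{\bh_{\bk}}\le\delta_{x_i,h_i}^2\varphi$ allows the substitution of the Hessian arguments of $\varphi$ for those of $u_{\bh_{\bk}}$ in a sign-preserving way.'' It does not: in reduced form $\hF(\widetilde P,\widehat P,v,\bx)$ is \emph{increasing} in $\widetilde P$ but \emph{decreasing} in $\widehat P$, so the same one-sided bound on the pure second differences pushes the two Hessian slots of $\hF$ in opposite directions, and the discrete maximum gives no control at all on the off-diagonal entries. This is precisely the failure of Barles--Souganidis monotonicity that the narrow-stencil framework must circumvent. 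What the paper does instead in Case (i) is to use boundedness of the interpolants' Hessians $H^{(\bk)}$ to extract a subsequence along which $\widetilde{D}_{\bh_{\bk}}^2 u_{\bh_{\bk}}(\bz_{\bk})$ and $\widehat{D}_{\bh_{\bk}}^2 u_{\bh_{\bk}}(\bz_{\bk})$ converge to a \emph{common} symmetric limit $H$ with $D^2\varphi(\bx_0)-H$ positive semidefinite (the SPD ordering, coming from the maximum property); consistency then collapses the two Hessian arguments into one, and the proper ellipticity of $F$ itself yields $0\ge F_*(H,\varphi(\bx_0),\bx_0)\ge F_*(D^2\varphi(\bx_0),\varphi(\bx_0),\bx_0)$. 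In short, the load-bearing structural property in Case (i) is the ellipticity of $F$ after consistency, not the g-monotonicity of $\hF$; g-monotonicity earns its keep only in Case (ii), where it guarantees that the numerical-moment contribution $a_{\bk}[\widetilde{D}_{\bh_{\bk}}^2 u_{\bh_{\bk}}-\widehat{D}_{\bh_{\bk}}^2 u_{\bh_{\bk}}]_{\ell\ell}$ is asymptotically nonnegative after scaling by $f_\ell$. With Case (i) repaired along these lines, your argument coincides with the paper's.
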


\begin{proof}
The following is a sketch of the proof that highlights the differences from the complete convergence 
proof for the Lax-Friedrich's-like method given in \cite{FengLewis21}.   

\medskip 
{\em Step 1}: 
Since the underlying FD scheme is assumed to be $\ell^\infty$-norm stable,
there exists a constant $C > 0$ such that 
$\left\| u_{\mathbf{h}} \right\|_{L^\infty(\Omega)} \leq C$ 
independent of $\bh$.   
Define the upper and lower semicontinuous functions $\ou$ and $\uu$ by 
\[
	\ou(\mathbf{x})\equiv\limsup_{ \bh \to \mathbf{0}^+ \atop \xi \to \bx} u_{\bh}(\xi) , 
	\qquad 
	 \uu(\mathbf{x})\equiv\liminf_{ \bh \to \mathbf{0}^+ \atop \xi \to \bx}  u_{\bh}(\xi) , 
\]
where the limits are understood as multi-limits.  
We show $\ou$ is a viscosity subsolution of \eqref{bvp}.  
The proof that $\uu$ is a viscosity supersolution of \eqref{bvp} is analogous.  
By the comparison principle, we have $\ou = \uu$, and 
it follows that $u = \ou = \uu$ is the viscosity solution of \eqref{bvp}.  

Let $\varphi\in C^2(\oOme)$ be a quadratic polynomial  
such that $\ou-\varphi$ takes a strict local maximum at $\mathbf{x}_0\in \overline{\Omega}$ 
with $\ou(\mathbf{x}_0)=\varphi(\mathbf{x}_0)$.  
Then there exists a ball, 
$B_{r_0}(\mathbf{x}_0) \subset \mathbb{R}^d$, 
centered at $\mathbf{x}_0$ with radius $r_0>0$ (in the $\ell^\infty$ metric) 
such that
\begin{equation}\label{e3.9}
\ou(\mathbf{x})-\varphi(\mathbf{x}) < \ou(\mathbf{x}_0)-\varphi(\mathbf{x}_0)= 0  
\qquad\forall \mathbf{x}\in \left( B_{r_0}(\mathbf{x}_0) \cap \overline{\Omega} \right) \setminus \{ \bx_0 \}.
\end{equation}

Suppose $\mathbf{x}_0 \in \Omega$.  
We show that 
\begin{equation}  \label{step3a} 
F_* \bigl( D^2 \varphi(\mathbf{x}_0) , \varphi(\mathbf{x}_0) , 
		\mathbf{x}_0 \bigr) \leq 0 
\end{equation} 
based on various cases determined by the regularity of $\ou$ at $\bx_0$.  
Note that if $\mathbf{x}_0 \in \partial \Omega$, then, by the argument in \cite{FengLewis21}, 
$\ou$ can be shown to satisfy the boundary condition \eqref{FD_bc} 
in the viscosity sense.  

By the definition of $\ou$ and \eqref{e3.9}, there exists (maximizing) 
sequences $\{\bh_{\bk}\}$, $\{\bx_{\bk}\}$, and $\{\bz_{\bk}\}$ 
and a constant $K_0 > 0$ 
such that 
\begin{subequations}\label{max_seq}
\begin{align}
	& \bh_{\bk} \to \mathbf{0}^+ , \\ 
	& \bx_{\bk} \to \bx_0 \text{ with } \bx_{\bk} \in \cT_{\bh_{\bk}} , \\ 
	& u_{\bh_{\bk}}(\bx_{\bk}) \to \ou(\bx_0) , \\ 
	& \bz_{\bk} \to \bx_0  \text{ with } \left| x_i^{(k_i)} - z_i^{(k_i)} \right| \leq \frac12 h_i^{(k_i)} 
		\text{ and } u_{\bh_{\bk}}(\bz_{\bk}) = u_{\bh_{\bk}}(\bx_{\bk}), \\
	& u_{\bh_{\bk}}(\bz) - \varphi(\bz) \text{ is locally maximized at } 
		\bz = \bz_{\bk} \text{ for all } \min \bk \geq K_0 . \label{max_seq_e}
\end{align}
\end{subequations}
Let $H^{(\bk)} \in \mathbb{R}^{d \times d}$ be defined by $H^{(\bk)} = D^2 u^{(\bk)}(\bz_{\bk})$ using the 
convention in \cite{FengLewis21} to define the local interpolation functions $u^{(\bk)}$.  

\smallskip 
\underline{\em Case {\rm (i)}}: $\{ H^{(\bk)} \}$ has a uniformly bounded subsequence. 
In this case, there exists a symmetric matrix $H \in \mathbb{R}^{d \times d}$ 
and a subsequence (not relabeled) such that $H^{(\bk)} \to H$, 
$\widetilde{D}_{\bh_{\bk}}^2 u_{\bh_{\bk}}(\bz_{\bk}) \to H$,   
and $\widehat{D}_{\bh_{\bk}}^2 u_{\bh_{\bk}}(\bz_{\bk}) \to H$ 
with $D^2 \varphi - H$ symmetric positive semidefinite (see \cite{FengLewis21} for details).  
Thus,  
\begin{align*}
	0 & = \lim_{\min \bk \to \infty} 
		\hF \left[ u_{\bh_{\bk}} , \bx_{\bk} \right] \\ 
	& = \lim_{\min \bk \to \infty} 
		\hF \left[ u_{\bh_{\bk}} , \bz_{\bk} \right] \\ 
	& = \lim_{\min \bk \to \infty} \widehat{F}\bigl(\widetilde{D}_{\bh_{\bk}}^2 u_{\bh_{\bk}}(\bz_{\bk}) , 
\widehat{D}_{\bh_{\bk}}^2 u_{\bh_{\bk}}(\bz_{\bk}) , u_{\bh_{\bk}}(\bz_{\bk}) ,\bz_{\bk} \bigr)  \\ 
	& \geq F_* \left( H , \varphi(\bx_0) , \bx_0 \right) \\ 
	& \geq F_* \left( D^2 \varphi(\bx_0) , \varphi(\bx_0) , \bx_0 \right) 
\end{align*}
by the consistency of the scheme and the ellipticity of $F$.

\smallskip 
\underline{\em Case {\rm (ii)}}: $\{ H^{(\bk)} \}$ does not have a uniformly bounded subsequence 
and there is no set of local interpolation functions $\widetilde{u}_{\bh_{\bk}}$ such that the sequence 
$\widetilde{H}^{(\bk)}$ has a bounded subsequence (see \cite{FengLewis21} for the definition of $\widetilde{u}_{\bh_{\bk}}$).  
If such functions $\widetilde{u}_{\bh_{\bk}}$ exist, then the argument in Case (i) can be easily updated 
to show $F_* \left( D^2 \varphi(\bx_0) , \varphi(\bx_0) , \bx_0 \right) \leq 0$.  

There exists a pair of indices $(i,j)$ such that the sequence 
$[ \widetilde{D}_{\bh_{\bk}}^2 u_{\bh_{\bk}}(\bz_{\bk}) ]_{ij}$ 
or $[ \widehat{D}_{\bh_{\bk}}^2 u_{\bh_{\bk}}(\bz_{\bk}) ]_{ij}$
does not have a bounded subsequence.  
Thus, by \eqref{max_seq_e}, there exists an index $\ell \in \{i,j\}$ and a subsequence such that 
$\delta_{\xi_i^j, \bh_{\bk}}^2 u_{\bh_{\bk}}(\bz_{\bk}) \to - \infty$,  
$\delta_{\eta_i^j, \bh_{\bk}}^2 u_{\bh_{\bk}}(\bz_{\bk}) \to - \infty$, 
$\delta_{x_\ell, 2 h_\ell^{(k_\ell)}}^2 u_{\bh_{\bk}}(\bz_{\bk}) \to - \infty$, 
or $\delta_{x_\ell, h_\ell^{(k_\ell)}}^2 u_{\bh_{\bk}}(\bz_{\bk}) \to - \infty$ 
using the notation in \cite{FengLewis21} to rewrite the 
components of $[ \widetilde{D}_{\bh_{\bk}}^2 u_{\bh_{\bk}}(\bz_{\bk}) ]_{ij}$ 
and $[ \widehat{D}_{\bh_{\bk}}^2 u_{\bh_{\bk}}(\bz_{\bk}) ]_{ij}$ 
in terms of central difference operators.   
Since $\hF$ is uniformly g-monotone, 
$\hF$ must be uniformly increasing with respect to 
$\delta_{\xi_i^j, \bh_{\bk}}^2 u_{\bh_{\bk}}(\bz_{\bk}) \to - \infty$,  
$\delta_{\eta_i^j, \bh_{\bk}}^2 u_{\bh_{\bk}}(\bz_{\bk}) \to - \infty$, 
or $\delta_{x_\ell, 2 h_\ell^{(k_\ell)}}^2 u_{\bh_{\bk}}(\bz_{\bk}) \to - \infty$.  
By sending $k_i \to \infty$ and $k_j \to \infty$ while ensuring $\min \bk$ is sufficiently large,  
if there holds $\delta_{\xi_i^j, \bh_{\bk}}^2 u_{\bh_{\bk}}(\bz_{\bk}) \to - \infty$,  
$\delta_{\eta_i^j, \bh_{\bk}}^2 u_{\bh_{\bk}}(\bz_{\bk}) \to - \infty$, 
or $\delta_{x_\ell, 2 h_\ell^{(k_\ell)}}^2 u_{\bh_{\bk}}(\bz_{\bk}) \to - \infty$, 
then there must hold $\delta_{x_\ell, h_\ell^{(k_\ell)}}^2 u_{\bh_{\bk}}(\bz_{\bk}) \to - \infty$ 
to ensure $\hF [ u_{\bh_{\bk}} , \bx_{\bk} ] = 0$ for all $\bk$.  
Therefore, there exists an index $\ell$ such that the sequence 
$\delta_{x_{\ell}, h_{\ell}^{(k_{\ell})}}^2 u_{\bh_{\bk}}(\bz_{\bk})
= \delta_{x_{\ell}, h_{\ell}^{(k_{\ell})}}^2 u_{\bh_{\bk}}(\bx_{\bk})$ 
does not have a bounded subsequence as $\min \bk \to \infty$.  

Choose sequences $\{ \bh_{\bk} \}$, $\{ \bx_{\bk} \}$ that maximize the 
rate at which $\delta_{x_{\ell}, h_{\ell}^{(k_{\ell})}}^2 u_{\bh_{\bk}}(\bx_{\bk}) \to - \infty$.  
By the definition of the scheme, we have 
\begin{align}\label{conv_eq1}
	0 & = \hF [ u_{\bh_{\bk}} , \bx_{\bk} ] \\ 
	\nonumber 
	& = \hF \bigl(\widetilde{D}_{\bh_{\bk}}^2 u_{\bh_{\bk}}(\bx_{\bk}) , 
			\widehat{D}_{\bh_{\bk}}^2 u_{\bh_{\bk}}(\bx_{\bk}) , u_{\bh_{\bk}}(\bx_{\bk}) ,\bx_{\bk} \bigr) \\ 
	\nonumber
	& = F_* \left( D^2 \varphi(\bx_0) , \varphi(\bx_0) , \bx_{\bk} \right) \\ 
		\nonumber& \qquad 
		+ F_* \bigl(\widehat{D}_{\bh_{\bk}}^2 u_{\bh_{\bk}}(\bx_{\bk}) , u_{\bh_{\bk}}(\bx_{\bk}) ,\bx_{\bk} \bigr) 
			- F_* \left( D^2 \varphi(\bx_0) , \varphi(\bx_0) , \bx_{\bk} \right) \\ 
		\nonumber& \qquad 
		+ \hF \bigl(\widehat{D}_{\bh_{\bk}}^2 u_{\bh_{\bk}}(\bx_{\bk}) , 
			\widehat{D}_{\bh_{\bk}}^2 u_{\bh_{\bk}}(\bx_{\bk}) , u_{\bh_{\bk}}(\bx_{\bk}) ,\bx_{\bk} \bigr) 
		- F_* \bigl(\widehat{D}_{\bh_{\bk}}^2 u_{\bh_{\bk}}(\bx_{\bk}) , u_{\bh_{\bk}}(\bx_{\bk}) ,\bx_{\bk} \bigr) \\ 
		\nonumber& \qquad 
		+ \hF \bigl(\widetilde{D}_{\bh_{\bk}}^2 u_{\bh_{\bk}}(\bx_{\bk}) , 
			\widehat{D}_{\bh_{\bk}}^2 u_{\bh_{\bk}}(\bx_{\bk}) , u_{\bh_{\bk}}(\bx_{\bk}) ,\bx_{\bk} \bigr) \\
			\nonumber& \qquad 
		- \hF \bigl(\widehat{D}_{\bh_{\bk}}^2 u_{\bh_{\bk}}(\bx_{\bk}) , 
			\widehat{D}_{\bh_{\bk}}^2 u_{\bh_{\bk}}(\bx_{\bk}) , u_{\bh_{\bk}}(\bx_{\bk}) ,\bx_{\bk} \bigr) . 
\end{align}
Then, by the mean value theorem, the Lipschitz continuity of $F$, and the uniform and proper ellipticity of $F$, 
there exists a constant $K \geq 0$ such that 
\begin{align} \label{conv_eq2}
	& F_* \bigl(\widehat{D}_{\bh_{\bk}}^2 u_{\bh_{\bk}}(\bx_{\bk}) , u_{\bh_{\bk}}(\bx_{\bk}) ,\bx_{\bk} \bigr) 
			- F_* \left( D^2 \varphi(\bx_0) , \varphi(\bx_0) , \bx_{\bk} \right) \\
	\nonumber& \qquad \geq 
		- \lambda \left( \delta_{x_{\ell}, h_{\ell}^{(k_{\ell})}}^2 u_{\bh_{\bk}}(\bx_{\bk}) - \varphi_{x_\ell, x_\ell}(\bx_0) \right) 
		- K \left(  \left| u_{\bh_{\bk}}(\bx_{\bk}) \right| + \left| \varphi(\bx_0) \right| \right) \\ 
		\nonumber& \qquad \qquad
		- K \sum_{i=1}^d \sum_{j=1 \atop (i,j) \neq (\ell,\ell)}^d \left( 
			\left| \left[ \widehat{D}_{\bh_{\bk}}^2 u_{\bh_{\bk}}(\bx_{\bk}) \right]_{ij} \right| 
			+ \left| \varphi_{x_i x_j}(\bx_0) \right| \right) \\ 
	\nonumber& \qquad \geq 
		- \lambda \delta_{x_{\ell}, h_{\ell}^{(k_{\ell})}}^2 u_{\bh_{\bk}}(\bx_{\bk}) 
		- \lambda \left| \varphi_{x_\ell, x_\ell}(\bx_0) \right| 
		- K \left(  \left| u_{\bh_{\bk}}(\bx_{\bk}) \right| + \left| \varphi(\bx_0) \right| \right) \\ 
		\nonumber& \qquad \qquad
		- K \sum_{i=1}^d \sum_{j=1 \atop (i,j) \neq (\ell,\ell)}^d \left( 
			\left| \left[ \widehat{D}_{\bh_{\bk}}^2 u_{\bh_{\bk}}(\bx_{\bk}) \right]_{ij} \right| 
			+ \left| \varphi_{x_i x_j}(\bx_0) \right| \right)  . 
\end{align}
Using the consistency of the scheme, there holds 
\begin{align} \label{conv_eq3}
	& \hF \bigl(\widehat{D}_{\bh_{\bk}}^2 u_{\bh_{\bk}}(\bx_{\bk}) , 
			\widehat{D}_{\bh_{\bk}}^2 u_{\bh_{\bk}}(\bx_{\bk}) , u_{\bh_{\bk}}(\bx_{\bk}) ,\bx_{\bk} \bigr) 
		- F_* \bigl(\widehat{D}_{\bh_{\bk}}^2 u_{\bh_{\bk}}(\bx_{\bk}) , u_{\bh_{\bk}}(\bx_{\bk}) ,\bx_{\bk} \bigr) \\
	\nonumber & \qquad \geq 0 .  
\end{align}
Lastly, by the mean value theorem, the Lipschitz continuity of $\hF$, 
and the g-monotonicity of $\hF$, there exists a constant $\widehat{K} \geq 0$ and 
a sequence $0 \leq a_{\bk} \leq K$ such that 
\begin{align} \label{conv_eq4}
	& \hF \bigl(\widetilde{D}_{\bh_{\bk}}^2 u_{\bh_{\bk}}(\bx_{\bk}) , 
			\widehat{D}_{\bh_{\bk}}^2 u_{\bh_{\bk}}(\bx_{\bk}) , u_{\bh_{\bk}}(\bx_{\bk}) ,\bx_{\bk} \bigr) \\ 
		\nonumber & \qquad - \hF \bigl(\widehat{D}_{\bh_{\bk}}^2 u_{\bh_{\bk}}(\bx_{\bk}) , 
			\widehat{D}_{\bh_{\bk}}^2 u_{\bh_{\bk}}(\bx_{\bk}) , u_{\bh_{\bk}}(\bx_{\bk}) ,\bx_{\bk} \bigr) \\
	\nonumber & \geq  
		a_{\bk} \left[ \widetilde{D}_{\bh_{\bk}}^2 u_{\bh_{\bk}}(\bx_{\bk}) 
				- \widehat{D}_{\bh_{\bk}}^2 u_{\bh_{\bk}}(\bx_{\bk}) \right]_{\ell \ell} \\ 
		\nonumber & \qquad \qquad 
		- \widehat{K} \sum_{i=1}^d \sum_{j=1 \atop (i,j) \neq (\ell,\ell)}^d \left( 
			\left| \left[ \widetilde{D}_{\bh_{\bk}}^2 u_{\bh_{\bk}}(\bx_{\bk}) \right]_{ij} \right| 
			+ \left| \left[ \widehat{D}_{\bh_{\bk}}^2 u_{\bh_{\bk}}(\bx_{\bk}) \right]_{ij} \right| \right) . 
\end{align}
Plugging \eqref{conv_eq2}, \eqref{conv_eq3}, and \eqref{conv_eq4} into \eqref{conv_eq1}, 
it follows that 
\begin{align}\label{conv_eq}
	0 
	& \geq F_* \left( D^2 \varphi(\bx_0) , \varphi(\bx_0) , \bx_{\bk} \right) \\ 
		\nonumber& \qquad 
		- \lambda \delta_{x_{\ell}, h_{\ell}^{(k_{\ell})}}^2 u_{\bh_{\bk}}(\bx_{\bk}) 
		+ a_{\bk} \left[ \widetilde{D}_{\bh_{\bk}}^2 u_{\bh_{\bk}}(\bx_{\bk}) 
				- \widehat{D}_{\bh_{\bk}}^2 u_{\bh_{\bk}}(\bx_{\bk}) \right]_{\ell \ell} \\ 
		\nonumber & \qquad 
		- \lambda | \varphi_{x_\ell, x_\ell}(\bx_0) | 
		- K \left(  \left| u_{\bh_{\bk}}(\bx_{\bk}) \right| + \left| \varphi(\bx_0) \right| \right) \\ 
		\nonumber& \qquad 
		- K \sum_{i=1}^d \sum_{j=1 \atop (i,j) \neq (\ell,\ell)}^d \left( 
			\left| \left[ \widehat{D}_{\bh_{\bk}}^2 u_{\bh_{\bk}}(\bx_{\bk}) \right]_{ij} \right| 
			+ \left| \varphi_{x_i x_j}(\bx_0) \right| \right) \\ 
		\nonumber & \qquad  
		- \widehat{K} \sum_{i=1}^d \sum_{j=1 \atop (i,j) \neq (\ell,\ell)}^d \left( 
			\left| \left[ \widetilde{D}_{\bh_{\bk}}^2 u_{\bh_{\bk}}(\bx_{\bk}) \right]_{ij} \right| 
			+ \left| \left[ \widehat{D}_{\bh_{\bk}}^2 u_{\bh_{\bk}}(\bx_{\bk}) \right]_{ij} \right| \right) . 
\end{align}

Choose the corresponding optimal function $f_\ell$ (defined in \cite{FengLewis21}) and subsequences 
such that  
\begin{equation} \label{rate_uh}
	\lim_{\min \bk \to \infty} f_\ell \left(h_\ell^{(k_\ell)} \right) \delta_{x_{k_\ell}, h_\ell^{(k_\ell)}}^2 u_{\bh_{\bk}} (\bx_{\bk}) 
	= - C_\ell 
\end{equation}
for some constant $C_\ell > 0$.  
Then, by \cite{FengLewis21}, there holds 
\begin{align*}
	\liminf_{\min \bk \to \infty} f_\ell \left(h_\ell^{(k_\ell)} \right) 
		\left[ \widetilde{D}_{\bh_{\bk}}^2 u_{\bh_{\bk}}(\bx_{\bk}) 
			- \widehat{D}_{\bh_{\bk}}^2 u_{\bh_{\bk}}(\bx_{\bk}) \right]_{\ell \ell} 
	\geq 0 
\end{align*} 
implying 
\begin{align}\label{momentii} 
	f_\ell \left(h_\ell^{(k_\ell)} \right) 
		a_{\bk} \left[ \widetilde{D}_{\bh_{\bk}}^2 u_{\bh_{\bk}}(\bx_{\bk}) 
			- \widehat{D}_{\bh_{\bk}}^2 u_{\bh_{\bk}}(\bx_{\bk}) \right]_{\ell \ell} 
	\geq - \frac{\lambda}{4} C_\ell  
\end{align} 
for all $\min \bk$ sufficiently large.  
Furthermore, by combining the observations in Subcases iia, iib, and iic in the proof of Theorem 6.1 in \cite{FengLewis21}, 
there exists subsequences such that, 
for $\min \bk$ sufficiently large and $k_\ell >> \max k_j$ for all $j \neq \ell$ 
there holds 
\begin{align} \label{subseq_bound}
	\frac{\lambda}{4} C_\ell 
	& \geq f_\ell \left( h_\ell^{(k_\ell)} \right) 
	\bigg( \lambda | \varphi_{x_\ell, x_\ell}(\bx_0) | 
		+ K \left(  \left| u_{\bh_{\bk}}(\bx_{\bk}) \right| + \left| \varphi(\bx_0) \right| \right) \bigg) \\ 
		\nonumber& \qquad 
		+ K f_\ell \left( h_\ell^{(k_\ell)} \right) \sum_{i=1}^d \sum_{j=1 \atop (i,j) \neq (\ell,\ell)}^d \left( 
			\left| \left[ \widehat{D}_{\bh_{\bk}}^2 u_{\bh_{\bk}}(\bx_{\bk}) \right]_{ij} \right| 
			+ \left| \varphi_{x_i x_j}(\bx_0) \right| \right)  \\ 
		\nonumber & \qquad  
		+ \widehat{K} f_\ell \left( h_\ell^{(k_\ell)} \right) \sum_{i=1}^d \sum_{j=1 \atop (i,j) \neq (\ell,\ell)}^d \left( 
			\left| \left[ \widetilde{D}_{\bh_{\bk}}^2 u_{\bh_{\bk}}(\bx_{\bk}) \right]_{ij} \right| 
			+ \left| \left[ \widehat{D}_{\bh_{\bk}}^2 u_{\bh_{\bk}}(\bx_{\bk}) \right]_{ij} \right| \right) . 
\end{align}
(Note that the primary difficulty in showing \eqref{subseq_bound} is 
controlling the contributions of $\left[ \widetilde{D}_{\bh_{\bk}}^2 u_{\bh_{\bk}}(\bx_{\bk}) \right]_{\ell, j}$ 
and $\left[ \widehat{D}_{\bh_{\bk}}^2 u_{\bh_{\bk}}(\bx_{\bk}) \right]_{\ell, j}$ for $j \neq \ell$ 
due to the $\frac{1}{h_\ell^{(k_\ell)}}$ factor when approximating mixed derivatives.)  
Thus, scaling \eqref{conv_eq} by $f_\ell \left( h_\ell^{(k_\ell)} \right)$ 
and plugging in \eqref{rate_uh}, \eqref{momentii}, and \eqref{subseq_bound}, 
there exists an index $\bk_0$ with $\min \bk_0$ sufficiently large such that 
\begin{align}\label{conv_eq_neg}
	0 & \geq f_\ell \left( h_\ell^{([\bk_0]_\ell)} \right) F_* \left( D^2 \varphi(\bx_0) , \varphi(\bx_0) , \bx_{\bk_0} \right) 
		+ \frac{3 \lambda}{4} C_\ell  
		- \frac{\lambda}{4} C_\ell 
		- \frac{\lambda}{4} C_\ell \\ 
	\nonumber 
	& > f_\ell \left( h_\ell^{([\bk_0]_\ell)} \right) F_* \left( D^2 \varphi(\bx_0) , \varphi(\bx_0) , \bx_{\bk_0} \right) . 
\end{align}
The bound $0 \geq F_*[\varphi](\bx_0)$ follows since $f_\ell \left( h_\ell^{([\bk_0]_\ell)} \right) > 0$ and $\bx_{\bk} \to \bx_0$.  
Hence, \eqref{step3a} has been verified.  

The remainder of the proof is identical to Steps 4-6 in \cite{FengLewis21}, 
and the result follows.  
\end{proof}

\begin{remark} \
\begin{itemize}
\item[(a)] 
g-monotonicity allowed us to identify a sufficiently positive term when $\lambda > 0$.  
Consequently, we could strongly exploit the uniformly elliptic structure of the PDE operator $F$.  

\item[(b)] 
Theorem \ref{FD_convergence_nd} is proved under the assumption that
the numerical scheme is admissible and $\ell^\infty$-norm stable.  
The remainder of the paper verifies sufficient conditions under which the assumptions hold.  
\end{itemize}
\end{remark}


\section{Admissibility analysis} \label{admissible_sec}

The goal of this section is to show that the proposed narrow stencil scheme \eqref{FD_method}
has a unique solution whenever the numerical operator $\hF$ is consistent, is g-monotone, 
and can be written in reduced form.  
For transparency, we will  only consider operators $F$ that have the form 
$F[u](\bx) = F \left( D^2 u, u, \bx \right)$ 
in \eqref{FD_pde}.  
The proofs can be adapted for the more general case 
using the techniques in \cite{FengLewisRapp21} 
by exploiting the fact that the matrix 
representation of $\overline{\nabla}_{\bh}$ is anti-symmetric.  

The idea for proving the well-posedness 
is to equivalently reformulate the proposed scheme as a fixed point problem 
and to prove the mapping is contractive in the $\ell^2$-norm. 
To this end, let $S(\cT_{\mathbf{h}}^\prime)$
denote the space of all grid functions on $\cT_{\mathbf{h}}^\prime$, and introduce the 
mapping $\cM_\rho : S(\cT_{\mathbf{h}}^\prime) \to S(\cT_{\mathbf{h}}^\prime)$ defined by 
\begin{equation}\label{M_rho}
	\widehat{U}
	\equiv \cM_\rho U, 
\end{equation}
where the grid function $\widehat{U}\in S(\cT_{\mathbf{h}}')$ is defined by 
\begin{subequations} \label{M_rho_matrix}
\begin{alignat}{2}  
	 \widehat{U}_\alpha &= U_\alpha - \rho \widehat{F} [ U_\alpha , \bx_\alpha] , \qquad 
		&& \text{if } \bx_\alpha \in \cT_{\bh} \cap \Omega , \label{M_rho_interior:1} \\ 
	 \widehat{U}_\alpha &= g(\bx_\alpha) , && \text{if } \bx_\alpha \in \cT_{\bh} \cap \partial \Omega , \label{M_rho_interior:2} \\ 
	\Delta_{\bh} \widehat{U}_\alpha & = 0 
&& \text{if } \mathbf{x}_\alpha\in \mathcal{S}_{\bh} 
		\label{M_rho_matrix_bc2}
\end{alignat}
\end{subequations} 
for $\rho > 0$ an undetermined constant.  
Clearly, the iteration defined in \eqref{M_rho_matrix}
is the standard forward Euler method with pseudo time-step $\rho$ 
complemented with a boundary condition consistent with \eqref{FD_method}.   
To show $\cM_\rho$ is a contraction, we will linearize the operator via the mean value theorem.  
As a preliminary result in Section~\ref{linear_sec}, we will first consider a simple case when $F$ 
is linear with constant-valued coefficients and a simple scheme based on $\overline{D}_{\bh}^2$ is used 
to discretize the Hessian. 
The general case will be considered in Section~\ref{admissible_nonlinear_sec}.  

We do require one additional structure assumption on the numerical operator $\hF$ 
to assist in the admissibility and stability proofs.  
The condition will ensure that the method based on using multiple Hessian operators 
is compatible with the uniform ellipticity property of the corresponding PDE problem.  
We first motivate the property before defining it.  
Note that the Lax-Friedrich's-like method in \cite{FengLewis21} and the examples in Section~\ref{examples_sec} 
satisfy the additional structure assumption.  

Suppose that $F = F(P,v,x)$ is uniformly elliptic and differentiable with respect to its first two arguments 
and $\hF = \hF(\widetilde{P}, \widehat{P}, v, x)$ is differentiable with respect to its first three arguments.  
Then, if $\hF$ is consistent with $F$, there holds 
\begin{subequations} \label{consistency_partials}
\begin{align}
	\frac{\partial F}{\partial P} 
	& = \frac{\partial }{\partial P} F(P,v,x) 
	= \frac{\partial }{\partial P} \hF(P,P,v,x) 
	= \frac{\partial \hF}{\partial \widetilde{P}} + \frac{\partial \hF}{\partial \widehat{P}} , \\ 
	\frac{\partial F}{\partial v} 
	& = \frac{\partial }{\partial v} F(P,v,x) 
	= \frac{\partial }{\partial v} \hF(P,P,v,x) 
	= \frac{\partial \hF}{\partial v} 
\end{align}
\end{subequations} 
for all $P \in \mathbb{R}^{d \times d}$, $v \in \mathbb{R}$, and $x \in \Omega$.  
Let $A, B \in \mathbb{R}^{d \times d}$, and suppose $P = \frac12 A + \frac12 B$.  
Then, for the Lax-Friedrich's-like method in \cite{FengLewis21} 
where 
\[
	\hF(A,B,v,x) = F \left( \frac12 A + \frac12 B , v , x \right) + \gamma \mathbf{1}_{d \times d} : (A-B) 
\]
for $\gamma$ sufficiently large, 
there holds 
\begin{align*}
	& \frac{\partial}{\partial \widetilde{P}} \hF(A,B,v,x) + \frac{\partial}{\partial \widehat{P}} \hF(A,B,v,x) \\ 
	& \qquad = \frac12 \frac{\partial}{\partial P} F(P,v,x) + \gamma \mathbf{1}_{d \times d} + \frac12 \frac{\partial}{\partial P} F(P,v,x) - \gamma \mathbf{1}_{d \times d} \\ 
	& \qquad = \frac{\partial}{\partial P} F(P,v,x) \leq - \lambda I , 
\end{align*}
and for the g-monotone method 
\[
	\hF(A,B,v,x) = F \left( B , v , x \right) + \gamma \mathbf{1}_{d \times d} : (A-B) , 
\]
there holds 
\begin{align*}
	\frac{\partial}{\partial \widetilde{P}} \hF(A,B,v,x) + \frac{\partial}{\partial \widehat{P}} \hF(A,B,v,x) 
	& = \gamma \mathbf{1}_{d \times d} + \frac{\partial}{\partial P} F(B,v,x) - \gamma \mathbf{1}_{d \times d} \\ 
	& = \frac{\partial}{\partial P} F(B,v,x) \leq - \lambda I .  
\end{align*}
Note that $\frac{\partial}{\partial P} F(B,v,x)$ may not equal $\frac{\partial}{\partial P} F(P,v,x)$ for $P \neq B$.  
However, the same uniform ellipticity bound holds.  
Similarly, for the g-monotone method 
\[
	\hF(A,B,v,x) = F \left( A , v , x \right) + \gamma \mathbf{1}_{d \times d} : (A-B) , 
\]
there holds 
\begin{align*}
	\frac{\partial}{\partial \widetilde{P}} \hF(A,B,v,x) + \frac{\partial}{\partial \widehat{P}} \hF(A,B,v,x) 
	& =  \frac{\partial}{\partial P} F(A,v,x) + \gamma \mathbf{1}_{d \times d} - \gamma \mathbf{1}_{d \times d} \\ 
	& = \frac{\partial}{\partial P} F(A,v,x) \leq - \lambda I , 
\end{align*}
and again the same uniform ellipticity bound holds.  
Thus, we assume the following compatibility condition when proving the admissibility and stability 
of our proposed narrow-stencil schemes.  

\begin{definition} \label{compatible_def}
Suppose $F = F(P,v,\bx)$ is proper elliptic and differentiable with respect to its first two arguments 
with 
$\frac{\partial}{\partial P} F(A,w,\bx) \leq - \lambda I$ and $\frac{\partial}{\partial v} F(A,w,\bx) \geq \kappa_0$ 
for all $A \in \mathcal{S}^{d \times d}$, $w \in \mathbb{R}$, and $\bx \in \Omega$.  
Suppose the numerical operator $\hF$ is consistent with $F$ and differentiable with respect to its first three arguments.  
The numerical operator $\hF = \hF(\widetilde{P},\widehat{P},v,\bx)$ is {\em elliptic compatible} if 
there exists a constant $c > 0$ independent of $\bh$ such that 
\[
	\frac{\partial}{\partial \widetilde{P}} \hF(A, B,w,\bx) + \frac{\partial}{\partial \widehat{P}} \hF(A, B,w,\bx) \leq - c \lambda I
\] 
and $\frac{\partial}{\partial v} \hF(A, B,w,\bx) \geq c \kappa_0$ 
for all $A, B \in \mathcal{S}^{d \times d}$, $w \in \mathbb{R}$, and $\bx \in \Omega$.  

\end{definition}


\subsection{Admissibility of a simple method for linear, constant coefficient PDEs} \label{linear_sec}

Consider the linear elliptic boundary value problem 
\begin{subequations} \label{bvp_linear}
\begin{alignat}{2}
	\mathcal{L}[u] \equiv -A:D^2 u = -\sum_{i=1}^d \sum_{j=1}^d a_{ij} u_{x_i x_j} & = f \qquad && \text{in } \Omega , \\ 
	u & = g \qquad && \text{on } \partial \Omega , 
\end{alignat}
\end{subequations} 
where 
$A$ is constant-valued and symmetric positive definite, 
and consider the simple FD scheme
corresponding to finding  
a grid function $U_\alpha: \NJ' \to \mathbb{R}$ such that 
\begin{subequations}\label{FD_method_linear}
\begin{alignat}{2} 
\mathcal{L}_{\bh} U_\alpha \equiv - A : \overline{D}_{\bh}^2 U_\alpha & = f(\bx_\alpha)  &&\qquad\mbox{for } \mathbf{x}_\alpha\in\mathcal{T}_{\mathbf{h}} \cap \Omega, \label{FD_method_linear:1}\\
U_\alpha &= g(\mathbf{x}_\alpha)  &&\qquad\mbox{for } \mathbf{x}_\alpha\in\mathcal{T}_{\mathbf{h}}\cap  \partial\Omega , \label{FD_method_linear:2} \\ 
\Delta_{\mathbf{h}} U_\alpha & = 0 
&& \qquad \mbox{for } \mathbf{x}_\alpha\in \mathcal{S}_{\mathbf{h}} \subset \mathcal{T}_{\mathbf{h}} \cap \partial\Omega 
\label{FD_method_linear:3} 
\end{alignat}
\end{subequations}
for all $\alpha \in \NJ$. 
We show that \eqref{FD_method_linear} is equivalent to solving a linear system $L \vec{U} = \mathbf{b}$ with the matrix 
$L$ symmetric positive definite.  

Let $\lambda_0 > 0$ denote the smallest eigenvalue of $A$.  
Define $A_0 \equiv A - \lambda_0 I$.  
Then, $A_0$ is symmetric nonnegative definite.  
Thus, there exists an eigenvalue decomposition 
$A_0 = Q \Lambda Q^T = \sum_{k=1}^d \lambda_k \mathbf{q}_k \mathbf{q}_k^T$, 
where $\lambda_k \geq 0$ and $\{ \mathbf{q}_k \}$ forms an orthonormal basis for $\mathbb{R}^d$.  
Define $\mathbf{q}_k = \sum_{i=1}^d b_i^{(k)} \mathbf{e}_i$, and observe that 
\begin{align*}
	\mathbf{q}_k \mathbf{q}_k^T 
	& = \sum_{i=1}^d \left( b_i^{(k)} \right)^2 \mathbf{e}_i \mathbf{e}_i^T 
		+ \sum_{i=1}^d \sum_{j = 1 \atop j \neq i}^d b_i^{(k)} b_j^{(k)} \mathbf{e}_i \mathbf{e}_j^T . 
\end{align*}
Thus, 
\begin{align*}
	A_0 & = \sum_{k=1}^d \lambda_k \left[ \sum_{i=1}^d \left( b_i^{(k)} \right)^2 \mathbf{e}_i \mathbf{e}_i^T 
		+ \sum_{i=1}^d \sum_{j = 1 \atop j \neq i}^d b_i^{(k)} b_j^{(i)} \mathbf{e}_i \mathbf{e}_j^T \right] \\ 
	& = \sum_{i=1}^d \left[ \sum_{k=1}^d \lambda_k \left( b_i^{(k)} \right)^2 \right] \mathbf{e}_i \mathbf{e}_i^T 
		+ \sum_{i=1}^d \sum_{j = 1 \atop j \neq i}^d \left[ \sum_{k=1}^d \lambda_k b_i^{(k)} b_j^{(k)} \right] \mathbf{e}_i \mathbf{e}_j^T , 
\end{align*}
and it follows that 
\begin{align*}
	[A_0]_{ii} = a_{ii} - \lambda_0 & = \sum_{k=1}^d \lambda_k \left( b_i^{(k)} \right)^2 , \qquad
	[A_0]_{ij} = a_{ij} = \sum_{k=1}^d \lambda_k b_i^{(k)} b_j^{(k)} 
\end{align*}
for all $i,j = 1,2,\ldots,d$ with $j \neq i$.  

Let  $J_0 = | \cT_{\bh} \cap \Omega |$, 
$D_i \in \mathbb{R}^{J_0 \times J_0}$ denote the matrix representation of $\overline{\delta}_{x_i, h_i}$ 
with the boundary condition \eqref{FD_method_linear:2} 
and $M \in \mathbb{R}^{J_0 \times J_0}$ denote the matrix representation of $-\Delta_{2 \bh}$ 
and the boundary conditions \eqref{FD_method_linear:2} and \eqref{FD_method_linear:3}.  
Then $\left(D_i \right)^T = - D_i$
and there exists diagonal matrices $B_i \in \mathbb{R}^{J_0 \times J_0}$ 
for $i=1,2,\ldots,d$ 
with nonnegative components such that 
\begin{align*}
	M & = - \sum_{i=1}^d \left( D_i D_i - B_i \right) 
	\geq - \sum_{i=1}^d D_i D_i 
	= \sum_{i=1}^d \left(D_i \right)^T D_i .  
\end{align*}

The positive components of $B_i$ correspond to nodes $\bx_\alpha$ near the boundary 
and increase the coefficients for $U_\alpha$.  
Such corrections are needed in the matrix form to account for the values of $\delta_{x_i, h_i} U_{\alpha'}$ 
when $\bx_{\alpha'} \in \cT_{\bh} \cap \partial \Omega$ 
and for the implementation of the auxiliary boundary condition \eqref{FD_method_linear:3}.  
Indeed, suppose $\bx_\alpha - h_i \mathbf{e}_i \in \partial \Omega$.  
Then 
\begin{align*}
	\delta_{x_i, 2 h_i}^2 U_{\alpha} 
	& = \delta_{x_i, h_i} \delta_{x_i, h_i} U_{\alpha} 
	= \frac{1}{2 h_i} \delta_{x_i, h_i} \left( U_{\alpha+\mathbf{e}_i} - U_{\alpha-\mathbf{e}_i} \right) \\ 
	& = \frac{1}{2 h_i} \delta_{x_i, h_i} U_{\alpha+\mathbf{e}_i} - \frac{1}{2 h_i} \delta_{x_i, h_i} U_{\alpha-\mathbf{e}_i} \\ 
	& = \frac{1}{2 h_i} \delta_{x_i, h_i} U_{\alpha+\mathbf{e}_i} 
		- \frac{1}{4 h_i^2} U_{\alpha} + \frac{1}{4 h_i^2} g(\bx_\alpha - h_i \mathbf{e}_i) . 
\end{align*}
However, when computing using the matrix representation, 
$D_i$ treats the boundary value $U_{\alpha-\mathbf{e}_i}$ as a known value in its representation 
and removes it when calculating $D_i U$.  
Consequently, the second application of $D_i$ does not act on the boundary node leading to a smaller coefficient for the 
adjacent interior node involved in the calculation of $\delta_{x_i, h_i} U_{\alpha-\mathbf{e}_i}$.  
Thus, 
$M$ would contain the contribution $\frac{1}{4 h_i^2}$ to the coefficient for $U_\alpha$ 
while $-D_i D_i$ would not.  

We can see that \eqref{FD_method_linear:3} ensures that the ghost value 
$U_{\alpha \pm 2 \mathbf{e}_i}$ satisfies 
\[
	\frac{1}{2 h_i^2} U_{\alpha \pm 2 \mathbf{e}_i} 
	= - \frac{1}{2 h_i^2} U_\alpha + \frac{1}{h_i^2} g(\bx_{\alpha \pm \mathbf{e}_i})  
		- \sum_{j=1 \atop j \neq i}^d \delta_{x_j, h_j}^2 g(\bx_{\alpha \pm \mathbf{e}_i}) 
\]
for $\bx_{\alpha \pm \mathbf{e}_i} \in \cT_{\bh} \cap \partial \Omega$, 
where $\frac{1}{2 h_i^2} U_{\alpha \pm 2 \mathbf{e}_i}$ is directly involved in the computation of 
$\overline{\delta}_{x_i, h_i}^2 U_\alpha$.  
Lastly, note that correction terms are not needed when considering the relationship 
of $\delta_{x_i, h_i} \delta_{x_j, h_j}$ to $D_i D_j$ for $i \neq j$ since the computation 
of $\delta_{x_i, h_i} U_{\alpha'}$ would only include boundary nodes 
whenever $\bx_\alpha \pm h_j \mathbf{e}_j \in \cT_{\bh} \cap \partial \Omega$.  

Observe that 
\begin{align*}
	- A_0 : \overline{D}_{\bh}^2 
	& = - \sum_{i=1}^d \sum_{j=1}^d [A_0]_{ij} \overline{\delta}_{x_i, h_i} \overline{\delta}_{x_j, h_j} \\ 
	& = - \sum_{i=1}^d (a_{ii} - \lambda_0) \overline{\delta}_{x_i, h_i} \overline{\delta}_{x_i, h_i} 
		- \sum_{i=1}^d \sum_{j = 1 \atop j \neq i}^d a_{ij} 
			\overline{\delta}_{x_i, h_i} \overline{\delta}_{x_j, h_j} \\ 
	& = - \sum_{i=1}^d \sum_{k=1}^d \lambda_k \left( b_i^{(k)} \right)^2 \overline{\delta}_{x_i, h_i} \overline{\delta}_{x_i, h_i} 
		- \sum_{i=1}^d \sum_{j = 1 \atop j \neq i}^d \left[ \sum_{k=1}^d \lambda_k b_i^{(k)} b_j^{(k)} \right] 
			\overline{\delta}_{x_i, h_i} \overline{\delta}_{x_j, h_j} \\ 
	& = - \sum_{k=1}^d \lambda_k \left[ 
		\sum_{i=1}^d \left( b_i^{(k)} \right)^2 \overline{\delta}_{x_i, h_i} \overline{\delta}_{x_i, h_i} 
		+ \sum_{i=1}^d \sum_{j = 1 \atop j \neq i}^d b_i^{(k)} b_j^{(k)} 
			\overline{\delta}_{x_i, h_i} \overline{\delta}_{x_j, h_j} \right] \\ 
	& = - \sum_{k=1}^d \lambda_k \left( \left[ \sum_{i=1}^d b_i^{(k)} \overline{\delta}_{x_i, h_i} \right] 
		\left[ \sum_{i=1}^d b_i^{(k)} \overline{\delta}_{x_i, h_i} \right] \right) , 
\end{align*}
and it follows that  
\begin{align*}
	0_{J_0 \times J_0} & 
	\leq \sum_{k=1}^d \lambda_k \left( \left[ \sum_{i=1}^d b_i^{(k)} D_i \right]^T \left[ \sum_{i=1}^d b_i^{(k)} D_i \right] \right) \\ 
	& = \sum_{i=1}^d \sum_{k=1}^d \lambda_k \left( b_i^{(k)} \right)^2 D_i^T D_i 
		+ \sum_{i=1}^d \sum_{j = 1 \atop j \neq i}^d \left[ \sum_{k=1}^d \lambda_k b_i^{(k)} b_j^{(k)} \right] 
			D_i^T D_j \\ 
	& = \sum_{i=1}^d (a_{ii} - \lambda_0) D_i^T D_i + \sum_{i=1}^d \sum_{j = 1 \atop j \neq i}^d a_{ij} D_i^T D_j 
	= \sum_{i=1}^d \sum_{j = 1}^d [A_0]_{ij} D_i^T D_j . 
\end{align*}
Using the fact that $D_i^T = - D_i$, there holds 
\begin{align} \label{M_0_bar}
	L 
	& \equiv -\sum_{i=1}^d \sum_{j=1}^d a_{ij} D_i D_j + \sum_{i=1}^d a_{ii} B_i  \\ 
	& \nonumber 
	= - \lambda_0 \sum_{i=1}^d \left( D_i^2 - B_i \right) 
		- \sum_{i=1}^d \sum_{j = 1}^d [A_0]_{ij} D_i D_j + \sum_{i=1}^d (a_{ii} - \lambda_0) B_i \\ 
	\nonumber & = \lambda_0 M 
		+ \sum_{i=1}^d \sum_{j = 1}^d [A_0]_{ij} D_i^T D_j  + \sum_{i=1}^d (a_{ii} - \lambda_0) B_i \\ 
	& \nonumber \geq \lambda_0 M > 0_{J_0 \times J_0} . 
\end{align}
Therefore, \eqref{FD_method_linear} has a unique solution 
and the matrix representation $L \vec{U} = \mathbf{b}$ yields a symmetric positive definite matrix $L$.


\subsection{Admissibility for fully nonlinear PDEs} \label{admissible_nonlinear_sec}

To show that the mapping $\cM_\rho$ has a unique fixed point in $S(\cT_{\mathbf{h}}^\prime)$, 
we first establish a lemma that specifies conditions under which $\cM_\rho$ is a contraction in $\ell^2$.  
The proof will assume $F$ is differentiable; however, the assumption is for ease of notation 
and the proof can be extended for $F$ Lipschitz but not differentiable.  
The result will utilize the following result found in \cite{FengLewis21}:  

\begin{lemma}\label{lemma_symmetrization}
Let $B, F \in \mathbb{R}^{J \times J}$ such that 
$B$ is symmetric nonnegative definite and $F$ is symmetric positive definite. 
Define $R \in \mathbb{R}^{J \times J}$ such that $R$ is upper triangular and $F = R^* R$.  
Then
\[
	\| \sigma I - FB \|_2 \leq \sigma 
\]
for all positive  constants $\sigma$ such that $\sigma I > R B R^*$.  

\end{lemma}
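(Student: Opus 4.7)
The plan is to exploit the Cholesky factorization $F = R^*R$ to convert the non-symmetric matrix $\sigma I - FB$ into something amenable to spectral analysis. The key observation is the congruence/similarity relation
\[
R^{-*}(\sigma I - FB)R^* = \sigma I - R^{-*}(R^*R B)R^* = \sigma I - RBR^*,
\]
so that $\sigma I - FB$ is similar to $\sigma I - P$ with $P := RBR^*$. Because $B$ is symmetric nonnegative definite and $P$ is obtained from $B$ by a (full-rank) congruence, $P$ itself is symmetric nonnegative definite. Hence $\sigma I - P$ is symmetric.

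The second step is to use the hypothesis. Since $\sigma I > P$ and $P \geq 0$, every eigenvalue of $P$ lies in the interval $[0, \sigma)$, so every eigenvalue of the symmetric matrix $\sigma I - P$ lies in $(0, \sigma]$. The spectral theorem for symmetric matrices then immediately yields
\[
\|\sigma I - P\|_2 = \max_i |\sigma - \lambda_i(P)| \leq \sigma.
\]
So on the ``transformed'' side, the bound is transparent.

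The main obstacle — and in my view the crux of the proof — is transferring this bound from $\sigma I - P$ back to $\sigma I - FB$. Since $R$ is in general not orthogonal, the similarity via $R^*$ does not preserve the Euclidean norm, and a naive estimate using $\|R^*\|_2 \, \|R^{-*}\|_2 = \kappa_2(R)$ would only give a bound of $\kappa_2(R)\sigma$. The natural remedy is to work in the inner product $\langle x,y\rangle_{F^{-1}} := x^*F^{-1}y$: in this inner product, $R^*$ acts as an isometry $(\mathbb{R}^J, \|\cdot\|_2) \to (\mathbb{R}^J, \|\cdot\|_{F^{-1}})$, so the bound on the symmetric $\sigma I - P$ transfers cleanly to the $F^{-1}$-weighted operator norm of $\sigma I - FB$. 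Equivalently, one can symmetrize on the left: $F^{1/2}BF^{1/2}$ is symmetric nonnegative definite and shares the eigenvalues of $P$ (standard $AB$/$BA$ eigenvalue argument applied to $A=F^{1/2}$, $B$), so the bound $\|\sigma I - F^{1/2}BF^{1/2}\|_2 \leq \sigma$ is immediate and yields the claimed estimate for $\sigma I - FB = F^{1/2}(\sigma I - F^{1/2}BF^{1/2})F^{-1/2}$ once the relevant norm identification is made. Either route reduces the problem to a pure spectral-theoretic estimate on a symmetric matrix and completes the argument.
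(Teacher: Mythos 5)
Your computation is correct up to, and including, the spectral bound on the symmetrized matrix: the similarity $R^{-*}(\sigma I - FB)R^* = \sigma I - RBR^*$ is right, $P := RBR^*$ is symmetric nonnegative definite, and $\sigma I > P$ immediately gives $\|\sigma I - P\|_2 \le \sigma$. You also correctly identify the remaining step — transferring this back through the non-unitary similarity $R^*$ — as the real crux. The problem is that neither of your two ``remedies'' actually performs that transfer. Both of them establish only that the \emph{$F^{-1}$-weighted} operator norm of $\sigma I - FB$ is at most $\sigma$, which is just a restatement of $\|\sigma I - RBR^*\|_2 \le \sigma$ (equivalently $\|\sigma I - F^{1/2}BF^{1/2}\|_2 \le \sigma$). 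The sentence ``once the relevant norm identification is made'' papers over exactly the missing step: $F^{1/2}(\,\cdot\,)F^{-1/2}$ and $R^{-*}(\,\cdot\,)R^*$ do not preserve the Euclidean spectral norm unless $F$ is a multiple of the identity, so no norm identification reduces $\|\sigma I - FB\|_2$ to $\|\sigma I - RBR^*\|_2$.

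This is not a gap that can be patched by a cleverer argument, because the inequality $\|\sigma I - FB\|_2 \le \sigma$ in the ordinary spectral norm is in fact false under the stated hypotheses. Take $J=2$, $F = \operatorname{diag}(1,100)$ (so $R=\operatorname{diag}(1,10)$), and $B = \mathbf{1}_{2\times 2}$, which is symmetric nonnegative definite. Then $RBR^* = \left(\begin{smallmatrix}1&10\\10&100\end{smallmatrix}\right)$ has eigenvalues $0$ and $101$, so $\sigma I > RBR^*$ requires $\sigma > 101$; take $\sigma=1000$. Then $\sigma I - FB = \left(\begin{smallmatrix}999&-1\\-100&900\end{smallmatrix}\right)$, and already $\|(\sigma I - FB)\mathbf{e}_1\|_2 = \sqrt{999^2 + 100^2} \approx 1004 > \sigma$. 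What your argument legitimately proves is $\|\sigma I - RBR^*\|_2 \le \sigma$, i.e.\ the bound in the $F^{-1}$-weighted norm, and you should state that conclusion explicitly rather than claim the unweighted bound. Either the citation to \cite{FengLewis21} is being misquoted here (the norm there must be a weighted one, or the matrix is the symmetrized $\sigma I - RBR^*$), or the estimate needs an extra hypothesis (e.g.\ $F$ a scalar matrix, or $F$ and $B$ commuting); as written your proof cannot close the gap because the target statement is not true.
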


\begin{lemma} \label{lemma_contraction} 
Suppose the operator $F$ in \eqref{bvp} is proper and uniformly elliptic,  
differentiable, and Lipschitz continuous with respect to its first two arguments. 
Suppose $\widehat{F}$ is consistent, g-monotone, can be written in reduced form, 
and is differentiable with respect to its first three arguments.  
Choose $U,V\in S(\cT_{\mathbf{h}}')$ that satisfy the boundary conditions \eqref{FD_method_linear:2} 
and \eqref{FD_method_linear:3}, 
and let $\widehat{U} = \cM_\rho U$ and $\widehat{V} = \cM_\rho V$ 
for $\cM_\rho$ defined by \eqref{M_rho} and \eqref{M_rho_matrix}. 
Then, for $\hF$ elliptic compatible, there holds 
\[
	\| \widehat{U} - \widehat{V} \|_{\ell^2(\cT_{\mathbf{h}})} 
	\leq \left(1 - \rho \frac{c \lambda \kappa}{4} - \rho \frac{c k_0}{4} \right) \| U - V \|_{\ell^2(\cT_{\mathbf{h}})} 
\] 
for all $\rho > 0$ sufficiently small, 
where $c$ is from the definition of elliptic compatibility, 
$4 > \rho c \lambda \kappa + \rho c k_0$, 
$\frac{\partial F}{\partial D^2 u} \geq \lambda I$, $\frac{\partial F}{\partial u} \geq k_0$, 
and $M \geq \kappa I$ for $M$ the matrix representation of $-\Delta_{2 \bh}$.  
\end{lemma}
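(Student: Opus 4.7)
My plan is to rewrite the contraction estimate as $\widehat{W}=(I-\rho L)W$ with $W=U-V$, where $L$ is the linearization of $\widehat{F}$ obtained via the mean value theorem, and to bound $\|I-\rho L\|_2$ by exploiting the SPD structure of $\widetilde{D}^2_{ij,0}-\widehat{D}^2_{ij,0}$ from Lemma~\ref{hessians_lemma}, the constant-coefficient analysis of Section~\ref{linear_sec}, and Lemma~\ref{lemma_symmetrization}. First, set $W=U-V$ and $\widehat{W}=\widehat{U}-\widehat{V}$; since $U$ and $V$ share the boundary data and the auxiliary condition \eqref{M_rho_matrix_bc2}, $W$ and $\widehat{W}$ vanish on $\cT_{\bh}\cap\partial\Omega$ and satisfy $\Delta_{\bh}(\cdot)=0$ on $\mathcal{S}_{\bh}$, so the estimate reduces to interior nodes. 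Applying the MVT along the segment joining the reduced-form arguments of $U$ and $V$ yields
\[
\widehat{F}[U_\alpha,\bx_\alpha]-\widehat{F}[V_\alpha,\bx_\alpha] = \bA_{1,\alpha}:\widetilde{D}_{\bh}^2 W_\alpha + \bA_{2,\alpha}:\widehat{D}_{\bh}^2 W_\alpha + b_\alpha W_\alpha,
\]
where g-monotonicity gives $[\bA_{1,\alpha}]_{ij}\geq 0$ and $[\bA_{2,\alpha}]_{ij}\leq 0$, while elliptic compatibility gives $\bA_{1,\alpha}+\bA_{2,\alpha}\leq -c\lambda I$ (SPD) and $b_\alpha\geq ck_0$.

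Next, I decompose the linearization using $\widetilde{D}^2_{\bh}=\overline{D}^2_{\bh}+\tfrac12(\widetilde{D}^2_{\bh}-\widehat{D}^2_{\bh})$ and $\widehat{D}^2_{\bh}=\overline{D}^2_{\bh}-\tfrac12(\widetilde{D}^2_{\bh}-\widehat{D}^2_{\bh})$ to get
\[
\bA_1:\widetilde{D}_{\bh}^2 + \bA_2:\widehat{D}_{\bh}^2 = (\bA_1+\bA_2):\overline{D}_{\bh}^2 + \tfrac12(\bA_1-\bA_2):(\widetilde{D}_{\bh}^2-\widehat{D}_{\bh}^2).
\]
In matrix form on interior nodes, $\widehat{W}=(I-\rho L)W$ with
\[
L = D_b + \sum_{i,j}D^{(2)}_{ij}\,\overline{D}^2_{ij,0} + \tfrac12\sum_{i,j}D^{(1)}_{ij}\bigl(\widetilde{D}^2_{ij,0}-\widehat{D}^2_{ij,0}\bigr),
\]
where $D_b$, $D^{(2)}_{ij}$, $D^{(1)}_{ij}$ are diagonal matrices carrying the pointwise coefficients $b_\alpha$, $(\bA_{1,\alpha}+\bA_{2,\alpha})_{ij}$, and $(\bA_{1,\alpha}-\bA_{2,\alpha})_{ij}\geq 0$ respectively. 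By Lemma~\ref{hessians_lemma} each $\widetilde{D}^2_{ij,0}-\widehat{D}^2_{ij,0}$ is SPD, so the moment-correction term contributes a nonnegative symmetric operator once its nonneg-entried diagonal weights are absorbed via a symmetric square root; the central part, with pointwise-SPD weight $-(\bA_{1,\alpha}+\bA_{2,\alpha})\geq c\lambda I$, extends the Section~\ref{linear_sec} analysis (now with pointwise-varying SPD $A$ in place of constant $A$) to give a symmetric-part lower bound of $c\lambda M\geq c\lambda\kappa I$; finally $D_b\geq ck_0 I$.

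Combining these bounds, the symmetric part of $L$ is bounded below by $(c\lambda\kappa + ck_0)I$ up to nonnegative corrections, and $\|L\|_2$ is bounded by a finite (though $\bh$-dependent) constant. A direct computation with $(I-\rho L)^T(I-\rho L) = I - \rho(L+L^T) + \rho^2 L^T L$, together with an application of Lemma~\ref{lemma_symmetrization} to absorb the non-symmetric perturbations into the $O(\rho^2)$ margin of the strict inequality $\sigma I>RBR^*$, yields $\|I-\rho L\|_2 \leq 1 - \rho(c\lambda\kappa/4 + ck_0/4)$ for $\rho>0$ small enough that $\rho(c\lambda\kappa+ck_0)<4$. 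The main obstacle is that the MVT produces a variable-coefficient and non-symmetric $L$, so the clean constant-coefficient estimate of Section~\ref{linear_sec} does not transfer verbatim; the decomposition above, which cleanly separates the SPD ellipticity contribution (from $\bA_1+\bA_2$) from the SPD moment contribution (from $\bA_1-\bA_2$), is what makes Lemma~\ref{lemma_symmetrization} applicable and preserves the contraction factor.
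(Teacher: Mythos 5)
Your setup, your mean-value-theorem linearization, and your key decomposition
\[
\bA_1:\widetilde{D}_{\bh}^2+\bA_2:\widehat{D}_{\bh}^2=(\bA_1+\bA_2):\overline{D}_{\bh}^2+\tfrac12(\bA_1-\bA_2):\bigl(\widetilde{D}_{\bh}^2-\widehat{D}_{\bh}^2\bigr)
\]
are exactly the ones used in the paper, and you correctly identify Lemma~\ref{hessians_lemma}, Section~\ref{linear_sec}, and Lemma~\ref{lemma_symmetrization} as the relevant tools. The gap is in the final norm estimate. You propose to bound $\|I-\rho L\|_2$ by bounding the symmetric part of $L$ from below by $(c\lambda\kappa+ck_0)I$ and pushing everything else into an $O(\rho^2)$ margin. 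But $L$ is a sum of products $F_kB_k$ of a diagonal, variable-coefficient matrix $F_k$ with a symmetric nonnegative-definite difference matrix $B_k$, and the symmetric part of such a product need not be positive semidefinite even when both factors are (take $F=\mathrm{diag}(1,\epsilon)$ and $B$ the all-ones $2\times 2$ matrix: $\det\bigl(\tfrac12(FB+BF)\bigr)=\epsilon-(1+\epsilon)^2/4<0$ for small $\epsilon$). So the claimed lower bound on the symmetric part of $L$ does not follow from your ingredients; absorbing a symmetric square root of the diagonal weight only gives similarity to a PSD matrix, which controls the spectrum of $I-\rho L$ but not its $2$-norm.

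The paper never forms the symmetric part of $L$. Instead it peels off the genuinely symmetric good pieces $\rho\hF_0$ and $\tfrac12\rho c\lambda M$ (these alone produce the contraction factor), writes the remaining variable-coefficient central term as $\sum_k E_kL_{\alpha(k)}$, where each frozen-coefficient matrix $L_{\alpha(k)}$ is SPD by the constant-coefficient analysis of Section~\ref{linear_sec} applied at the single point $\bx_{\alpha(k)}$, splits the identity into $N=2+J_0+2d^2$ pieces, applies Lemma~\ref{lemma_symmetrization} to each product $\frac{1}{2N}I-\rho F_kB_k$ separately (after adding $\epsilon I$ to the merely nonnegative factors $E_k$ and $\widetilde{F}_{ij}-\widehat{F}_{ij}$ so that the lemma's SPD hypothesis holds, at the cost of a term $\rho\epsilon(d^2+1)K$ that is then controlled by choosing $\epsilon$ small), and sums the resulting operator norms by the triangle inequality. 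Lemma~\ref{lemma_symmetrization} is thus the first-order mechanism for every nonsymmetric product, not a device for cleaning up $O(\rho^2)$ remainders; without the identity splitting, the $\epsilon$-perturbation, and the frozen-coefficient representation, your argument does not close.
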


\begin{proof}
Let $W\equiv V-U$ and $\widehat{W}\equiv \widehat{V}-\widehat{U}$. 
Then, by the boundary conditions, 
$\widehat{W}_\alpha = W_\alpha = 0$ for all $\bx_\alpha \in \cT_{\bh} \cap \partial \Omega$ 
and $\Delta_{\bh} \widehat{W}_\alpha = \Delta_{\bh} W_\alpha = 0$ for all $\bx_\alpha \in \mathcal{S}_{\bh}$.  
Thus, by the mean value theorem for $\widehat{F} = \hF(\widetilde{P}, \widehat{P}, v, \bx)$, there holds 
\begin{align} \label{V_U} 
	\widehat{W}_\alpha 
	& = W_\alpha - \rho \left( \widehat{F}[V_\alpha, \bx_\alpha] - \widehat{F}[U_\alpha , \bx_\alpha] \right) \\ 
	& \nonumber = W_\alpha - \rho \left( \hF(\widetilde{D}_{\bh}^2 V_\alpha , \widehat{D}_{\bh}^2 V_\alpha , V_\alpha , \bx_\alpha ) - \hF (\widetilde{D}_{\bh}^2 U_\alpha , \widehat{D}_{\bh}^2 U_\alpha , U_\alpha , \bx_\alpha ) \right) \\ 
	& \nonumber = W_\alpha - \rho 
		\hF \left(\overline{D}_{\bh}^2 V_\alpha  + \frac12 \widetilde{D}_{\bh}^2 V_\alpha - \frac12 \widehat{D}_{\bh}^2 V_\alpha , \overline{D}_{\bh}^2 V_\alpha  - \frac12 \widetilde{D}_{\bh}^2 V_\alpha + \frac12 \widehat{D}_{\bh}^2 V_\alpha , V_\alpha , \bx_\alpha \right) \\ 
		& \nonumber \qquad  
		+ \rho \hF \left( \overline{D}_{\bh}^2 U_\alpha  + \frac12 \widetilde{D}_{\bh}^2 U_\alpha - \frac12 \widehat{D}_{\bh}^2 U_\alpha , \overline{D}_{\bh}^2 U_\alpha  - \frac12 \widetilde{D}_{\bh}^2 U_\alpha + \frac12 \widehat{D}_{\bh}^2 U_\alpha , U_\alpha , \bx_\alpha \right) \\ 
	& \nonumber = \left( 1 - \rho \frac{\partial \hF}{\partial v} \right) W_\alpha 
		- \rho \sum_{i=1}^d \sum_{j=1}^d \left( \frac{\partial \hF}{\partial \widetilde{P}_{ij}} + \frac{\partial \hF}{\partial \widehat{P}_{ij}} \right) \overline{\delta}_{x_i, x_j; h_i , h_j}^2 W_\alpha \\ 
		& \nonumber \qquad 
		- \rho \frac12 \sum_{i=1}^d \sum_{j=1}^d \left( \frac{\partial \hF}{\partial \widetilde{P}_{ij}} - \frac{\partial \hF}{\partial \widehat{P}_{ij}} \right) \widetilde{\delta}_{x_i, x_j; h_i , h_j}^2 W_\alpha \\ 
		& \nonumber \qquad 
		+ \rho \frac12 \sum_{i=1}^d \sum_{j=1}^d \left( \frac{\partial \hF}{\partial \widetilde{P}_{ij}} - \frac{\partial \hF}{\partial \widehat{P}_{ij}} \right) \widehat{\delta}_{x_i, x_j; h_i , h_j}^2 W_\alpha 
\end{align} 
for all $\bx_\alpha \in \cT_{\bh} \cap \Omega$.  

Let  $J_0 = | \cT_{\bh} \cap \Omega |$ and 
$\widehat{\mathbf{W}} , \mathbf{W} \in \mathbb{R}^{J_0}$ denote the vectorization of the 
grid functions $\widehat{W}$ and $W$ restricted to $\cT_{\bh} \cap \Omega$, respectively.   
We introduce several matrix operators in $\mathbb{R}^{J_0 \times J_0}$ 
that act on $\mathbf{W}$ and 
correspond to the FD operators with the boundary data naturally incorporated directly into the definition 
of the matrix.  
Then, using the notation of Section~\ref{discrete_Hessian_comparison_sec}, 
$\widetilde{D}_{ij,0}$ and $\widehat{D}_{ij,0}$ denote the matrix representations of $\widetilde{\delta}_{x_i, x_j; h_i , h_j}^2$ 
and $\widehat{\delta}_{x_i, x_j; h_i , h_j}^2$, respectively.  
Similarly, $\overline{D}_{ij,0}$ denotes the matrix representations of $\overline{\delta}_{x_i, x_j; h_i , h_j}^2$.  
We also let $\hF_0$ denote the diagonal matrix 
corresponding to the nodal values of $\frac{\partial \hF}{\partial v}$,  
$\widetilde{F}_{ij}$ denote the diagonal matrix corresponding to 
$\frac{\partial \hF}{\partial \widetilde{P}_{ij}}$, 
and $\widehat{F}_{ij}$ denote the diagonal matrix corresponding to 
$\frac{\partial \hF}{\partial \widehat{P}_{ij}}$.  
Then $\hF_0$, $\widetilde{F}_{ij}$, and $-\widehat{F}_{ij}$ are all nonnegative definite, 
and we have \eqref{V_U} becomes 
\begin{align*}
	\widehat{\mathbf{W}}  
	& = (I - \rho \hF_0) \bW 
		 - \rho \sum_{i=1}^d \sum_{j=1}^d \left( \widetilde{F}_{ij} + \widehat{F}_{ij} \right) \overline{D}_{ij,0} \bW \\ 
		& \qquad \nonumber 
		- \rho \frac12 \sum_{i=1}^d \sum_{j=1}^d \left( \widetilde{F}_{ij} - \widehat{F}_{ij} \right) \widetilde{D}_{ij,0} \bW  
		+ \rho \frac12 \sum_{i=1}^d \sum_{j=1}^d \left( \widetilde{F}_{ij} - \widehat{F}_{ij} \right) \widehat{D}_{ij,0} \bW . 
\end{align*}
Letting $M$ denote the matrix corresponding to $-\Delta_{2 \bh}$, 
it follows that 
\begin{align} \label{V_UmatF}
	\widehat{\mathbf{W}}  
	& = (I - \rho \hF_0) \bW - \rho \frac{1}{2} c \lambda M \bW 
		 - \rho \left[ \sum_{i=1}^d \sum_{j=1}^d \left( \widetilde{F}_{ij} + \widehat{F}_{ij}  \right) \overline{D}_{ij,0} - \frac12 c \lambda M \right] \bW \\ 
		& \qquad \nonumber 
		- \rho \frac12 \sum_{i=1}^d \sum_{j=1}^d \left( \widetilde{F}_{ij} - \widehat{F}_{ij} \right) \left( \widetilde{D}_{ij,0} - \overline{D}_{ij,0} \right) \bW \\ 
		& \qquad \nonumber
		+ \rho \frac12 \sum_{i=1}^d \sum_{j=1}^d \left( \widetilde{F}_{ij} - \widehat{F}_{ij} \right) \left( \widehat{D}_{ij,0} - \overline{D}_{ij,0} \right) \bW . 
\end{align}

Choose $\bx_\alpha \in \cT_{\bh} \cap \Omega$.  
Let $A(\bx_\alpha) \in \mathbb{R}^{d \times d}$ be defined by $A_{ij}(\bx_\alpha) = \frac{\partial \hF}{\partial \widetilde{P}_{ij}} + \frac{\partial \hF}{\partial \widehat{P}_{ij}}$.  
Then, $-A(\bx_\alpha) \geq c \lambda I$ by the elliptic compatible condition.  
Observe that 
\begin{align*}
	& \left( A(\bx_\alpha) - \frac{1}{2} c \lambda I \right) : \overline{D}_{\bh}^2 W_\alpha \\ 
	& \qquad = \sum_{i=1}^d \sum_{j=1}^d \left( \frac{\partial \hF}{\partial \widetilde{P}_{ij}} + \frac{\partial \hF}{\partial \widehat{P}_{ij}} \right) \overline{\delta}_{x_i , x_j ; h_i , h_j}^2 W_\alpha 
		- \frac{1}{2} c \lambda \sum_{i=1}^d \overline{\delta}_{x_i, h_i}^2 W_\alpha \\ 
	& \qquad = \sum_{i=1}^d \sum_{j=1}^d \left( \frac{\partial \hF}{\partial \widetilde{P}_{ij}} + \frac{\partial \hF}{\partial \widehat{P}_{ij}} \right) \overline{\delta}_{x_i , x_j ; h_i , h_j}^2 W_\alpha 
		- \frac{1}{2} c \lambda \Delta_{2 \bh} W_\alpha .  
\end{align*}
Then, the term $\left[ \sum_{i=1}^d \sum_{j=1}^d \left( \widetilde{F}_{ij} + \widehat{F}_{ij}  \right) \overline{D}_{ij,0} - \frac12 c \lambda M \right] \bW$ 
in \eqref{V_UmatF} is the matrix representation of 
$\left( A(\bx_\alpha) - \frac{1}{2} c \lambda I \right) : \overline{D}_{\bh}^2 W_\alpha$.  

We next utilize the frozen coefficient technique.  
Define the matrices $L_\alpha \in \mathbb{R}^{J_0 \times J_0}$ as the matrix representations of 
$\left( A(\bx_\alpha) - \frac{1}{2} c \lambda I \right) : \overline{D}_{\bh}^2$ 
for all $\bx_\alpha \in \cT_{\bh} \cap \Omega$.  
Then, by Section~\ref{linear_sec}, we have $L_\alpha$ is symmetric positive definite for all multi-indices $\alpha$ 
since the coefficient matrix is constant-valued for each fixed value of $\alpha$.  
Define $E_k \in \mathbb{R}^{J_0 \times J_0}$ by $[E_k]_{ij} = 1$ only if $i=j=k$ and $0$ otherwise.  
Notationally, we let $\alpha(k)$ be the multi-index corresponding to the single-index $k$.  
Then, \eqref{V_UmatF} can be rewritten as   
\begin{align*}
	\widehat{\mathbf{W}}  
	& = (I - \rho \hF_0) \bW - \rho \frac{1}{2} c \lambda M \bW 
		- \rho \sum_{k=1}^{J_0} E_k L_{\alpha(k)} \bW \\ 
		& \qquad \nonumber 
		- \rho \frac12 \sum_{i=1}^d \sum_{j=1}^d \left( \widetilde{F}_{ij} - \widehat{F}_{ij} \right) \left( \widetilde{D}_{ij,0} - \overline{D}_{ij,0} \right) \bW \\ 
		& \qquad \nonumber
		- \rho \frac12 \sum_{i=1}^d \sum_{j=1}^d \left( \widetilde{F}_{ij} - \widehat{F}_{ij} \right) \left( \overline{D}_{ij,0} - \widehat{D}_{ij,0} \right) \bW \\ 
	& \equiv G \mathbf{W} 
\end{align*}
for the iteration matrix $G \in \mathbb{R}^{J_0 \times J_0}$ defined by 
\begin{align*}
	G & \equiv (I - \rho \hF_0) - \rho \frac{1}{2} c \lambda M  
		- \rho \sum_{k=1}^{J_0} E_k L_{\alpha(k)} \\ 
		& \qquad \nonumber 
		- \rho \frac12 \sum_{i=1}^d \sum_{j=1}^d \left( \widetilde{F}_{ij} - \widehat{F}_{ij} \right) \left( \widetilde{D}_{ij,0} - \overline{D}_{ij,0} \right) \\ 
		& \qquad \nonumber
		- \rho \frac12 \sum_{i=1}^d \sum_{j=1}^d \left( \widetilde{F}_{ij} - \widehat{F}_{ij} \right) \left( \overline{D}_{ij,0} - \widehat{D}_{ij,0} \right)  .  
\end{align*}

Choose $\epsilon > 0$.  
Let $N = 2 + J_0 + 2d^2$. 
Observe that $E_k + \epsilon I$ and $\widetilde{F}_{ij} - \widehat{F}_{ij} + \epsilon I$ 
are symmetric positive definite for all $k$ and all $i,j$.  
Choose $K$ such that 
$0_{J_0 \times J_0} \leq \sum_{k=1}^{J_0} L_{\alpha(k)} \leq K I$, 
$0_{J_0 \times J_0} \leq \widetilde{D}_{ij,0} - \overline{D}_{ij,0} \leq K I$, 
and $0_{J_0 \times J_0} \leq \overline{D}_{ij,0} - \widehat{D}_{ij,0} \leq K I$ for all $i,j$, 
where the first bound uses the fact that $L_{\alpha(k)}$ is symmetric positive definite for all indices $k$ 
and the other bounds follow from Lemma~\ref{hessians_lemma}.  
Then, by Lemma~\ref{lemma_symmetrization}, there holds 
\begin{align}
	\| G \|_2 
	\nonumber 
	& \leq \frac12 + \left\| \frac{1}{2N} I - \rho \hF_0 \right\|_2 
		+ \left\| \frac{1}{2N} I -  \rho \frac{1}{2} c \lambda M  \right\|_2 \\ 
		& \qquad \nonumber 
		+ \sum_{k=1}^{J_0} \left\| \frac{1}{2N} I - \rho (E_k + \epsilon I ) L_{\alpha(k)} \right\|_2 
		+ \rho \epsilon \left\| \sum_{k=1}^{J_0} L_{\alpha(k)} \right\|_2 \\ 
		& \qquad \nonumber 
		+ \frac12 \sum_{i=1}^d \sum_{j=1}^d \left\| \frac{1}{N} I - \rho \left( \widetilde{F}_{ij} - \widehat{F}_{ij} + \epsilon I \right) \left( \widetilde{D}_{ij,0} - \overline{D}_{ij,0} \right) \right\|_2 \\ 
		& \qquad \nonumber 
		+ \frac12 \sum_{i=1}^d \sum_{j=1}^d \left\| \frac{1}{N} I - \rho \left( \widetilde{F}_{ij} - \widehat{F}_{ij} + \epsilon I \right) \left( \overline{D}_{ij,0} - \widehat{D}_{ij,0} \right) \right\|_2 \\ 
		& \qquad \nonumber 
		+ \rho \epsilon \frac12 \sum_{i=1}^d \sum_{j=1}^d \left\| \widetilde{D}_{ij,0} - \overline{D}_{ij,0} \right\|_2 
		+ \rho \epsilon \frac12 \sum_{i=1}^d \sum_{j=1}^d \left\| \overline{D}_{ij,0} - \widehat{D}_{ij,0} \right\|_2 \\ 
	& \nonumber 
	\leq \frac12 + \frac{1}{2N} - \rho c k_0 + \frac{1}{2N} - \rho \frac{1}{2} c \lambda \kappa 
		+ \sum_{k=1}^{J_0} \frac{1}{2N} 
		+ d^2 \frac{1}{N} 
		+ \rho \epsilon (d^2 + 1) K \\ 
	& \nonumber 
	< 1 - \rho \frac{3}{4} c k_0 - \rho \frac14 c \lambda \kappa
\end{align}
for all $\rho > 0$ sufficiently small and $\epsilon < c \frac{k_0 + \lambda \kappa}{4 (d^2+1) K}$ 
chosen independently of $\rho$.  
The bound 
\[
	\| \widehat{W} \|_{\ell^2(\cT_{\mathbf{h}} \cap \Omega)} 
	\leq \left(1 - \rho \frac{c \lambda \kappa}{4} - \rho \frac{c k_0}{4} \right) \| W \|_{\ell^2(\cT_{\mathbf{h}}\cap \Omega)} 
\] 
follows since $\|\widehat{\bf{W}}\|_2\leq \|G\|_2 \| \mathbf{W} \|_2$, 
and the bound over $\cT_{\bh}$ follows since $\widehat{W}_\alpha = W_\alpha = 0$ over $\cT_{\bh} \cap \partial \Omega$.  
The proof is complete. 
\hfill 
\end{proof}

As an immediate corollary to Lemma~\ref{lemma_contraction}, 
we have the following well-posedness result by the contractive mapping theorem.  

\begin{theorem} \label{existence_thm}
Suppose the operator $F$ in \eqref{bvp} is proper and uniformly elliptic,  
differentiable, and Lipschitz continuous with respect to its first two arguments. 
Suppose $\widehat{F}$ is consistent, is g-monotone, can be written in reduced form, 
is differentiable with respect to its first three arguments, and is elliptic compatible.  
The scheme \eqref{FD_method} 
for approximating problem \eqref{bvp} has a unique solution. 
\end{theorem}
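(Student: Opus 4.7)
The plan is to recast the scheme \eqref{FD_method} as a fixed-point problem for the pseudo time-stepping map $\cM_\rho$ defined in \eqref{M_rho}--\eqref{M_rho_matrix}, then invoke the Banach Contractive Mapping Theorem using Lemma~\ref{lemma_contraction} on a suitable closed affine subspace of $S(\cT_{\bh}')$. First I would observe that $U\in S(\cT_{\bh}')$ solves \eqref{FD_method} if and only if $U$ is a fixed point of $\cM_\rho$ for some (equivalently, every) $\rho>0$: the interior relation \eqref{FD_method:1} is equivalent to \eqref{M_rho_interior:1} with $\widehat{U}_\alpha=U_\alpha$, while the Dirichlet condition \eqref{FD_method:2} and the auxiliary condition \eqref{FD_auxiliaryBC} coincide with \eqref{M_rho_interior:2} and \eqref{M_rho_matrix_bc2}, respectively.

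Next I would introduce the affine subspace
\[
	\mathcal{A} \equiv \Bigl\{ U\in S(\cT_{\bh}') \,:\, U_\alpha=g(\bx_\alpha)\text{ for } \bx_\alpha\in \cT_{\bh}\cap\partial\Omega,\ \Delta_{\bh}U_\alpha=0\text{ for } \bx_\alpha\in\mathcal{S}_{\bh}\Bigr\}.
\]
By construction of $\cM_\rho$ via \eqref{M_rho_interior:2} and \eqref{M_rho_matrix_bc2}, the image $\cM_\rho U$ lies in $\mathcal{A}$ for every $U\in S(\cT_{\bh}')$; in particular $\cM_\rho(\mathcal{A})\subset \mathcal{A}$. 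The set $\mathcal{A}$ is a translate of a finite-dimensional linear subspace of the finite-dimensional space $S(\cT_{\bh}')$, hence a complete metric space when endowed with the $\ell^2(\cT_{\bh})$ distance.

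Applying Lemma~\ref{lemma_contraction} to any $U,V\in\mathcal{A}$, one obtains
\[
	\| \cM_\rho U - \cM_\rho V \|_{\ell^2(\cT_{\bh})}
	\leq \Bigl(1-\rho\tfrac{c\lambda\kappa}{4}-\rho\tfrac{ck_0}{4}\Bigr)\| U-V \|_{\ell^2(\cT_{\bh})}
\]
for all $\rho>0$ sufficiently small, in which case the factor is strictly less than $1$. Thus $\cM_\rho:\mathcal{A}\to\mathcal{A}$ is a strict contraction on a complete metric space, and the Banach Contractive Mapping Theorem produces a unique fixed point $U^*\in\mathcal{A}$. By the equivalence established in the first paragraph, $U^*$ is the unique solution of \eqref{FD_method}.

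The heavy lifting has already been done in Lemma~\ref{lemma_contraction}; the only subtlety worth double-checking here is that the two boundary constraints defining $\mathcal{A}$ are truly preserved by $\cM_\rho$ and that ``fixed point in $\mathcal{A}$'' is the same as ``fixed point in $S(\cT_{\bh}')$'' for this map. The first is immediate from the definition of $\cM_\rho$; for the second, a fixed point must satisfy $\widehat{U}=U$ in all components, in particular on $\cT_{\bh}\cap\partial\Omega$ and on the ghost layer, which forces membership in $\mathcal{A}$. No additional issues arise from extending the argument to operators $F$ depending on $\nabla u$, as indicated by the remarks preceding Lemma~\ref{lemma_contraction}.
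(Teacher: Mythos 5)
Your proposal is correct and takes essentially the same approach the paper intends: reformulate \eqref{FD_method} as a fixed-point problem for $\cM_\rho$, use Lemma~\ref{lemma_contraction} to show $\cM_\rho$ is a strict contraction in $\ell^2$ on the affine subspace of grid functions satisfying \eqref{FD_method:2}--\eqref{FD_auxiliaryBC}, and invoke the Banach Contractive Mapping Theorem. The paper compresses all of this into the single sentence ``As an immediate corollary to Lemma~\ref{lemma_contraction}, \dots by the contractive mapping theorem,'' so your write-up simply makes the invariance of the affine subspace, its completeness, and the equivalence ``fixed point of $\cM_\rho$ $\Leftrightarrow$ solution of \eqref{FD_method}'' explicit.
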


\begin{remark} 
We emphasize that the properties of $F$ and $\hF$ guarantee a unique solution whenever $k_0 > 0$ or $\lambda > 0$.  
\end{remark}


\section{Stability analysis} \label{stability_sec}

For transparency and consistency with the results in Section~\ref{admissible_sec}, 
assume the operator $F$ in \eqref{FD_pde} has the form  $F[u](\bx) = F \left( D^2 u, u, \bx \right)$.  

\begin{theorem} \label{L2stable_thm}
Suppose the operator $F$ in \eqref{bvp} is proper and uniformly elliptic 
and is Lipschitz continuous and differentiable with respect to its first two arguments 
with $\lambda > 0$ or $k_0 > 0$.  
Suppose $\widehat{F}$ is consistent, is g-monotone, can be written in reduced form, 
is Lipschitz continuous and differentiable with respect to its first three arguments, 
and is elliptic compatible.  
Then the solution $U$ to the scheme \eqref{FD_method} 
for approximating problem \eqref{bvp} is $\ell^2$-norm stable in the sense that 
\[
	\Bigl( \prod_{i=1,2,\ldots,d} h_i^{\frac12} \Bigr) \left\| U \right\|_{\ell^2(\cT_{\mathbf{h}} \cap \Omega)} 
	\leq C , 
\]
where $C$ is a positive $\mathbf{h}$-independent constant which depends on $\Ome$, the lower (proper) 
ellipticity constants $k_0 $ and $\lambda$, $F$, and $g$. 
\end{theorem}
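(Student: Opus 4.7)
The plan is to translate the $\ell^2$-contractivity of $\cM_\rho$ established in Lemma~\ref{lemma_contraction} into a quantitative a priori bound on the unique fixed point $U$ via the Banach fixed point theorem. Since $U$ is the unique fixed point of $\cM_\rho$ on the affine set $X \equiv \{V \in S(\cT_{\bh}') : V|_{\cT_{\bh} \cap \partial\Omega} = g,\ \Delta_{\bh} V|_{\mathcal{S}_{\bh}} = 0\}$ (cf. Theorem~\ref{existence_thm}), for any reference $V^0 \in X$ the Banach fixed point estimate gives
\[
	\|U - V^0\|_{\ell^2(\cT_{\bh})}
	\leq \frac{1}{1 - q}\, \|\cM_\rho V^0 - V^0\|_{\ell^2(\cT_{\bh})}
	= \frac{4}{c(\lambda \kappa + k_0)}\, \|\hF[V^0]\|_{\ell^2(\cT_{\bh}\cap\Omega)},
\]
with $q = 1 - \rho(c\lambda\kappa + c k_0)/4$, where the second equality uses that $V^0 \in X$ forces $(\cM_\rho V^0 - V^0)_\alpha = -\rho\, \hF[V^0_\alpha, \bx_\alpha]$ on interior nodes and vanishes at all boundary and auxiliary-boundary nodes. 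Crucially, the prefactor $\rho/(1-q) = 4/(c(\lambda \kappa + k_0))$ is independent of the pseudo time step $\rho$, so the small-$\rho$ condition appearing in Lemma~\ref{lemma_contraction} produces no penalty in the final bound.

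The next step is to select a reference $V^0 \in X$ whose image under $\hF$ is bounded in discrete $L^2$ uniformly in $\bh$. A natural choice is the nodal restriction of a fixed smooth extension $w \in C^2(\overline{\Omega})$ of the boundary data $g$, with the ghost values adjusted so that $\Delta_{\bh} V^0|_{\mathcal{S}_{\bh}} = 0$. Then $\|V^0\|_{\ell^\infty} \leq \|w\|_{L^\infty}$ and the discrete Hessians $D_{\bh}^{\mu\nu} V^0$ are bounded uniformly in $\bh$ by $\|D^2 w\|_{L^\infty}$ up to discretization error, so the Lipschitz continuity of $\hF$ in its first three arguments yields $\|\hF[V^0]\|_{\ell^\infty(\cT_{\bh}\cap\Omega)} \leq C_1$ independent of $\bh$. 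Multiplying the fixed point estimate by $\prod_i h_i^{1/2}$, using the triangle inequality, and applying the trivial bound $\bigl(\prod_i h_i^{1/2}\bigr) \|W\|_{\ell^2(\cT_{\bh}\cap\Omega)} \leq |\Omega|^{1/2} \|W\|_{\ell^\infty(\cT_{\bh}\cap\Omega)}$ on both resulting terms, one obtains the desired estimate with constant $C$ depending only on $|\Omega|$, $w$, $c$, $\lambda$, $\kappa$, and $k_0$.

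It remains to verify that $\kappa$, the smallest eigenvalue of the matrix $M$ representing $-\Delta_{2\bh}$ under the homogeneous analogue of the Dirichlet and auxiliary boundary conditions, admits a positive lower bound independent of $\bh$. This is a classical discrete Friedrichs/Poincar\'e inequality on the bounded domain $\Omega$: one has $\langle M V, V \rangle \geq \kappa_\Omega\, \|V\|_{\ell^2(\cT_{\bh}\cap\Omega)}^2$ for every grid function vanishing on $\partial\Omega$, with $\kappa_\Omega > 0$ depending only on $\Omega$. Combined with the ellipticity-compatibility constant $c$ (which is $\bh$-independent by Definition~\ref{compatible_def}) and the standing hypothesis $\lambda > 0$ or $k_0 > 0$, this closes the argument.

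The main obstacle, as I see it, is the construction of a reference $V^0 \in X$ with uniformly bounded discrete Hessians when the boundary data $g$ is only assumed continuous; if no $C^2$ extension of $g$ is available, one must replace $w$ by a mollification $w_\epsilon$ with $\epsilon$ fixed independently of $\bh$, letting $C$ depend on a modulus of continuity of $g$ through $\|D^2 w_\epsilon\|_{L^\infty}$, which is permitted by the statement. A secondary technicality is that the ghost-value adjustment enforcing $\Delta_{\bh} V^0|_{\mathcal{S}_{\bh}} = 0$ must not contaminate the uniform Hessian bounds: a direct computation shows the required adjustments are $O(h_i^2)$ at ghost points and affect only nodes adjacent to $\mathcal{S}_{\bh}$, so the bounds propagate cleanly.
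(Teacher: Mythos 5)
Your proof is essentially the same as the paper's: both translate the $\ell^2$-contractivity of $\cM_\rho$ from Lemma~\ref{lemma_contraction} into the bound
\[
\|U-V\|_{\ell^2(\cT_{\bh}\cap\Omega)}\le \frac{4}{c(\lambda\kappa+k_0)}\,\|\hF[V]\|_{\ell^2(\cT_{\bh}\cap\Omega)}
\]
for a suitable reference $V$ that satisfies the boundary conditions, and then invoke the triangle inequality. The differences are worth flagging briefly. You deploy the Banach fixed-point estimate $\|U-V^0\|\le(1-q)^{-1}\|\cM_\rho V^0-V^0\|$ directly on $\cM_\rho$; the paper instead first linearizes $\hF$ via the mean value theorem to write $U-V$ as the solution of a frozen linear scheme and then applies the contraction lemma to that linear problem. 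The two routes give the same constant, and yours is arguably cleaner since $\cM_\rho$ is already shown to be a contraction on the nonlinear level. You also choose a fixed smooth (or mollified) extension $w$ of $g$, whereas the paper takes $v$ to be the harmonic extension solving $-\Delta v=0$ with $v|_{\partial\Omega}=g$; the harmonic choice is natural because it makes the exact auxiliary condition $\Delta v=0$ hold, but if $g$ is only continuous then $D^2 v$ degenerates near $\partial\Omega$ and the uniform bound on $\hF[V]$ requires exactly the regularity caveat you raise, so your mollification remark is a genuine improvement in transparency rather than a divergence in method. Finally, you correctly observe that the $\bh$-uniformity of $\kappa$ (the lower spectral bound for the matrix representation of $-\Delta_{2\bh}$) rests on a discrete Poincar\'e/Friedrichs inequality, a point the paper uses implicitly without comment.
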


\begin{proof}
Define the function $v \in C^0(\overline{\Omega}) \cap H^2(\Omega)$ to be the solution to 
\begin{subequations} \label{bvp_aux}
\begin{alignat}{2}
	- \Delta v & = 0 \qquad && \text{in } \Omega , \\ 
	v & = g \qquad && \text{on } \partial \Omega , 
\end{alignat}
\end{subequations} 
and define $V : \cT_{\bh}' \to \mathbb{R}$ by $V_\alpha = v(\bx_\alpha)$ for all 
$\bx_\alpha \in \cT_{\bh} \cap \overline{\Omega}$ 
and introduce the ghost points so that 
$\Delta_{\bh} V_\alpha = 0$ for all $\bx_\alpha \in \mathcal{S}_{\bh}$.  
Then, by the mean value theorem, there exists a linear operator $\mathcal{L}_{\bh}$ such that 
\[
	\hF[U_\alpha, \bx_\alpha] - \hF[V_\alpha, \bx_\alpha] = \mathcal{L}_{\bh} [ U_\alpha - V_\alpha ] . 
\]
Furthermore, since $U$ is a solution to \eqref{FD_method}, there holds 
\[
	\hF[U_\alpha, \bx_\alpha] - \hF[V_\alpha, \bx_\alpha] = 0 - \hF[V_\alpha, \bx_\alpha] = - \hF[V_\alpha, \bx_\alpha] . 
\]
Thus, $U_\alpha - V_\alpha$ is a solution to 
\begin{subequations}\label{FD_method_stable}
\begin{alignat}{2} 
\mathcal{L}_{\bh} [U_\alpha - V_\alpha, \bx_\alpha] + \hF[V_\alpha, \bx_\alpha] & = 0  &&\qquad\mbox{for } \mathbf{x}_\alpha\in\mathcal{T}_{\mathbf{h}} \cap \Omega, \label{FD_method_stable:1}\\
U_\alpha - V_\alpha &= 0  &&\qquad\mbox{for } \mathbf{x}_\alpha\in\mathcal{T}_{\mathbf{h}}\cap  \partial\Omega , \label{FD_method_stable:2} \\ 
\Delta_{\mathbf{h}} (U_\alpha - V_\alpha) & = 0 
&& \qquad \mbox{for } \mathbf{x}_\alpha\in \mathcal{S}_{\mathbf{h}} \subset \mathcal{T}_{\mathbf{h}} \cap \partial\Omega , 
\label{FD_auxiliaryBC_stable} 
\end{alignat}
\end{subequations}
where 
$\mathcal{L}_{\bh}$ has the form 
\[
	\mathcal{L}_{\bh} \equiv A_{\bh} : \widetilde{D}_{\bh}^2 - B_{\bh} : \widehat{D}_{\bh}^2 + c_{\bh}
\]
for $A_{\bh}, B_{\bh} \in \mathcal{S}^{d \times d}$ with all nonnegative components 
and $c_{\bh} \in \mathbb{R}$ with $c_{\bh} \geq 0$ 
by the g-monotonicity of $\hF$.  
Furthermore, $\mathcal{L}_{\bh}$ is consistent with the linear elliptic boundary value problem 
\begin{subequations}\label{bvp_L}
\begin{alignat}{2}
	\mathcal{L} w \equiv -A : D^2 w + c w & = - F[v]  , && \qquad \text{in } \Omega , \\ 
	u & = g , && \qquad \text{on } \partial \Omega 
\end{alignat}
\end{subequations}
with the matrix $A$ symmetric positive definite with $\lambda I \leq A \leq \Lambda I$ and $c \geq k_0$, 
where $A_{\bh} - B_{\bh} \leq - c_0 \lambda I$ and $c_{\bh} \geq c_0 k_0$ 
for $c_0$ the constant based on the elliptic compatibility of $\hF$.  

Applying Theorem~\ref{existence_thm}, $U_\alpha - V_\alpha$ is the unique solution to \eqref{FD_method_stable}.  
Furthermore, by Lemma~\ref{lemma_contraction} and using the technique in Theorem 4.3 of \cite{FengLewis21}, 
there holds 
\begin{align*}
	\| U - V \|_{\ell^2(\cT_{\bh} \cap \Omega)} 
	& \leq \frac{4}{c_0 \lambda \kappa + c_0 k_0} \| \mathcal{L}_{\bh}[0] + \widehat{F}[V_\alpha , \bx_\alpha] \|_{\ell^2(\cT_{\bh} \cap \Omega)} \\ 
	& = \frac{4}{c_0 \lambda \kappa + c_0 k_0} \| \widehat{F}[V_\alpha , \bx_\alpha] \|_{\ell^2(\cT_{\bh} \cap \Omega)} . 
\end{align*}
Therefore, 
\begin{align*}
	\| U  \|_{\ell^2(\cT_{\bh} \cap \Omega)}  
	& \leq \| V \|_{\ell^2(\cT_{\bh} \cap \Omega)} 
		+ \frac{4}{c_0 \lambda \kappa + c_0 k_0} \| \widehat{F}[V_\alpha , \bx_\alpha] \|_{\ell^2(\cT_{\bh} \cap \Omega)} 
\end{align*}
and the result follows by the properties of $v$.  
The proof is complete.   
\hfill
\end{proof}

Extending the techniques above and following the proofs in Section 5 of \cite{FengLewis21}, we can also prove the 
following results.  

\begin{lemma} \label{lemma_contraction2} 
Let $i \in \{1,2,\ldots,d \}$.  
Suppose the operator $F$ in \eqref{bvp} is proper and uniformly elliptic,  
differentiable, and Lipschitz continuous with respect to its first two arguments. 
Suppose $\widehat{F}$ is consistent, g-monotone, can be written in reduced form, 
is differentiable with respect to its first three arguments, 
and is elliptic compatible.  
Choose $U,V\in S(\cT_{\mathbf{h}}')$ that satisfy the boundary conditions \eqref{FD_method_linear:2} 
and \eqref{FD_method_linear:3}, 
and let $\widehat{U} = \cM_{\rho,2} U$ and $\widehat{V} = \cM_{\rho,2} V$, 
where $\widehat{W} \in S(\cT_{\mathbf{h}}')$ is defined by $\widehat{W} = \cM_{\rho,2} W$ 
for some $W \in S(\cT_{\mathbf{h}}')$ if 
\begin{alignat*}{2}  
	 - \delta_{x_i, 2 h_i}^2 \widehat{W}_\alpha &= - \delta_{x_i, 2 h_i}^2 W_\alpha - \rho \widehat{F} [ W_\alpha , \bx_\alpha] \qquad 
		&& \forall \bx_\alpha \in \cT_{\bh} \cap \Omega , \\ 
	 \widehat{W}_\alpha &= g(\bx_\alpha) && \forall \bx_\alpha \in \cT_{\bh} \cap \partial \Omega , \\ 
	\Delta_{\bh} \widehat{W}_\alpha & = 0 && \forall \mathbf{x}_\alpha\in \mathcal{S}_{\bh} . 
\end{alignat*}
Then there holds 
\[
	\| \delta_{x_i, 2 h_i}^2 ( \widehat{U} - \widehat{V} ) \|_{\ell^2(\cT_{\mathbf{h}} \cap \Omega)} 
	\leq \left(1 - \rho \frac{c \lambda \kappa}{4} \right) \| \delta_{x_i, 2 h_i}^2 ( U - V ) \|_{\ell^2(\cT_{\mathbf{h}} \cap \Omega)} 
\] 
for all $\rho > 0$ sufficiently small, 
where $4 > \rho c \lambda \kappa$, 
$\frac{\partial F}{\partial D^2 u} \geq \lambda I$, $\frac{\partial F}{\partial u} \geq k_0$, 
and $M \geq \kappa I$ for $M$ the matrix representation of $-\Delta_{2 \bh}$.  
\end{lemma}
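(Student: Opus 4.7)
The plan is to closely mirror the proof of Lemma~\ref{lemma_contraction}, tracking the seminorm $\|\delta_{x_i, 2h_i}^2 \cdot\|_{\ell^2}$ instead of the standard $\ell^2$-norm. First I would set $W = V - U$ and $\widehat{W} = \widehat{V} - \widehat{U}$, noting that both vanish on $\cT_{\mathbf{h}} \cap \partial\Omega$ and satisfy the zero discrete Laplacian condition on $\mathcal{S}_{\mathbf{h}}$. Subtracting the two defining equations of $\cM_{\rho,2}$ gives
\begin{align*}
-\delta_{x_i, 2h_i}^2 \widehat{W}_\alpha = -\delta_{x_i, 2h_i}^2 W_\alpha - \rho\bigl(\widehat{F}[V_\alpha, \bx_\alpha] - \widehat{F}[U_\alpha, \bx_\alpha]\bigr).
\end{align*}
I would then apply the mean value theorem and the elliptic compatibility hypothesis, exactly as in Lemma~\ref{lemma_contraction}, to write $\widehat{F}[V_\alpha, \bx_\alpha] - \widehat{F}[U_\alpha, \bx_\alpha] = \mathcal{L}_{\mathbf{h}} W_\alpha$, where $\mathcal{L}_{\mathbf{h}}$ is a linear discrete operator whose principal-part coefficient matrix $A(\bx_\alpha) = \partial \widehat{F}/\partial \widetilde{P} + \partial \widehat{F}/\partial \widehat{P}$ satisfies $-A \geq c\lambda I$.

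Next I would move to matrix form. Let $D \in \mathbb{R}^{J_0 \times J_0}$ denote the matrix of $-\delta_{x_i, 2h_i;0}^2$ with the zero Dirichlet boundary data and auxiliary Laplacian boundary condition absorbed; by Section~\ref{discrete_Hessian_comparison_sec} $D$ is symmetric positive definite with $D \leq M$, and the discrete Poincar\'e-type bound gives $D \geq \kappa_* I$ for some $h$-independent $\kappa_* > 0$. Letting $L$ denote the matrix of $\mathcal{L}_{\mathbf{h}}$, the equation becomes $D\widehat{\mathbf{W}} = D\mathbf{W} - \rho L \mathbf{W}$, so that, with $\mathbf{Z} \equiv D\mathbf{W}$,
\begin{align*}
\widehat{\mathbf{Z}} = (I - \rho L D^{-1}) \mathbf{Z},
\end{align*}
and the claim reduces to $\|I - \rho L D^{-1}\|_2 \leq 1 - \rho c \lambda \kappa/4$.

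I would then reuse the decomposition of $L$ established in the proof of Lemma~\ref{lemma_contraction}, namely $L = \widehat{F}_0 + \tfrac12 c\lambda M + \sum_k E_k L_{\alpha(k)} + (\text{Hessian-difference terms})$, and multiply on the right by $D^{-1}$. The principal contribution $\tfrac12 c\lambda M D^{-1}$ is symmetric positive definite on a Cartesian grid since $M$ and $D$ are built from pairwise commuting one-dimensional SPD factors; moreover $M D^{-1} \geq I$, while the factor $\kappa$ enters through the bound $\|D^{-1}\|_2 \leq \kappa^{-1}$ used to control the remaining summands. I would then split the identity $I$ into as many equal pieces as there are summands in $L D^{-1}$, pair each piece with one summand, and apply Lemma~\ref{lemma_symmetrization} (after small $\epsilon$-regularizations of the non-symmetric factors, exactly as in Lemma~\ref{lemma_contraction}). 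The Hessian-difference products are bounded using Lemma~\ref{hessians_lemma}, and the frozen-coefficient matrices $L_{\alpha(k)}$ are handled as in Section~\ref{linear_sec}. For $\rho$ sufficiently small and $\epsilon$ chosen independently of $\rho$, this yields the claimed bound.

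The main obstacle is that $L D^{-1}$ is not symmetric in general, so the elegant pairing in Lemma~\ref{lemma_symmetrization} cannot be applied directly to every summand; we must verify that each pairing is SPD (possibly after an $\epsilon$-perturbation) and that the accumulated $\epsilon$-error is absorbed by the gap $\rho c \lambda / 4$. The absence of the $k_0$-dependent improvement present in Lemma~\ref{lemma_contraction} is expected: the zero-order term $\widehat{F}_0 D^{-1}$ does not contribute a positive symmetric piece after multiplication by $D^{-1}$, so only the second-order coercivity survives in this seminorm.
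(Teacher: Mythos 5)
The paper never writes out a proof of this lemma; it only asserts that the result follows by ``extending the techniques above and following the proofs in Section 5 of \cite{FengLewis21}.'' Your proposal is therefore judged as an attempted realization of that extension, and its skeleton is the right one: subtracting the two iterations gives $D\widehat{\mathbf{W}} = D\mathbf{W} - \rho L\mathbf{W}$ with $D$ the matrix of $-\delta_{x_i,2h_i;0}^2$, and substituting $\mathbf{Z}=D\mathbf{W}$ reduces the claim to an operator-norm bound on $I-\rho LD^{-1}$. Two preliminary points deserve more care than you give them. First, invertibility of $D$ is not free: $D_i^TD_i$ has checkerboard kernel modes along grid lines, and $D=D_i^TD_i+B_i$ is positive definite only because $B_i$ is strictly positive at the first interior node of each line, where every such kernel vector is nonzero; the paper only proves positivity of the full sum $M$, not of each summand. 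Second, the coercive term becomes $\tfrac12 c\lambda MD^{-1}$, whose natural lower bound is $MD^{-1}\geq I$ rather than $M\geq\kappa I$, so your route yields the contraction factor $1-\rho c\lambda/4$; this suffices for a contraction but does not literally reproduce the stated constant $1-\rho c\lambda\kappa/4$ unless $\kappa\leq 2$.

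The genuine gap is at the step you yourself flag as the ``main obstacle'' and then do not resolve. Lemma~\ref{lemma_symmetrization} requires the second factor $B$ to be \emph{symmetric} nonnegative definite, but after right-multiplication by $D^{-1}$ the summands $E_kL_{\alpha(k)}D^{-1}$ and $\bigl(\widetilde{F}_{ij}-\widehat{F}_{ij}+\epsilon I\bigr)\bigl(\widetilde{D}_{ij,0}-\overline{D}_{ij,0}\bigr)D^{-1}$ have second factors that are products of two SPD matrices and hence are not symmetric in general; an $\epsilon$-perturbation of the diagonal first factor does nothing to repair this. The missing idea is a commutation argument: on the tensor-product grid, $M$, $\widetilde{D}_{ij,0}-\overline{D}_{ij,0}$, $\overline{D}_{ij,0}-\widehat{D}_{ij,0}$, and the frozen-coefficient matrices $L_{\alpha(k)}$ are all polynomials in the commuting family $\{D_1,\dots,D_d\}$ \emph{up to} the diagonal boundary corrections $B_j$, so their products with $D^{-1}$ are symmetric nonnegative definite modulo those corrections, which must then be tracked explicitly since diagonal matrices do not commute with difference matrices. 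Until that is done, the pairing with Lemma~\ref{lemma_symmetrization} does not apply to those summands and the estimate of $\|I-\rho LD^{-1}\|_2$ does not close.
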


\begin{theorem} \label{H2stable_thm}
Suppose the operator $F$ in \eqref{bvp} is proper and uniformly elliptic,  
and is Lipschitz continuous and differentiable with respect to its first two arguments 
with $\lambda > 0$.  
Suppose $\widehat{F}$ is consistent, is g-monotone, can be written in reduced form, 
is Lipschitz continuous and differentiable with respect to its first three arguments, 
and is elliptic compatible.  
Then the solution $U$ to the scheme \eqref{FD_method} 
for approximating problem \eqref{bvp} satisfies 
\[
	\Bigl( \prod_{i=1,2,\ldots,d} h_i^{\frac12} \Bigr) \left\| \delta_{x_i, 2 h_i}^2 U \right\|_{\ell^2(\cT_{\mathbf{h}} \cap \Omega)} 
	\leq C 
\]
for all $i=1,2,\ldots,d$, 
where $C$ is a positive $\mathbf{h}$-independent constant which depends on $\Ome$, the lower (proper) 
ellipticity constant $\lambda$, $F$, and $g$. 
\end{theorem}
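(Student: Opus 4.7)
The plan is to mirror the proof of Theorem~\ref{L2stable_thm}, but replace the $\ell^2$-contraction Lemma~\ref{lemma_contraction} with the discrete $H^2$-seminorm contraction Lemma~\ref{lemma_contraction2}. First I would introduce the same auxiliary harmonic function $v \in C^0(\overline{\Omega}) \cap H^2(\Omega)$ solving $-\Delta v = 0$ in $\Omega$ with $v = g$ on $\partial\Omega$, set $V_\alpha = v(\bx_\alpha)$ on $\cT_{\bh} \cap \overline{\Omega}$, and extend to the ghost layer so that $\Delta_{\bh} V_\alpha = 0$ on $\mathcal{S}_{\bh}$. Note that $V$ satisfies the auxiliary boundary condition \eqref{FD_auxiliaryBC} exactly (since $v$ is smooth on a slightly larger domain, or by the ghost extension), and by standard elliptic regularity $v \in C^\infty(\Omega)$ with derivatives bounded in terms of $g$.

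Next, observe that the exact solution $U$ to \eqref{FD_method} can be characterized as the unique fixed point of the map $\cM_{\rho,2}$ of Lemma~\ref{lemma_contraction2}: indeed $\cM_{\rho,2} U = U$ is equivalent to $\rho \hF[U_\alpha,\bx_\alpha] = 0$, which (since $\rho > 0$) is equivalent to \eqref{FD_method:1}, together with \eqref{FD_method:2} and \eqref{FD_auxiliaryBC}. Let $U^{(0)} \equiv V$ and iterate $U^{(k+1)} \equiv \cM_{\rho,2} U^{(k)}$. By Lemma~\ref{lemma_contraction2} the sequence is contractive in the seminorm $\| \delta_{x_i,2h_i}^2 \cdot \|_{\ell^2(\cT_{\bh} \cap \Omega)}$, and since all iterates share the same boundary values, this seminorm is actually a norm on the affine subspace of admissible grid functions. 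Telescoping the geometric series yields
\[
\| \delta_{x_i,2h_i}^2 (U - V) \|_{\ell^2(\cT_{\bh} \cap \Omega)}
\leq \frac{4}{c \lambda \kappa} \| \delta_{x_i,2h_i}^2 (V - \cM_{\rho,2} V) \|_{\ell^2(\cT_{\bh} \cap \Omega)}
= \frac{4}{c \lambda \kappa} \| \hF[V_\alpha,\bx_\alpha] \|_{\ell^2(\cT_{\bh} \cap \Omega)},
\]
where the last equality comes from the defining equation for $\cM_{\rho,2}$.

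Now applying the triangle inequality,
\[
\| \delta_{x_i,2h_i}^2 U \|_{\ell^2(\cT_{\bh} \cap \Omega)}
\leq \| \delta_{x_i,2h_i}^2 V \|_{\ell^2(\cT_{\bh} \cap \Omega)}
+ \frac{4}{c\lambda\kappa}\| \hF[V_\alpha,\bx_\alpha] \|_{\ell^2(\cT_{\bh} \cap \Omega)}.
\]
Scaling by $\prod_{i} h_i^{1/2}$ converts each $\ell^2$-norm into a discrete $L^2$-norm over $\Omega$. For the first term, the smoothness of $v$ gives $|\delta_{x_i,2h_i}^2 V_\alpha| \leq \|v\|_{C^2(\overline{\Omega})}$ pointwise, so the scaled term is bounded by $|\Omega|^{1/2} \|v\|_{C^2}$. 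For the second term, consistency and Lipschitz continuity of $\hF$ combined with the pointwise boundedness of $\widetilde{D}_{\bh}^2 V$, $\widehat{D}_{\bh}^2 V$, $\overline{D}_{\bh}^2 V$, and $V$ yield a pointwise bound $|\hF[V_\alpha,\bx_\alpha]| \leq C(v, F)$, so after scaling the second contribution is also uniformly bounded.

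The main obstacle will be the contractive mapping step: I must verify that Lemma~\ref{lemma_contraction2} applies even though the linearization of $\hF$ at $V$ (via the mean value theorem) produces coefficients depending on the iterate, and confirm that the seminorm $\| \delta_{x_i,2h_i}^2 \cdot \|$ is a genuine norm on the space of grid functions satisfying the Dirichlet and auxiliary boundary conditions (so the Banach-type argument converges to a unique fixed point). A secondary technical point is ensuring the ghost-point extension of $V$ makes $\hF[V_\alpha,\bx_\alpha]$ well-defined and uniformly bounded at near-boundary nodes; this relies on choosing $v$ smooth enough on an extended domain or invoking the auxiliary boundary constraint $\Delta_{\bh} V = 0$ so that the relationships of Lemma~\ref{hessians_lemma} apply cleanly.
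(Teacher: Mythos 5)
Your proposal is correct and follows essentially the same route the paper intends: the paper's proof of Theorem~\ref{L2stable_thm} compares $U$ with the nodal interpolant $V$ of the harmonic lift of $g$ and invokes the contraction lemma to get $\|U-V\| \leq \frac{4}{c\lambda\kappa + ck_0}\|\hF[V]\|$, and Theorem~\ref{H2stable_thm} is obtained by repeating this with Lemma~\ref{lemma_contraction2} in place of Lemma~\ref{lemma_contraction}; your telescoping of the fixed-point iteration for $\cM_{\rho,2}$ is exactly the explicit form of that step, with the correct cancellation of $\rho$ and the correct constant $\frac{4}{c\lambda\kappa}$. The two caveats you flag (that $\|\delta_{x_i,2h_i}^2\cdot\|_{\ell^2}$ with the built-in boundary conditions is a genuine norm, and that $v$ must be regular enough for $\hF[V]$ and $\delta_{x_i,2h_i}^2 V$ to be controlled) are precisely the points the paper also relies on without further elaboration, so no new gap is introduced.
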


Lastly, by \cite{FengLewis21}, combining Theorems~\ref{L2stable_thm} and \ref{H2stable_thm} 
using a novel numerical embedding technique 
yields the following $\ell^\infty$ stability result.  

\begin{theorem} \label{thm_stability} 
Under the assumptions of Lemma~\ref{lemma_contraction2}, the numerical 
solution $U$ is stable in the $\ell^\infty$-norm for $d\leq 3$; that is, $U$ satisfies
$\left\| U \right\|_{\ell^\infty(\cT_{\mathbf{h}})} 
	\leq C $
for $d\leq 3$, where $C$ is a positive constant independent of $\mathbf{h}$.  
\end{theorem}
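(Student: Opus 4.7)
The plan is to mimic the classical Sobolev embedding $H^2(\Omega)\hookrightarrow L^\infty(\Omega)$, valid for $d\leq 3$, at the discrete level. Theorems~\ref{L2stable_thm} and \ref{H2stable_thm} together furnish uniform bounds on the scaled discrete $L^2$-norms of $U$ and of each pure second-order central difference $\delta_{x_i,2h_i}^2 U$; the factor $\prod_{i} h_i^{1/2}$ is precisely what converts an $\ell^2(\cT_{\bh}\cap\Omega)$ norm into a quadrature approximation of $\|\cdot\|_{L^2(\Omega)}$. The strategy is therefore to construct a continuous extension $\widetilde{u}_{\bh}$ of the grid function $U$ that lies in $H^2(\Omega)$, whose $H^2$-norm is controlled by the discrete norms bounded in Theorems~\ref{L2stable_thm}--\ref{H2stable_thm}, and then to invoke the continuous Sobolev embedding to extract an $L^\infty$-bound.

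First I would build $\widetilde{u}_{\bh}$ as a tensor-product, piecewise $C^1$-smooth interpolant (for example a tensor-product cubic Hermite extension) whose nodal values agree with $U_\alpha$ and whose nodal first and second derivatives are prescribed by discrete central-difference surrogates of $U$. The Dirichlet data $g$ and the auxiliary condition \eqref{FD_auxiliaryBC} determine values and derivatives at boundary and ghost nodes, so the construction is well-defined. On a uniform Cartesian grid, $\|\widetilde{u}_{\bh}\|_{L^2(\Omega)}$ and the $L^2$-norms of its second-order partial derivatives decompose element-by-element, and a standard scaling argument on a reference rectangle shows that each such continuous norm is bounded, up to a mesh-independent constant, by a combination of the scaled discrete norms of $U$ and of $\delta_{x_i,2h_i}^2 U$. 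Applying Theorems~\ref{L2stable_thm} and \ref{H2stable_thm} then yields $\|\widetilde{u}_{\bh}\|_{H^2(\Omega)}\leq C$ independent of $\bh$, and the continuous Sobolev embedding delivers $\|\widetilde{u}_{\bh}\|_{L^\infty(\Omega)}\leq C$ for $d\leq 3$. Since $\widetilde{u}_{\bh}$ interpolates $U$ at the nodes, this yields $\|U\|_{\ell^\infty(\cT_{\bh})}\leq C$.

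The hard part is controlling the mixed second-order partial derivatives of $\widetilde{u}_{\bh}$, because Theorem~\ref{H2stable_thm} only provides bounds for the pure discrete derivatives $\delta_{x_i,2h_i}^2 U$. The remedy is to express the mixed derivatives of the Hermite interpolant as compositions of one-dimensional difference operators already under control, using the quantitative comparison between $\widehat{D}_{\bh}^2$ and $\widetilde{D}_{\bh}^2$ given by Lemma~\ref{hessians_lemma} to absorb the cross terms. This is exactly the ``numerical embedding'' technique introduced in \cite{FengLewis21}, so the task reduces to verifying that the construction there transfers, without essential modification, to the present more general g-monotone framework; the hypotheses of Lemma~\ref{lemma_contraction2} have been arranged precisely so that this transfer succeeds.
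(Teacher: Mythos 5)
Your proposal is correct and takes essentially the same route as the paper, whose proof of Theorem~\ref{thm_stability} consists precisely of combining Theorems~\ref{L2stable_thm} and \ref{H2stable_thm} via the discrete $H^2\hookrightarrow L^\infty$ (``numerical embedding'') argument of \cite{FengLewis21}, valid for $d\leq 3$. The one caveat is that the mixed second derivatives you worry about are not needed in that embedding: for $d\leq 3$ one has $\|v\|_{L^\infty}\lesssim \|v\|_{L^2}+\sum_{i}\|v_{x_ix_i}\|_{L^2}$, so only the pure second differences controlled by Theorem~\ref{H2stable_thm} enter, and Lemma~\ref{hessians_lemma} (an operator-ordering statement) is not the right tool for, nor required in, that step.
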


\section{Numerical experiments}\label{experiments_sec}

In this section we test the performance of the FD method based on the numerical operator 
$\hF_{\gamma,\sigma}$ defined by \eqref{Fgamma} for $\sigma \geq 0$ and $\gamma + \sigma \geq 0$.  
We consider test problems based on 
choosing $F$ to be 
linear and uniformly elliptic with non-divergence form, 
the Hamilton-Jacobi-Bellman operator, 
the Monge-Amp\`ere operator, 
and the operator coming from the equation of prescribed Gauss curvature.  
We will see that for the linear problem, $\hF_{0,0}$ yields a nonsingular sparse matrix.  
However, the nonlinear PDE problems considered are degenerate, 
and, consequently, the nonlinear solver `fsolve' in MATLAB has trouble finding a zero when 
using the zero function as the initial guess and $\sigma$ and $\gamma$ are small.  
For the Hamilton-Jacobi-Bellman problem, 
we can successfully solve the limiting cases by forming a sequence of 
approximations for decreasing values of $\gamma$ starting with an initial large 
value for $\gamma$ when initializing fsolve with the zero function.  
For the Monge-Amp\`ere operator and in the equation of prescribed Gauss curvature, 
we need to form the sequence of approximations with $\sigma$ large 
to more strictly impose g-monotonicity 
with respect to all of the components of the discrete Hessian operators.  
Once fsolve is in a neighborhood of the solution we can successfully find a zero 
for all $\sigma \geq 0$ and $\gamma \geq - \sigma$.  
For the Monge-Amp\'ere problem and the prescribed Gauss curvature problem, 
we choose an exact solution for which the Monge-Amp\`ere 
operator is locally uniformly elliptic.  
It is worth noting that fsolve does not find a false solution but instead always reported no solution found 
when staring with a poor initial guess.  
This is in direct contrast to the experiment in \cite{Feng_Glowinski_Neilan10} 
that found false solutions to the Monge-Amp\`ere problem when using the 
standard nine-point finite difference formula for approximating the discrete Hessian.  
 
Additional numerical tests for g-monotone FD methods can be found in 
\cite{Lewis_dissertation,FengKaoLewis13,FD_Long} 
and tests for the corresponding DG scheme can be found in 
\cite{Lewis_dissertation,FengLewis18}.  


\subsection{Test 1: Linear $F$ with non-aligned grids} 

In this test, we consider the linear uniformly elliptic problem $F[u] = -A : D^2 u - f$ 
for a discontinuous coefficient matrix $A$ and uniform grids chosen 
such that no monotone finite difference method exists 
(see \cite{MotzkinWasow53}).  
The matrix $A(\bx)$ will be uniformly symmetric positive definite, and $f$ will be a uniformly bounded 
function.  
We form a non-singular linear system $LU=F$ that is solved using MATLAB's backslash command. 
The matrix $L$ is formed using sparse storage. 

Let $\Omega = (-1,1)^2$.  
We form a sequence of uniform grids with 
$N_x = N_y = 10, 40, 80, 120, 180, 240, 300$.  
Let $h_x = h_y = \frac{2}{301}$ based on the finest mesh, 
define the directions $\bfv^{(i)} \in \mathbb{R}^2$ by 
\begin{align*}
    \bfv^{(1)} &= \left[ \begin{array}{cc} 1 \\ \frac{h_y}{2} \end{array}\right] , \qquad 
    \bfv^{(2)} = \left[ \begin{array}{cc} h_x \\ 5 h_y \end{array}\right] , \qquad
    \bfv^{(3)} = \left[ \begin{array}{cc} 10 h_x \\ h_y \end{array}\right] , \qquad
    \bfv^{(4)} = \left[ \begin{array}{cc} \frac{h_x}{2} \\ 2 \end{array}\right] , 
\end{align*}
and define the unit length vectors $\mathbf{q}_1^{(i)} = \frac{1}{\| \bfv^{(i)} \|_2} \bfv^{(i)}$ 
for all $i=1,2,3,4$.  
Note that, by construction, the directions $\bfv^{(1)}$ and $\bfv^{(4)}$ do not align with grid points in any 
of the meshes while $\bfv^{(2)}$ aligns with nodes at least 5 layers away 
and $\bfv^{(3)}$ aligns with nodes at least 10 layers away.  
For each $i=1,2,3,4$, choose unit length vectors $\mathbf{q}_2^{(i)}$ that are orthogonal to $\mathbf{q}_1^{(i)}$. 
Define the orthogonal matrices $Q_i = [ \mathbf{q}_1^{(i)} , \mathbf{q}_2^{(i)} ]$ 
for each $i=1,2,3,4$, 
and define the diagonal matrix $\Lambda(x,y) \in \mathbb{R}^{2 \times 2}$ by 
 \begin{align*}
        \Lambda(x,y) 
        &= \left[ \begin{array}{cc}
                2 - \sin{(e^{5x})}\cos{(e^{-3y})} & 0 \\ 
                0 & 2 - \mbox{sign}\big(\cos{(6\pi x)}\sin{(6\pi y)}\big) 
                 \end{array}\right] 
\end{align*} 
for all $(x,y) \in \Omega$. 
Then, $\Lambda_{11} \in [1,3]$ oscillates several times and 
$\Lambda_{22} \in \{2,3,4\}$ 
has several discontinuities over $\Omega$.  
Finally, we define $A$ by 
    \[
    A(x,y) = 
    \begin{cases} 
        Q_1\, \Lambda(x,y)\, Q_1^T , & \mbox{for } x\geq0 ,\enskip y\geq0, \\
        Q_2\, \Lambda(x,y)\, Q_2^T , & \mbox{for } x<0 ,\enskip y\geq0, \\
        Q_3\, \Lambda(x,y)\, Q_3^T , & \mbox{for } x<0 ,\enskip y<0, \\
        Q_4\, \Lambda(x,y)\, Q_4^T , & \mbox{otherwise } \\
    \end{cases}
    \]
so that $A$ is strictly symmetric positive definite and discontinuous.  
No monotone method exists for this problem on the specified grids due to the choices for $Q_1$ and $Q_4$, 
and $Q_2$ and $Q_3$ would lead to wider-stencils.  

We consider the problem $-A:D^2 u = f$ for two different solutions 
\[
	u^{(1)}(x,y) = \sin \left( \frac{\pi}{2} (x+y)^2 \right)
\]
and 
\[
	u^{(2)}(x,y) = \frac{x^3}{18} \left(3\log(x^2)-11 \right) 
		+ \left(y-\frac{1}{2} \right)^{\frac{8}{3}} \sqrt{\bigg|x+\frac{1}{5}\bigg|^5} 
\]
so that $u^{(1)} \in C^{\infty}(\Omega)$ 
and $u^{(2)} \in C^2(\Omega) \setminus C^3(\Omega)$.  
The source function $f$ and boundary data $g$ are chosen so that the solution is given by 
either $u^{(1)}$ or $u^{(2)}$.  
Using $\hF_{0,0}$, we observe optimal second order rates when approximating $u^{(1)}$ 
and expected deteriorated rates when approximating $u^{(2)}$ 
in Table~\ref{table_test1}.  
 
 \begin{table}[htb] 
{\small 
\begin{center}
\begin{tabular}{| c | c | c | c | c |}
		\hline 
	& \multicolumn{2}{c |}{$u^{(1)}$} & \multicolumn{2}{c |}{$u^{(2)}$} \\ 
		\hline
	 $h$ & $\ell^\infty$ Error & Order & $\ell^\infty$ Error & Order \\ 
		\hline
	1.82e-01 & 5.77e-02 &  & 5.21e-03 &  \\ 
		\hline
	4.88e-02 & 3.86e-03 & 2.06 & 8.29e-04 & 1.40 \\ 
		\hline
	2.47e-02 & 9.79e-04 & 2.01 & 3.26e-04 & 1.37 \\ 
		\hline
	1.65e-02 & 4.40e-04 & 1.99 & 1.86e-04 & 1.40 \\ 
		\hline
	1.10e-02 & 1.96e-04 & 2.00 & 1.05e-04 & 1.41 \\ 
		\hline
	8.30e-03 & 1.11e-04 & 2.00 & 7.02e-05 & 1.42 \\ 
		\hline
	6.64e-03 & 7.10e-05 & 2.00 & 5.11e-05 & 1.43 \\ 
		\hline
\end{tabular}
\end{center}
}
\caption{
Approximation results for Test 1 using $\hF_{0,0}$.   
}
\label{table_test1}
\end{table}


\subsection{Test 2: Hamilton-Jacobi-Bellman equations} 

This example is adapted from \cite{Smears_Suli}, 
and it was considered in \cite{FengLewis21} using the Lax-Friedrich's-like method.  
Let $\Lambda = [0,\pi/3] \times SO(2)$, where $SO(2)$ is the set of $2 \times 2$ rotation matrices 
and define $\sigma^{\theta}$ by 
\[
	\sigma^\theta \equiv R^T \left[ \begin{array}{cc} 1 & \sin(\phi) \\ 0 & \cos(\phi) \end{array} \right] , 
	\qquad \theta = (\phi, R) \in \Lambda . 
\]
Consider the Hamilton-Jacobi-Bellman equation 
\begin{align*} 
F[u] &\equiv \inf_{\theta \in \Lambda} \bigl( L_\theta u - f_\theta \bigr) = 0, \\
L_\theta u &= -A^{\theta}(\bx) : D^2 u + b^{\theta}(\bx) \cdot \nabla u + c^{\theta}(\bx) \, u 
\end{align*}
with $\Omega = (0,1)^2$, 
$A^\theta = \frac12 \sigma^\theta \left( \sigma^\theta \right)^T$, 
$\beta^\theta = \vec{0}$, $c^\theta = \pi^2$, 
$f_\theta = \sqrt{3} \sin^2(\phi/\pi^2) + g$
for $g$ chosen independent of $\theta$, 
and Dirichlet boundary data 
chosen such that the exact solution 
is given by $u(x,y) = e^{xy} \sin(\pi x) \sin( \pi y)$.  
The optimal controls vary significantly throughout the domain and the corresponding 
diffusion coefficient is not diagonally dominant in parts of $\Omega$.  
Furthermore, the coefficient matrix is degenerate for certain choices of $\theta$.  

We approximate $u$ using $\hF_{\gamma,0}$ for 
$\gamma = 1000, 10, 1, 0$ in Table~\ref{test2_fig}. 
The case $\gamma = 1000$ appears to be in a pre-asymptotic regime 
with the error dominated by the numerical moment term.  
Otherwise, as expected, we see that the methods are around second order accuracy 
with the approximations increasing in accuracy as $\gamma$ decreases. 
In the implementation of $\hF_{\gamma,\sigma}$, we do not calculate derivatives of $F$ 
to define $M_\alpha$.  
Instead, we define $\hF_{\gamma,\sigma}^\theta$ by 
\begin{align*}
	\hF_{\gamma,\sigma}^\theta [U_\alpha] & \equiv A^\theta(\bx_\alpha) : D_{\bh}^{2,\theta} U_\alpha + c^\theta(\bx_\alpha) U_\alpha - f_\theta(\bx_\alpha) \\ 
	& \qquad + \left( \gamma I_{d \times d} + \sigma 1_{d \times d} \right) : \left( \widetilde{D}_{\bh}^2 U_\alpha - \widehat{D}_{\bh}^2 U_\alpha \right)  
\end{align*}
for $D_{\bh}^{2,\theta} U_\alpha$ defined analogously to \eqref{Dhlin} for each value of $\alpha$ and $\theta$.  
We then solve the optimization problem 
$\inf_{\theta \in \Lambda} \hF_{\gamma,\sigma}^\theta [U_\alpha] = 0$.  

When choosing $\gamma=0$, the solver does not successfully find a solution 
even for coarse meshes.  
The solver also takes several iterations for $\gamma = 1$ 
sometimes struggling to find a solution for the finer meshes.    
Thus, we first find the solution corresponding to $\gamma = 1000$, 
and then we sequentially find the solutions corresponding to $\gamma = 100, 10, 1, 0$ 
by using the solution for the next largest value of $\gamma$ as an initial guess.  
This iterative technique allows fsolve to successfully find a solution for each 
value of $\gamma$ 
and often allows fsolve to significantly decrease the number of iterations needed to converge.  
We also can form a better initial guess by first solving the problem corresponding to a small 
finite number of controls $\theta$ before optimizing over $\Lambda$.  

\begin{table}[htb] 
{\small 
\begin{center}
\begin{tabular}{| c | c | c | c | c | c | c |}
		\hline
	& \multicolumn{2}{c |}{$\gamma = 1000$} & \multicolumn{2}{c |}{$\gamma = 100$} 
		& \multicolumn{2}{c |}{$\gamma = 10$} \\ 
		\hline
	 $h$ & $\ell^\infty$ Error & Order & $\ell^\infty$ Error & Order & $\ell^\infty$ Error & Order  \\ 
		\hline
	1.57e-01 & 1.30e+00 & & 1.17e+00 & & 6.08e-01 &  \\ 
		\hline
	9.43e-02 & 1.27e+00 & 0.05 & 9.83e-01 & 0.35 & 3.34e-01 & 1.18 \\ 
		\hline
	6.15e-02 & 1.22e+00 & 0.09  & 7.51e-01 & 0.63 & 1.73e-01 & 1.54  \\ 
		\hline
	4.56e-02 & 1.15e+00 & 0.20 & 5.67e-01 & 0.94 & 1.01e-01 & 1.80  \\ 
		\hline
	3.63e-02 & 1.08e+00 & 0.29 & 4.34e-01 & 1.16 & 6.47e-02 & 1.95  \\ 
		\hline
	2.89e-02 & 9.78e-01 & 0.43 & 3.19e-01 & 1.35 & 4.05e-02 & 2.05  \\ 
		\hline
	2.21e-02 & 8.32e-01 & 0.61 & 2.12e-01 & 1.53 & 2.30e-02 & 2.12  \\ 
		\hline
\end{tabular}

\vspace{0.1in}

\begin{tabular}{| c | c | c | c | c | c | c |}
		\hline
	& \multicolumn{2}{c |}{$\gamma = 1$} & \multicolumn{2}{c |}{$\gamma = 0$} \\ 
		\hline
	 $h$ & $\ell^\infty$ Error & Order & $\ell^\infty$ Error & Order  \\ 
		\hline
	1.57e-01 & 1.32e-01 & & 2.35e-02 &  \\ 
		\hline
	9.43e-02 & 5.08e-02 & 1.87 & 1.03e-02 & 1.61 \\ 
		\hline
	6.15e-02 & 2.15e-02 & 2.01 & 4.96e-03 & 1.71 \\ 
		\hline
	4.56e-02 & 1.17e-02 & 2.03 & 2.92e-03 & 1.77 \\ 
		\hline
	3.63e-02 & 7.43e-03 & 1.99 & 1.94e-03 & 1.79 \\ 
		\hline
	2.89e-02 & 4.74e-03 & 1.97 & 1.28e-03 & 1.81 \\ 
		\hline
	2.21e-02 & 2.81e-03 & 1.96 & 7.92e-04 & 1.80 \\ 
		\hline
\end{tabular}
\end{center}
}
\caption{
Approximation results for Test 2 using $\hF_{\gamma,0}$ with $\gamma = 1000, 100, 10, 1, 0$.   
}
\label{test2_fig}
\end{table}
 

\subsection{Test 3: Monge-Amp\`ere equation}

Consider the Monge-Amp\`ere problem 
\[
	F[u] = - \text{det}(D^2 u) + f = - u_{xx} u_{yy} + |u_{xy}|^2 + f = 0 
\]
over $\Omega = (0,1)^2$.  
The problem has a unique convex viscosity solution whenever $f \geq 0$.  
We choose the source term $f$ and boundary function $g$ such that the 
exact solution is given by 
$u(x,y) = e^{\frac{x^2+y^2}{2}}$.  
The matrix $M_\alpha$ is easily found since 
$\left| \frac{\partial F}{\partial P_{ij}} \right| = | P_{k\ell} |$
for $(k,\ell) = (2,2)$ if $(i,j)=(1,1)$, 
$(k,\ell) = (1,1)$ if $(i,j)=(2,2)$, 
and $(k,\ell) = (1,2)$ if $(i,j)=(1,2)$ or $(i,j) = (2,1)$.  

This problem is degenerate and the uniformity of the g-monotonicity of $\hF_{\gamma,\sigma}[U]$ 
strongly depends upon $U$ similarly to how the ellipticity of $F[u]$ strongly depends upon the convexity of $u$.  
Consequently, we now form initial guesses for fsolve by solving a sequence of problems 
based on a decreasing sequence of values for $\sigma$ starting with $\sigma$ large.  
As such, we are using a stronger form of the numerical moment to overcome the conditional ellipticity 
and potential degeneracy of the problem.  

Rates of convergence for the various tests can be found in Table~\ref{test3_fig} where we 
consider the method $\hF_{\gamma,0}$ for $\gamma = 1000, 100, 10, 1, 0$
and Table~\ref{test3b_fig} where we consider the method $\hF_{-\sigma,\sigma}$ 
for $\sigma = 1000, 100, 10, 1$.  
We observe optimal / near optimal rates of convergence as $\gamma$ decreases in Table~\ref{test3_fig} 
and as $\sigma$ decreases in Table~\ref{test3b_fig}. 
For $\gamma$ and $\sigma$ large the rates appear to be suboptimal but improving towards a rate of 2 as 
$h$ decreases.  
We also note that the method $\hF_{-\sigma,\sigma}$ appears more accurate than the analogous method 
$\hF_{\gamma,0}$ when $\gamma = \sigma$.  
Such a relationship is expected based on the sensitivity of the Monge-Amp\`ere problem to the 
auxiliary boundary condition. 
Indeed, setting $\Delta_{\bh} U_\alpha = 0$ along the boundary and observing that $g$ must be convex along the 
tangential direction implies $U$ must be concave along the normal direction 
which brings a qualitative error into the interior of the domain.  
We can see this directly in Figure~\ref{test3_plots} 
where we plot the approximation for varying $h$, $\gamma$, and $\sigma$ values.  
For $\gamma > 0$ on a coarse mesh we can see that the convexity of the approximation 
is incorrect near the boundary and that for $\gamma$ large this forces the curvature to be incorrect throughout 
the interior.  
Consequently, for $\gamma$ large, the method does not appear to enforce the convexity of the underlying 
viscosity solution.  
Instead, based on the solver's performance and based on the tests in \cite{Lewis_dissertation}, 
the numerical moment for $\gamma$ large appears to minimize the number of times the function changes 
convexity over the domain by penalizing discontinuities in the second derivative 
and steering the approximation towards the correct viscosity solution as $h \to 0$.  
We also note that, in contrast, choosing $\sigma$ large and minimizing the effect of the auxiliary boundary condition by setting $\gamma = - \sigma$ 
appears to completely eliminate the convexity issue.  
Another way to decrease the convexity issue when $\gamma > 0$ is to 
set only $\delta_{x_i, h_i}^2 U_\alpha = 0$ along the normal direction instead of setting 
$\Delta_{\bh} U_\alpha = 0$ or to choose a more appropriate positive value for the auxiliary boundary condition 
consistent with the convex nature of the viscosity solution.  

\begin{table}[htb] 
{\small 
\begin{center}
\begin{tabular}{| c | c | c | c | c | c | c |}
		\hline
	& \multicolumn{2}{c |}{$\gamma = 1000$} & \multicolumn{2}{c |}{$\gamma = 100$} 
		& \multicolumn{2}{c |}{$\gamma = 10$} \\ 
		\hline
	 $h$ & $\ell^\infty$ Error & Order & $\ell^\infty$ Error & Order & $\ell^\infty$ Error & Order  \\ 
		\hline
	2.83e-01 & 5.59e-01 &  & 5.49e-01 &  & 4.02e-01 &  \\ 
		\hline
	1.29e-01 & 5.86e-01 & -0.06 & 5.14e-01 & 0.08 & 9.65e-02 & 1.81 \\ 
		\hline
	6.15e-02 & 5.64e-01 & 0.05 & 2.22e-01 & 1.14 & 2.09e-02 & 2.08 \\ 
		\hline
	3.01e-02 & 4.22e-01 & 0.40 & 5.07e-02 & 2.06 & 5.23e-03 & 1.94 \\ 
		\hline
	1.99e-02 & 2.32e-01 & 1.46 & 2.18e-02 & 2.04 & 2.39e-03 & 1.90 \\ 
		\hline
	1.49e-02 & 1.31e-01 & 1.97 & 1.22e-02 & 2.00 & 1.38e-03 & 1.89 \\ 
		\hline
	1.19e-02 & 8.13e-02 & 2.11 & 7.95e-03 & 1.90 & 8.98e-04 & 1.91 \\ 
		\hline
\end{tabular}

\vspace{0.1in}

\begin{tabular}{| c | c | c | c | c | c | c |}
		\hline
	& \multicolumn{2}{c |}{$\gamma = 1$} & \multicolumn{2}{c |}{$\gamma = 0$} \\ 
		\hline
	 $h$ & $\ell^\infty$ Error & Order & $\ell^\infty$ Error & Order  \\ 
		\hline
	2.83e-01 & 4.19e-02 &  & 1.57e-02 &  \\ 
		\hline
	1.29e-01 & 9.30e-03 & 1.91 & 3.41e-03 & 1.94 \\ 
		\hline
	6.15e-02 & 2.31e-03 & 1.89 & 7.87e-04 & 1.99 \\ 
		\hline
	3.01e-02 & 5.87e-04 & 1.91 & 1.88e-04 & 2.01 \\ 
		\hline
	1.99e-02 & 2.64e-04 & 1.94 & 8.21e-05 & 2.01 \\ 
		\hline
	1.49e-02 & 1.50e-04 & 1.93 & 4.58e-05 & 2.00 \\ 
		\hline
	1.19e-02 & 9.70e-05 & 1.94 & 2.91e-05 & 2.00 \\ 
		\hline
\end{tabular}
\end{center}
}
\caption{
Approximation results for Test 3 using $\hF_{\gamma,0}$ with $\gamma = 1000, 100, 10, 1, 0$.   
}
\label{test3_fig}
\end{table}

\begin{table}[htb] 
{\footnotesize 
\begin{center}
\begin{tabular}{| c | c | c | c | c | c | c | c | c |}
		\hline
	& \multicolumn{2}{c |}{$\sigma = 1000$} & \multicolumn{2}{c |}{$\sigma = 100$} 
		& \multicolumn{2}{c |}{$\sigma = 10$} & \multicolumn{2}{c |}{$\sigma = 1$} \\ 
		\hline
	 $h$ & $\ell^\infty$ Error & Order & $\ell^\infty$ Error & Order & $\ell^\infty$ Error & Order & $\ell^\infty$ Error & Order  \\ 
		\hline
	2.83e-01 & 3.65e-02 &  & 3.36e-02 &  & 1.95e-02 &  & 1.47e-02 &  \\ 
		\hline
	1.29e-01 & 3.56e-02 & 0.03 & 2.49e-02 & 0.38 & 6.22e-03 & 1.45 & 3.05e-03 & 1.99 \\ 
		\hline
	6.15e-02 & 3.07e-02 & 0.20 & 1.19e-02 & 1.00 & 1.63e-03 & 1.81 & 7.00e-04 & 2.00 \\ 
		\hline
	3.01e-02 & 1.97e-02 & 0.62 & 3.83e-03 & 1.59 & 4.07e-04 & 1.95 & 1.67e-04 & 2.00 \\ 
		\hline
	1.99e-02 & 1.24e-02 & 1.13 & 1.79e-03 & 1.84 & 1.79e-04 & 1.98 & 7.31e-05 & 2.00 \\ 
		\hline
	1.49e-02 & 8.14e-03 & 1.44 & 1.02e-03 & 1.92 & 1.00e-04 & 1.99 & 4.08e-05 & 2.00 \\ 
		\hline
	1.19e-02 & 5.66e-03 & 1.62 & 6.59e-04 & 1.95 & 6.41e-05 & 1.99 & 2.60e-05 & 2.00 \\ 
		\hline
\end{tabular}
\end{center}
}
\caption{
Approximation results for Test 3 using $\hF_{\gamma,\sigma}$ 
for $\sigma = 1000$, $100$, $10$, $1$ and $\gamma = - \sigma$.   
}
\label{test3b_fig}
\end{table}

\begin{figure}
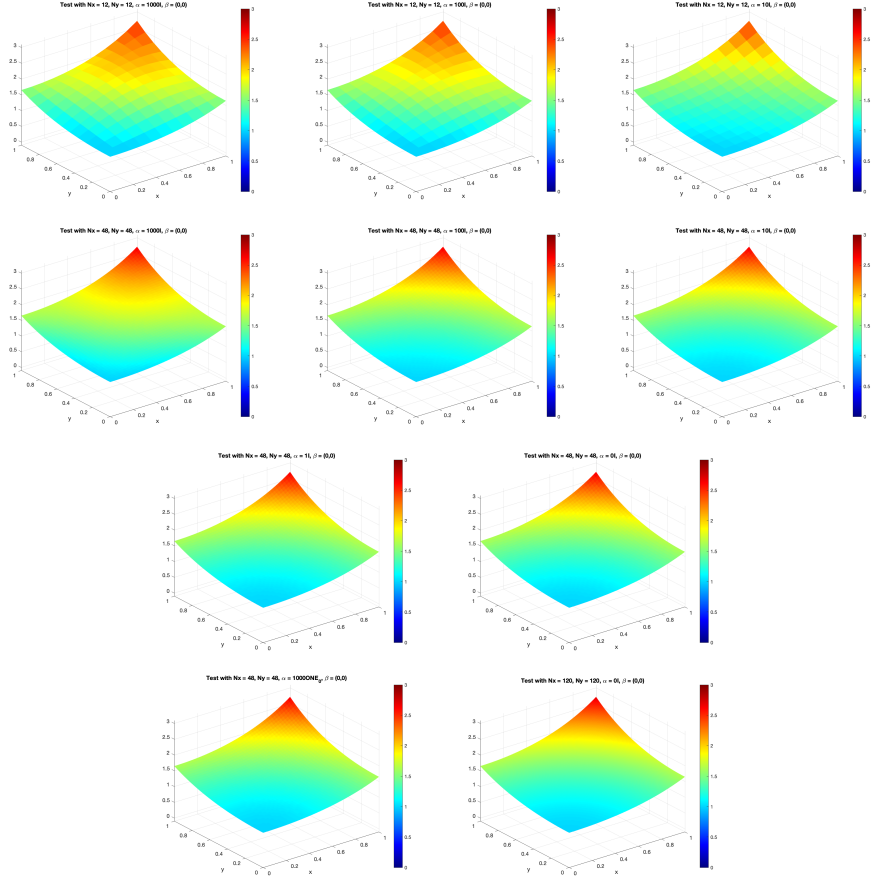

\centerline{
\includegraphics[scale=0.1]{pics/MA_plot_Nx_12_Ny_12_a_1000I_b_0_0.png}
\includegraphics[scale=0.1]{pics/MA_plot_Nx_12_Ny_12_a_100I_b_0_0.png}
\includegraphics[scale=0.1]{pics/MA_plot_Nx_12_Ny_12_a_10I_b_0_0.png}
}
\centerline{
\includegraphics[scale=0.1]{pics/MA_plot_Nx_48_Ny_48_a_1000I_b_0_0.png}
\includegraphics[scale=0.1]{pics/MA_plot_Nx_48_Ny_48_a_100I_b_0_0.png}
\includegraphics[scale=0.1]{pics/MA_plot_Nx_48_Ny_48_a_10I_b_0_0.png}
}
\centerline{
\includegraphics[scale=0.1]{pics/MA_plot_Nx_48_Ny_48_a_1I_b_0_0.png}
\includegraphics[scale=0.1]{pics/MA_plot_Nx_48_Ny_48_a_0I_b_0_0.png}
}
\centerline{
\includegraphics[scale=0.1]{pics/MA_plot_Nx_48_Ny_48_a_1000I0_b_0_0.png}
\includegraphics[scale=0.1]{pics/MA_plot_Nx_120_Ny_120_a_0I_b_0_0}
}
\caption{
Plots of various approximations for Test 3 using $\hF_{\gamma,\sigma}$.  
The first row corresponds to $h=1.29$e-01 with $\sigma = 0$ and $\gamma = 1000, 100, 10$ 
from left to right.  
The second row corresponds to $h=3.01$e-02 with $\sigma = 0$ and $\gamma = 1000, 100, 10$ 
from left to right.  
The third row corresponds to $h=3.01$e-02 with $\sigma = 0$ and $\gamma = 1, 0$ 
from left to right.  
The first plot on the fourth row corresponds to $h=3.01$e-02 with $\sigma = 1000$ and $\gamma = -1000$.   
The second plot on the fourth row corresponds to $h=1.19$e-02 with $\gamma = \sigma = 0$.    
}
\label{test3_plots}
\end{figure}


\subsection{Test 4: Prescribed Gauss curvature equations}  

This example is adapted from \cite{FengNeilan14}. 
Let $K > 0$.  
The equation of prescribed Gauss curvature corresponds to the choice 
\[
	F[u] 
	= - \frac{\text{det}(D^2 u)}{\left(1 + |\nabla u|^2 \right)^{\frac{d+2}{2}}} 
	+ K f = 0 .  
\]
The problem is based on the Monge-Amp\`ere operator, 
and it is known that the problem when $f=1$ has a unique convex viscosity solution 
for each $K \in [0,K^*)$ for some positive constant $K^*$.  
We let $K=0.1$ and $\Omega = (0,1)^2$, and we choose $f$ and $g$ such that the exact solution is given by 
$u(x,y) = e^{\frac{x^2+y^2}{2}}$.  
The matrix $M_\alpha$ is easily found since 
$\left| \frac{\partial F}{\partial P_{ij}} \right| = | P_{k\ell} | / \left(1 + | \mathbf{v}|^2 \right)^2$
for $(k,\ell) = (2,2)$ if $(i,j)=(1,1)$, 
$(k,\ell) = (1,1)$ if $(i,j)=(2,2)$, 
and $(k,\ell) = (1,2)$ if $(i,j)=(1,2)$ or $(i,j) = (2,1)$.  

Rates of convergence for the various tests can be found in Table~\ref{test4_fig} where we 
consider the method $\hF_{\gamma,0}$ for $\gamma = 1000, 100, 10, 1, 0$
and Table~\ref{test4b_fig} where we consider the method $\hF_{-\sigma,\sigma}$ 
for $\sigma = 1000, 100, 10, 1$.  
Overall we observe similar behavior as Test 3 but less accuracy and lower rates 
when $\gamma > 0$ or $\sigma > 0$.  
The method $\hF_{0,0}$ does exhibit an optimal convergence rate of 2 
and is the most accurate of all of the methods tested.  
Since this problem involves the gradient operator, 
any monotone method that directly approximates the gradient 
would in general be limited to only first order accuracy 
until the mesh is fine enough to use the local uniform ellipticity 
to enforce the monotonicity with respect to $\nabla_{\bh}^\pm U_\alpha$.

\begin{table}[htb] 
{\small 
\begin{center}
\begin{tabular}{| c | c | c | c | c | c | c |}
		\hline
	& \multicolumn{2}{c |}{$\gamma = 1000$} & \multicolumn{2}{c |}{$\gamma = 100$} 
		& \multicolumn{2}{c |}{$\gamma = 10$} \\ 
		\hline
	 $h$ & $\ell^\infty$ Error & Order & $\ell^\infty$ Error & Order & $\ell^\infty$ Error & Order  \\ 
		\hline
	2.83e-01 & 1.30e+00 & & 5.57e-01 &  & 5.33e-01 &  \\ 
		\hline
	1.29e-01 & 1.27e+00 & 0.05 & 5.80e-01 & -0.05 & 4.05e-01 & 0.35 \\ 
		\hline
	6.15e-02 & 1.22e+00 & 0.09  & 5.24e-01 & 0.14 & 1.67e-01 & 1.20 \\ 
		\hline
	3.01e-02 & 1.15e+00 & 0.20 & 2.79e-01 & 0.88 & 8.64e-02 & 0.93 \\ 
		\hline
	1.99e-02 & 1.08e+00 & 0.29 & 1.72e-01 & 1.17 & 5.76e-02 & 0.98 \\ 
		\hline
	1.49e-02 & 9.78e-01 & 0.43 & 1.31e-01 & 0.96 & 4.24e-02 & 1.06 \\ 
		\hline
	1.19e-02 & 8.32e-01 & 0.61 & 1.06e-01 & 0.91 & 3.29e-02 & 1.12 \\ 
		\hline
\end{tabular}

\vspace{0.1in}

\begin{tabular}{| c | c | c | c | c | c | c |}
		\hline
	& \multicolumn{2}{c |}{$\gamma = 1$} & \multicolumn{2}{c |}{$\gamma = 0$} \\ 
		\hline
	 $h$ & $\ell^\infty$ Error & Order & $\ell^\infty$ Error & Order  \\ 
		\hline
	2.83e-01 & 2.56e-01 &  & 2.19e-02 &  \\ 
		\hline
	1.29e-01 & 1.08e-01 & 1.10 & 3.89e-03 & 2.19 \\ 
		\hline
	6.15e-02 & 5.52e-02 & 0.91 & 8.20e-04 & 2.11 \\ 
		\hline
	3.01e-02 & 2.51e-02 & 1.10 & 1.90e-04 & 2.05 \\ 
		\hline
	1.99e-02 & 1.49e-02 & 1.26 & 8.25e-05 & 2.02 \\ 
		\hline
	1.49e-02 & 1.01e-02 & 1.35 & 4.59e-05 & 2.01 \\ 
		\hline
	1.19e-02 & 7.30e-03 & 1.43 & 2.92e-05 & 2.01 \\ 
		\hline
\end{tabular}
\end{center}
}
\caption{
Approximation results for Test 4 using $\hF_{\gamma,0}$ with $\gamma = 1000, 100, 10, 1, 0$.   
}
\label{test4_fig}
\end{table}

\begin{table}[htb] 
{\footnotesize
\begin{center}
\begin{tabular}{| c | c | c | c | c | c | c | c | c |}
		\hline
	& \multicolumn{2}{c |}{$\sigma = 1000$} & \multicolumn{2}{c |}{$\sigma = 100$} 
		& \multicolumn{2}{c |}{$\sigma = 10$} & \multicolumn{2}{c |}{$\sigma = 1$} \\ 
		\hline
	 $h$ & $\ell^\infty$ Error & Order & $\ell^\infty$ Error & Order & $\ell^\infty$ Error & Order & $\ell^\infty$ Error & Order  \\ 
		\hline
	2.83e-01 & 3.68e-02 & & 3.63e-02 &  & 3.20e-02 &  & 1.81e-02 &  \\ 
		\hline
	1.29e-01 & 3.70e-02 & -0.01 & 3.40e-02 & 0.08 & 2.11e-02 & 0.53 & 6.83e-03 & 1.24 \\ 
		\hline
	6.15e-02 & 3.59e-02 & 0.04 & 2.70e-02 & 0.31 & 1.02e-02 & 0.98 & 2.09e-03 & 1.60 \\ 
		\hline
	3.01e-02 & 3.19e-02 & 0.17 & 1.61e-02 & 0.72 & 3.84e-03 & 1.38 & 5.75e-04 & 1.81 \\ 
		\hline
	1.99e-02 & 2.73e-02 & 0.38 & 1.04e-02 & 1.06 & 1.96e-03 & 1.62 & 2.61e-04 & 1.92 \\ 
		\hline
	1.49e-02 & 2.31e-02 & 0.57 & 7.22e-03 & 1.26 & 1.18e-03 & 1.74 & 1.47e-04 & 1.96 \\ 
		\hline
	1.19e-02 & 1.96e-02 & 0.73 & 5.28e-03 & 1.39 & 7.88e-04 & 1.81 & 9.45e-05 & 1.97 \\ 
		\hline
\end{tabular}
\end{center}
}
\caption{
Approximation results for Test 4 using $\hF_{\gamma,\sigma}$ 
for $\sigma = 1000$, $100$, $10$, $1$ and $\gamma = - \sigma$.   
}
\label{test4b_fig}
\end{table}





\end{document}